\pgfplotsset{compat=newest}
\newcounter{desccount}
\newcommand{\descitem}[1]{%
  \item[#1] \refstepcounter{desccount}\label{#1}
}
\newcommand{\descref}[1]{\hyperref[#1]{#1}}
\numberwithin{equation}{section}
\newcommand{\sub}{\subseteq}
\newcommand{\R}{\mathbb{R}}
\newcommand{\C}{\mathbb{C}}
\newcommand{\bs}{\backslash}
\newcommand{\Gd}{\delta}
\newcommand{\Ge}{\varepsilon}
\newcommand{\Gl}{\lambda}
\newcommand{\Gs}{\sigma}
\numberwithin{chap}{section}
\newtheorem{thm}{Theorem}
\numberwithin{thm}{section}
\newtheorem{conj}{Conjecture}
\numberwithin{conj}{section}
\newtheorem{prop}[thm]{Proposition}
\newtheorem{defn}[thm]{Definition}
\newtheorem{lem}[thm]{Lemma}
\DeclarePairedDelimiter{\norm}{\lVert}{\rVert}
\DeclarePairedDelimiter{\abs}{\lvert}{\rvert}
\DeclarePairedDelimiter{\ip}{\langle}{\rangle}
\let\oldabs\abs
\def\abs{\@ifstar{\oldabs}{\oldabs*}}
\let\oldnorm\norm
\def\norm{\@ifstar{\oldnorm}{\oldnorm*}}
\let\oldip\ip
\def\ip{\@ifstar{\oldip}{\oldip*}}
\begin{document}

\pagestyle{myheadings} \thispagestyle{empty} \markright{}
\title{Decoupling for mixed-homogeneous polynomials in $\mathbb R^3$}

\author{Jianhui Li and Tongou Yang}
\address[Jianhui Li]{Department of Mathematics, University of Wisconsin-Madison, Van Vleck Hall, 480 Lincoln Drive, Madison, WI 53706-1325}
\email{jli2266@wisc.edu}

\address[Tongou Yang]{Department of Mathematics, University of British Columbia, Vancouver, B.C. Canada V6T 1Z2}
\email{toyang@math.ubc.ca}

\subjclass[2020]{42B99, 53A05}

\date{}
\maketitle

\begin{abstract}
     We prove decoupling inequalities for mixed-homogeneous bivariate polynomials, which partially answers a conjecture of Bourgain, Demeter and Kemp.  
\end{abstract}


\section{Introduction}

\subsection{Background}
Bourgain and Demeter's 2015 breakthrough $l^2$-decoupling theorem for the truncated elliptic paraboloid in $\R^n$ \cite{BD2015} is of substantial importance in harmonic analysis, and has been generalised in various directions since then. In \cite{BGLSX}, the authors studied $l^2$-decoupling inequalities in $\R^2$ for general real analytic phase functions over a compact interval. Following this, Demeter \cite{Demeter2020} improved upon \cite{BGLSX} by choosing partitions of the unit interval that fit the curvature of each analytic phase function. Later, the second author \cite{Yang2} further proved a uniform decoupling inequality for all polynomial phase functions with a given bound on the degree of the polynomials. By a standard Taylor polynomial approximation, this can be shown to imply decoupling for every single smooth function on a compact interval, generalising Demeter's result.

However, in higher dimensions, the problem becomes much harder. Even in $\R^3$, the decoupling with respect to an arbitrary real-analytic phase function over a compact set remains unknown:

\begin{conj}[Bourgain, Demeter and Kemp, \cite{BDK2019}]\label{conj_BDK2019}
Let $\phi:(-2,2)^2\to \R$ be a real analytic function. Then for every $\Ge>0$, there is a constant $C_\Ge$, depending on $\phi$ and $\Ge$ only, such that the following is true. For every $0<\Gd<1$, there is a boundedly overlapping family $\mathcal P=\mathcal P_\Gd$ of rectangles covering $[-1,1]^2$, such that $\phi$ is $\Gd$-flat\footnote{For a formal definition, see the appendix.} over each $P\in \mathcal P$, and for any function $f:\R^3\to \C$ with Fourier support on the $\Gd$-neighbourhood of the graph of $\phi$ above $[-1,1]^2$, we have the $l^4$-decoupling inequality:
\begin{equation}\label{eqn_decoupling_l4_conj}
    \norm {f}_{L^4(\R^3)}\leq C_\Ge\Gd^{-\Ge}\# \mathcal P^{\frac 1 4}\left(\sum_{P\in \mathcal P}\norm{f_P}_{L^4(\R^3)}^4\right)^{\frac 1 4}.
\end{equation}
\end{conj}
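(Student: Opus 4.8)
The plan is to transplant the one-dimensional strategy of \cite{Yang2} to $\R^3$: reduce the analytic statement to a \emph{uniform} decoupling theorem for bivariate polynomials of bounded degree, and prove the latter by an inductive decomposition of $[-1,1]^2$ adapted to the Newton polygon of $\phi$. First I would record the standard reductions. By a compactness argument together with a Taylor-polynomial approximation at scale $\Gd$ (choosing the degree $d=d(\Gd)$ to grow like $\log(1/\Gd)$, so that the tail is $\ll\Gd$ and may be absorbed into the $\Gd$-neighbourhood), it suffices to establish \eqref{eqn_decoupling_l4_conj} for every polynomial $\phi$ of degree at most $d$, with $C_\Ge$ and the family $\mathcal P$ depending on $d$, hence ultimately on $\phi$. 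By a partition of unity and an affine change of variables one may further assume $\phi(0)=0$ and $\nabla\phi(0)=0$ and work in an arbitrarily small neighbourhood of the origin; affine terms in $\phi$ and affine changes of the base variables preserve both the $L^4$ norm and the decoupling constant, so these normalisations are free.

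Second is the core mechanism. Fix a dyadic scale and decompose $[-1,1]^2$ into a boundedly overlapping family of rectangles $R=[a_1,a_1+\ell_1]\times[a_2,a_2+\ell_2]$ with the property that, after the affine map sending $R$ to $[-1,1]^2$ and a rescaling of the vertical variable, the transformed phase splits as $\phi_R=\psi_R+E_R$, where $\psi_R$ is — modulo affine terms — a mixed-homogeneous polynomial, namely the ``dominant part'' read off from the vertex or edge of the Newton polygon that is active on $R$, and $E_R$ is a perturbation small in $C^3([-1,1]^2)$. On rectangles where $\psi_R$ is already flat at the rescaled scale one uses the trivial flat decoupling; on the remaining ones one applies the decoupling inequality for mixed-homogeneous bivariate polynomials proved in this paper to $\psi_R$, combined with a perturbation lemma stating that a $C^3$-small change of phase alters the decoupling constant by at most an absolute factor, and then undoes the rescaling. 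Iterating and summing over the family — using the bounded overlap, the invariance of the constant under affine rescaling, and the fact that only $O_\Ge(1)$ (or $O(\log(1/\Gd))$) iterations are needed — the losses telescope to the admissible $\Gd^{-\Ge}$.

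The decisive obstacle, and the reason that only a partial answer is currently attainable, lies in the second step: for a \emph{general} bivariate polynomial there is no scale-invariant decomposition on which the dominant part $\psi_R$ is always mixed-homogeneous. Generic degenerations of the Hessian produce local normal forms such as $x^2+y^3$, $x^2y+y^3$, or $x^3-xy^2$, and curved edges of the Newton polygon force two ``incomparable'' homogeneity regimes to interact; for these pieces the required decoupling inequality is not known, and would in principle have to be built from decoupling for the parabola-times-line, the light cone, and the relevant cuspidal graphs, assembled through a Bourgain--Guth broad/narrow induction that is delicate precisely because $L^4$ is the critical exponent in $\R^3$. One must also control the genuinely lower-dimensional contributions — rectangles on which the graph is nearly planar or nearly ruled. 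The mixed-homogeneous hypothesis is exactly the class for which $\psi_R$ is forced to be mixed-homogeneous on every $R$, so the scheme above closes and yields the partial result announced in the abstract; extending it to an arbitrary analytic $\phi$ requires, beyond a resolution-of-singularities bookkeeping for the Newton polygon, a full decoupling toolkit for each of the normal forms that arise.
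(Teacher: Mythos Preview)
This statement is Conjecture~\ref{conj_BDK2019}, which the paper does \emph{not} prove; it is explicitly presented as an open conjecture, and the paper's main result (Theorem~\ref{thm_decoupling}) establishes only the special case of mixed-homogeneous polynomials. There is thus no proof in the paper to compare your attempt against, and your proposal is not a proof either --- as you yourself concede in the final paragraph, the reduction to a mixed-homogeneous dominant part $\psi_R$ on each rectangle fails for a general bivariate polynomial, because the local normal forms near a degenerate Hessian (you list $x^2+y^3$, $x^2y+y^3$, $x^3-xy^2$) need not be mixed-homogeneous. What you have written is a roadmap with its obstructions named, not a proof of the conjecture.

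There is also a gap in your first reduction that you should be aware of. You let the Taylor degree $d=d(\Gd)\sim\log(1/\Gd)$ and then assert that the polynomial decoupling constant ``depends on $d$, hence ultimately on $\phi$.'' But $d$ depends on $\Gd$, not merely on $\phi$, so a $d$-dependent constant is a priori $\Gd$-dependent --- precisely what the conjecture forbids. For this step to work one would need the uniform polynomial decoupling constant $C_{d,\Ge}$ to grow at most like $e^{c\Ge d}$ in $d$, so that $C_{d,\Ge}\le C_\Ge\Gd^{-\Ge}$ when $d\sim\log(1/\Gd)$. That quantitative control is a nontrivial additional requirement (it is what makes the one-variable argument in \cite{Yang2} go through), and in two variables it is, by your own account of the obstructions, currently out of reach.
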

Here and throughout this paper, $f_P$ stands for the Fourier restriction of $f$ to the strip $P\times \R$, namely, $f_P$ is defined by the relation
$$
\widehat {f_P}(x,y,z)=\hat f(x,y,z)1_P(x,y).
$$
In \cite{BDK2019} the authors partially answered the conjecture when $\phi$ is given by a real analytic surface of revolution. Later Kemp \cite{Kemp} partially proved the conjecture by establishing a decoupling inequality for all surfaces with identically zero Gaussian curvature but no umbilical points. While this article was under review, we were informed that he also proved in \cite{Kemp2} decoupling inequalities for a broad class of $C^5$ surfaces in $\R^3$ lacking planar points.

Degenerate phases are also of interest in other related problems in harmonic analysis, including the restriction problem and the $L^p$ improving estimates for averages. For example, the reader may refer to \cites{M2014,IM2016,DZ2019,IM2011, PS1997,Oberlin2012,CKZ2013,SS2019}.

\subsection{Main result}
In this paper, we seek to partially answer the conjecture for a model class of polynomials, namely, real-valued mixed-homogeneous polynomials. A polynomial $\phi$ is said to be mixed-homogeneous if for some positive integers $q,r,s$ with $\mathrm{gcd}(r,s)=1$ we have
\begin{equation}\label{eqn_mixed_homo_def0}
   \phi(x,y) = \rho^{-q} \phi(\rho^r x, \rho^s y),\quad \text{for all $(x,y)\in \R^2$ and all $\rho>0$}. 
\end{equation}

The properties of mixed-homogeneous polynomials that will be used in this paper are listed in the appendix.

Mixed-homogeneous polynomials have played an important role in the development of a general theory for general polynomial phases, for instance \cites{DZ2019,IM2011,IM2016, PS1997}.


Now we are ready to state the main theorem of the paper.
\begin{thm}[Main theorem]\label{thm_decoupling}
Let $\phi:\R^2\to \R$ be a mixed-homogeneous polynomial. Then for every $\Ge>0$, there is a constant $C_\Ge=C_{\Ge,\phi}>0$ such that the following holds. For every $0<\Gd<1$, there is a boundedly overlapping family $\mathcal P=\mathcal P_\Gd$ of rectangles covering $[-1,1]^2$, such that $\phi$ is $\Gd$-flat over each $P\in \mathcal P$, and for any function $f:\R^3\to \C$ with Fourier support on the $\Gd$-neighbourhood of the graph of $\phi$ above $[-1,1]^2$, we have the $l^4(L^4)$-decoupling inequality:
\begin{equation}\label{eqn_decoupling_l4}
    \norm {f}_{L^4(\R^3)}\leq C_\Ge\Gd^{-\Ge}\# \mathcal P^{\frac 1 4}\left(\sum_{P\in \mathcal P}\norm{f_P}_{L^4(\R^3)}^4\right)^{\frac 1 4}.
\end{equation}
If, in addition, the Hessian of $\phi$ is positive-semidefinite on $[-1,1]^2$, i.e. $\phi$ is convex, then \eqref{eqn_decoupling_l4} can be strengthened to the $l^2(L^4)$-decoupling inequality:
\begin{equation}\label{eqn_decoupling_l2}
    \norm {f}_{L^4(\R^3)}\leq C_\Ge\Gd^{-\Ge}\left(\sum_{P\in \mathcal P}\norm{f_P}_{L^4(\R^3)}^2\right)^{\frac 1 2}.
\end{equation}
\end{thm}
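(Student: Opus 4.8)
The plan is to reduce the decoupling for a general mixed-homogeneous polynomial $\phi$ to decoupling for a small number of model pieces by exploiting the scaling structure \eqref{eqn_mixed_homo_def0}. First I would use the mixed-homogeneity to decompose $[-1,1]^2$ (minus a neighbourhood of the axes, which will require separate attention) into dyadic "rectangular annuli" of the form $\{2^{-(k+1)r}\le |x| \le 2^{-kr},\ 2^{-(k+1)s}\le |y|\le 2^{-ks}\}$, together with their reflections. On each such annulus, the anisotropic rescaling $(x,y)\mapsto (2^{-kr}x, 2^{-ks}y)$ maps the piece of the graph of $\phi$ back, up to an affine map in the vertical direction and a relabeling $\Gd \mapsto 2^{kq}\Gd$, to a fixed compact piece of the graph of $\phi$ away from the axes. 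Since decoupling constants are invariant under affine transformations and the number of annuli is $O(\log(1/\Gd))$, it suffices to prove the decoupling (with an $\Ge$-loss) on a single such fixed compact piece $K$ bounded away from the coordinate axes, and then iterate/compose the annular decouplings using the triangle inequality and Minkowski (this is where the $l^4(L^4)$ versus $l^2(L^4)$ distinction enters, and where I would need to be careful about how the separate pieces combine — flat/degenerate directions along the axes may contribute lower-dimensional or cylindrical pieces handled by Fubini and one-dimensional decoupling as in \cite{Yang2}).

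On the fixed compact piece $K$ bounded away from the axes, the strategy is to classify the local geometry of the graph according to the Hessian determinant $H(x,y)=\det D^2\phi$. Where $H\ne 0$, the surface is either elliptic or hyperbolic and one applies the known decoupling theorems for the paraboloid/hyperbolic paraboloid (Bourgain–Demeter \cite{BD2015} and its hyperbolic analogue) after a suitable affine normalization, using a standard Pramanik–Seeger / bootstrapping argument to patch together caps on which the second fundamental form is comparable to a constant. Where $H=0$ — which, because $\phi$ is a mixed-homogeneous polynomial, occurs only on a finite union of curves that are themselves mixed-homogeneous (in fact unions of monomial curves $y=cx^m$ after factoring) — one zooms in toward these curves, again using an (anisotropic) rescaling adapted to the curve; near such a curve the surface degenerates to (a perturbation of) a cylinder over a plane curve, and one decouples first into slabs parallel to the degenerate direction using cylindrical/Fubini + one-variable decoupling, and then handles each slab by the nondegenerate theory. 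The key bookkeeping device throughout is that all the relevant varieties (the axes, the zero set of $H$, the zero sets of lower-order terms) are mixed-homogeneous, so the induction is on a finite, scale-invariant stratification rather than an $\Ge$-removal over arbitrary analytic sets.

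The main obstacle I expect is the uniform treatment of the \emph{transition regions} — the scales at which a cap near the flat curve $\{H=0\}$ is neither "fully cylindrical" nor "fully curved" — and proving that the total decoupling constant accumulated over the $O(\log(1/\Gd))$ many anisotropic rescalings is only $\Gd^{-\Ge}$ rather than a fixed power of $\Gd$. This requires an induction-on-scales argument in which the gain from one rescaling step is genuinely multiplicative and the $\Ge$-losses sum geometrically; one must check that the rectangles produced at each stage are honestly rectangles (not arbitrary convex sets) so that the flatness hypothesis in the statement is met and so that the rescaled problems are again instances covered by the induction hypothesis. The convex case giving $l^2(L^4)$ is then the sub-case in which every nondegenerate cap is elliptic (Hessian positive semidefinite forces this), so that one only ever invokes the $l^2$ Bourgain–Demeter theorem and the flat-piece decouplings, all of which are $l^2$, and the composition of $l^2(L^4)$ decouplings remains $l^2(L^4)$; the non-convex case must fall back to $l^4(L^4)$ because the hyperbolic decoupling is only available with $l^4$ on the right-hand side (or, more precisely, because combining $l^2$ with the hyperbolic $l^4$ forces $l^4$).
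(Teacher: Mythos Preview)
Your overall architecture matches the paper's: reduce via the mixed-homogeneous scaling to a fixed annulus (the paper uses $L$-shaped annuli $A_j=[-(2^j\Gd)^{r/q},(2^j\Gd)^{r/q}]\times[-(2^j\Gd)^{s/q},(2^j\Gd)^{s/q}]$ minus the previous one, not the product annuli you describe, but this is a minor reorganisation), apply Bourgain--Demeter where $\det D^2\phi\ne 0$, and near the zero set of the Hessian determinant localise to neighbourhoods of the curves $x^s=\lambda_j y^r$ and the axes. Your identification of the convex/$l^2$ case is also exactly what the paper does in Section~\ref{sec_convex}.

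The gap is in your treatment of the neighbourhood of $\{\det D^2\phi=0\}$. You write that near such a curve one ``decouples first into slabs parallel to the degenerate direction using cylindrical/Fubini + one-variable decoupling, and then handles each slab by the nondegenerate theory.'' This is precisely the naive three-step procedure the paper describes in its introduction and then explains \emph{fails} as stated: on the dyadic layer $R(\sigma)=\{|\det D^2\phi|\sim\sigma\}$, the eigenvectors of $D^2\phi$ (hence the orientations of the would-be $\Gd$-flat rectangles) rotate by an amount $\gg\sigma^{1/2}$ as one moves along the strip, so there is no single ``degenerate direction'' into which one can cylindrically decouple $R(\sigma)$ all at once. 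The paper's resolution is an \emph{iterative} cylindrical decoupling (Lemma~\ref{lem_cylinderial_dec}): one first decouples $R(\sigma)$ only down to intermediate axis-parallel rectangles of width $\sigma^{1/2}$; on each of these one applies a carefully chosen shear (Lemma~\ref{lem_shear}, which uses $|\partial_x\det D^2\phi|\lesssim\sigma^l$ to control the drift of the eigenvectors) so that the transformed function again lies in a family $\mathcal A_{l-1}$ with the eigenvector rotation suppressed by one power of $\sigma$; then one iterates. The number of iterations is the multiplicity of the factor $x^s-\lambda_j y^r$ in $\det D^2\phi$. Your proposal correctly flags the transition regions as the main obstacle and mentions induction on scales, but does not contain this mechanism, and without it the ``slabs parallel to the degenerate direction'' step simply does not produce rectangles on which the rescaled Hessian is uniformly nondegenerate.
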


{\it Remark.} In this theorem and Theorem \ref{thm_Bourgain_Demeter} below, there is a ``first principles" decoupling
in that $l^2(L^4)$ estimates are stronger than the corresponding $l^4(L^4)$ estimates, which follows from a simple application of the H\"older's inequality. Also, if in some cases the sum on the right hand side has at most $O(\log\Gd^{-1})$ terms, a trivial application of the triangle inequality followed by H\"older's inequality gives rise to a factor of at most $(\log\Gd^{-1})^C$, which is majorised by $C_\Ge\Gd^{-\Ge}$ for any $\Ge>0$. See the argument right after \eqref{eqn_radially_dyadic} for an example.

This principle will be used extensively in this paper, for instance, the dyadic decompositions at the start of Sections \ref{sec_reduction} and \ref{sec_proof_lemma}.

\subsection{Main ingredients of proof}
The following results of Bourgain and Demeter will serve as the cornerstone of our argument.
\begin{thm}[Bourgain-Demeter, \cites{BD2015,BD2017}\label{thm_Bourgain_Demeter}]
Let $\phi:[-1,1]^2$ be a $C^3$-function with nonsingular Hessian. Then for any $\Ge>0$, there is a constant $C_\Ge$, depending on $\phi$ and $\Ge$ only, such that the following is true. For every $0<\Gd<1$, denote by $\mathcal P_\Gd$ a cover of $[-1,1]^2$ by finitely overlapping squares of side length $\Gd^{1/2}$. Then for any function $f:\R^3\to \C$ with Fourier support on the $\Gd$-neighbourhood of the graph of $\phi$ above $[-1,1]^2$, we have
\begin{equation}\label{eqn_BD_decoupling_l4}
    \norm {f}_{L^4(\R^3)}\leq C_\Ge\Gd^{-\Ge}\# \mathcal P_\Gd^{\frac 1 4}\left(\sum_{P\in \mathcal P_\Gd}\norm{f_P}_{L^4(\R^3)}^4\right)^{\frac 1 4}.
\end{equation}
If, in addition, the Hessian of $\phi$ is positive-definite on $[-1,1]^2$, i.e. $\phi$ is convex, then \eqref{eqn_BD_decoupling_l4} can be strengthened to
\begin{equation}\label{eqn_BD_decoupling_l2}
    \norm {f}_{L^4(\R^3)}\leq C_\Ge\Gd^{-\Ge}\left(\sum_{P\in \mathcal P_\Gd}\norm{f_P}_{L^4(\R^3)}^2\right)^{\frac 1 2}.
\end{equation}
\end{thm}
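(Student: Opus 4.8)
The plan is to deduce Theorem~\ref{thm_Bourgain_Demeter} from the decoupling inequalities for the two model nondegenerate quadrics in $\R^3$ --- the elliptic paraboloid $\{(\xi,\eta,\xi^2+\eta^2)\}$ and the hyperbolic paraboloid $\{(\xi,\eta,\xi^2-\eta^2)\}$ --- and to obtain those via the Bourgain--Demeter induction-on-scales machinery of \cites{BD2015,BD2017}.

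First I would reduce to the model quadrics. Since $\phi\in C^3([-1,1]^2)$ has invertible Hessian $H$ on the connected compact set $[-1,1]^2$, the function $\det H$ is continuous and nonvanishing, hence of constant sign; when $\det H>0$ the trace is likewise of constant sign, so $H$ is either positive-definite throughout (after possibly replacing $\phi$ by $-\phi$, a symmetry of $\R^3$ preserving \eqref{eqn_BD_decoupling_l4} and \eqref{eqn_BD_decoupling_l2}) or negative-definite throughout, and when $\det H<0$ it is indefinite throughout; in all cases the eigenvalues of $H$ are bounded above (by continuity and compactness) and bounded below in absolute value (since $\abs{\lambda_1\lambda_2}=\abs{\det H}\ge c>0$). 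One then partitions $[-1,1]^2$ into $O_\phi(1)$ small squares on each of which, after an affine change of variables in $(\xi,\eta)$ together with a shear in the third coordinate, $\phi$ is a $C^3$-small perturbation of $\tfrac12(\xi^2+\eta^2)$ in the definite case or of $\tfrac12(\xi^2-\eta^2)$ in the indefinite case. Decoupling for the model quadric transfers to such a perturbation by the standard argument of approximating $\phi$ on each $\Gd^{1/2}$-square by its second-order Taylor polynomial --- the remainder being $O(\Gd^{3/2})=o(\Gd)$ there --- and absorbing the error into the $\Gd$-neighbourhood; affine maps in $(\xi,\eta)$ and shears in $\zeta$ preserve the class of admissible $f$, the $\Gd$-neighbourhood structure, and the decoupling constants up to harmless factors, while rescaling the covering squares by $O(1)$ is immaterial.

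For the elliptic paraboloid I would invoke the Bourgain--Demeter $\ell^2(L^p)$-decoupling theorem, valid for $2\le p\le \tfrac{2(n+1)}{n-1}$ in $\R^n$; for $n=3$ the endpoint is exactly $p=4$, which gives \eqref{eqn_BD_decoupling_l2}. Its proof runs through: (i) a Bourgain--Guth broad--narrow decomposition bounding the linear decoupling constant by a product of a $3$-linear transverse decoupling constant and the decoupling constants of lower-dimensional (essentially parabola) pieces; (ii) control of the transverse term by the Bennett--Carbery--Tao multilinear Kakeya inequality, bootstrapped from multilinear to a small power of the linear constant at a finer scale; (iii) the parabolic rescaling $(\xi,\eta,\zeta)\mapsto(\rho^{-1}\xi,\rho^{-1}\eta,\rho^{-2}\zeta)$, which maps a $\rho$-cap of the paraboloid onto the whole paraboloid and closes the induction on scales; optimising the resulting recursion at $p=4$ yields the $\Gd^{-\Ge}$ gain. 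Then \eqref{eqn_BD_decoupling_l4} follows from \eqref{eqn_BD_decoupling_l2} by H\"older's inequality in $P\in\mathcal P_\Gd$.

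For the hyperbolic paraboloid only the weaker $\ell^4(L^4)$ bound \eqref{eqn_BD_decoupling_l4} is needed, and the same scheme applies: the nonvanishing Gaussian curvature still supplies the transversality for multilinear Kakeya in the broad case, and the anisotropic rescaling $(\xi,\eta,\zeta)\mapsto(\rho^{-1}\xi,\rho^{-1}\eta,\rho^{-2}\zeta)$ again preserves the surface. The one difference is that the narrow contributions now concentrate near the two rulings of the surface; after decoupling along those one-dimensional families one sums their contributions with an $\ell^4$ (rather than $\ell^2$) triangle inequality, which is exactly the point where positive-definiteness is used in the elliptic case and where, lacking it, one recovers only \eqref{eqn_BD_decoupling_l4}. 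I expect the genuine obstacles to lie in the two analytic inputs --- the multilinear Kakeya inequality and the bootstrap from multilinear to linear decoupling --- and in carrying out the perturbation/rescaling reduction uniformly in $\Gd$; the broad--narrow decomposition and the parabolic rescaling are then essentially formal.
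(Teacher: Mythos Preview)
The paper does not actually prove this theorem; it is quoted as a known result. Immediately after the statement the authors say only that \eqref{eqn_BD_decoupling_l2} follows from Section~7 of \cite{BD2015} (the $n=3$, $p=4$ case of $\ell^2$ decoupling for the elliptic paraboloid), and that \eqref{eqn_BD_decoupling_l4} follows from Theorem~1.1 of \cite{BD2017} combined with a Pramanik--Seeger iteration \cite{PS2007} to pass from the model quadric to a general $C^3$ surface with nonsingular Hessian. Your reduction step---partitioning $[-1,1]^2$ into $O_\phi(1)$ small pieces, affinely normalising, and replacing $\phi$ on each $\Gd^{1/2}$-square by its second-order Taylor polynomial with $O(\Gd^{3/2})$ error---is exactly that Pramanik--Seeger argument, so on this point you and the paper coincide. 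The remainder of your proposal (broad--narrow decomposition, multilinear Kakeya, parabolic rescaling, induction on scales) is a reasonable high-level outline of how \cites{BD2015,BD2017} themselves are proved, but the present paper makes no attempt to reproduce those arguments and simply invokes the cited theorems as a black box; your write-up therefore goes well beyond what is expected here.
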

The $l^2$-inequality \eqref{eqn_BD_decoupling_l2} follows from Section 7 of \cite{BD2015} when $n=3$ and $p=4$. The $l^4$-inequality \eqref{eqn_BD_decoupling_l4} follows from Theorem 1.1 of \cite{BD2017} when $n=3$ and $p=4$, combined with a standard Pramanik-Seeger type iteration introduced in \cite{PS2007} which is also the main argument in Section 7 of \cite{BD2015}.

\subsubsection{Main idea of proof}
Starting from Theorem \ref{thm_Bourgain_Demeter}, the major difficulty of Conjecture \ref{conj_BDK2019}
lies in handling the decoupling of the region near $\{\det D^2 \phi=0\}$. The reason we choose mixed-homogeneous polynomials is that the set $\{\det D^2 \phi=0\}$ behaves well: it is a union of algebraic curves of the form $x^s-\lambda y^r =0$, $x=0$ or $y=0$. Mixed-homogeneity also plays a role in reducing the domain of decoupling from $[-1,1]^2$ to an annulus (Section \ref{sec_reduction}). This allows us to separate the curves along which $\det(D^2 \phi)$ vanishes.  For now, let us restrict ourselves to one algebraic curve $x^s-\lambda y^r=0$. 

Let $R$ be a neighborhood of the algebraic curve. Following \cite{BDK2019}, the general procedure to decouple $R$ is as follows. First, we apply dyadic decomposition to localize $\phi$ to the region where $|\det D^2 \phi|$ is essentially $\sigma$, which we denote by $R(\sigma)$. Second, we decouple $R(\sigma)$ into rectangles $R\in\mathcal{R}$ using a preliminary cylindrical decoupling. Third, each $R\in \mathcal{R}$ can be rescaled so that the rescaled phase function has nonsingular Hessian. Finally, we apply Theorem \ref{thm_Bourgain_Demeter} to the rescaled piece to get the desired decoupling inequality.

However, there is a significant obstacle when applying the above procedure in our case. It can be described as follows. The orientations of maximal $\delta$-flat pieces are governed by the eigenvectors of $D^2 \phi$. Unfortunately, these eigenvectors are unstable over $R(\sigma)$. More precisely, in a simplified scenario as in Figure \ref{fig_ite_cyl_dec} below, let $R(\sigma)$ be the long strip and $\mathcal{R}$ be the collection of shaded rectangles. The orientations of shaded rectangles correspond to the eigenvector directions of $D^2\phi$. These orientations are the directions along which we rescale the shaded rectangles as the third step above. Due to the big difference in the orientations, there is no way we can apply cylindrical decoupling to decompose $R(\sigma)$ into the $R\in\mathcal{R}$ directly.

To overcome the obstacle, we will apply an iterative cylindrical decoupling on $R(\sigma)$ instead. We first decouple it to some intermediate rectangles. These are the smallest possible axis parallel rectangles into which we can cylindrically decouple the strip due to the eigenvector issues. On each of these rectangles, we perform a different shear transform so that the eigenvectors are more stable in each intermediate rectangle when compared to the original strip. Then we can decouple them further into rectangles of finer scale thanks to the improved stability. After some iterations, we will arrive at the shaded rectangles. In general, the number of iterations needed depends on the multiplicity of the factor $x^s - \lambda y^r$ in $\det D^2 \phi$. This can be thought of as a cylindrical version of Pramanik-Seeger iteration for functions similar to $x^2 + \sigma \psi(x,y)$.

\begin{figure}[h]
    \centering
        \begin{tikzpicture}
    
    \draw[] (0,0) -- (0,2);
    \foreach \i in {0,0.5}
    {
    \draw[fill=gray!20,fill opacity=0.25] (\i,0) -- (\i+0.5,0) -- (\i+0.5,2) -- (\i,2);
    }
    \node[] at (1.35,1) {\large $\cdot \cdot \cdot$};

    \draw[] (2.7+0.5,0.075) -- (2.7-0.3+0.033+0.5,2);
    \foreach \i in {0+2.7+0.5,0.5+2.7+0.5}
    {
    \draw[fill=gray!20,fill opacity=0.25] (\i,0.075) --(\i+0.015,0) -- (\i+0.5,0.075) -- (\i+0.5-0.3,2.075) -- (\i-0.3+0.033,2);
    }
    \node[] at (1.35+3.1,1) {\large $\cdot \cdot \cdot$};

    \draw[] (5.4+1,0.225) -- (5.4-0.9+0.1+1,2);
    \foreach \i in {0+5.4+1,0.5+5.4+1}
    {
    \draw[fill=gray!20,fill opacity=0.25] (\i,0.225)-- (\i+0.1,0) -- (\i+0.5,0.225) -- (\i+0.5-0.9,2.225) -- (\i-0.9+0.1,2);
    }
    \node[] at (1.35+6.2,1) {\large $\cdot \cdot \cdot$};
    
    \foreach \i in {0,2.7,5.4}
    {
    \draw[, thick] (\i,0) -- (\i+2.7,0) -- (\i+2.7,2) -- (\i,2) -- (\i,0);
    }
    \node[] at (8.7,1) {\huge $\cdot \cdot \cdot$};
    
    \draw (0,0) -- (10,0) -- (10,2) -- (0,2) -- (0,0);
    
    \end{tikzpicture}
    \caption{An iterative cylindrical decoupling}
    \label{fig_ite_cyl_dec}
\end{figure}
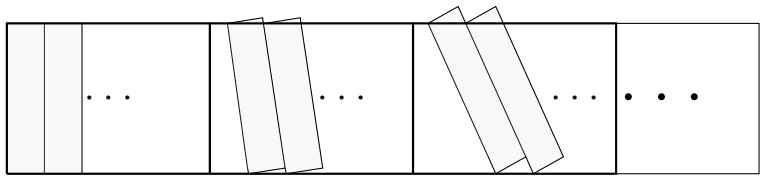

{\it Outline of the paper.} In Section \ref{sec_notation} we introduce the necessary notation. In Section \ref{sec_reduction} we use the properties of mixed-homogeneous polynomials to perform a series of reductions and classify different cases we will encounter. In Sections \ref{sec_axis_1} through \ref{sec_dec_curve_p2} we solve the decoupling problems in each case. In Section \ref{sec_convex} we note the changes needed to improve from $l^4(L^4)$ decoupling inequalities to stronger $l^2(L^4)$ inequalities as indicated in \eqref{eqn_decoupling_l2}. Lastly, Section \ref{sec_appendix} is an appendix, which studies some algebraic properties of mixed-homogeneous polynomials, as well as some fundamental notions in decoupling theory.

{\it Acknowledgement.} The first author would like to thank Betsy Stovall for her advice and help throughout the project. The second author would like to thank his advisor Malabika Pramanik for suggesting a prototype of this problem at an early stage. The authors would also like to thank the anonymous referee for their thoughtful comments. The first author was supported by NSF DMS-1653264.

\section{Notation}\label{sec_notation}
We will introduce the most frequently used terminologies in this paper.
\begin{enumerate}
    \item First of all, instead of writing out decoupling theorems in full detail as in Theorems \ref{thm_Bourgain_Demeter} and \ref{thm_decoupling}, we will
often state their short form as follows. For example, Theorem \ref{thm_decoupling} will be simply abbreviated as
\begin{equation*}
    \text{$[-1,1]^2$ can be decoupled into $(\phi,\Gd)$-flat rectangles.}
\end{equation*}
Here and throughout this paper, we always implicitly assume the rectangles have bounded overlap, that is, there is some constant $C$ depending on $\phi$ only, such that the sum of the indicator functions of these subsets is always bounded above by $C$.

We remark that that this terminology refers to both the $l^4$-decoupling \eqref{eqn_decoupling_l4} and the $l^2$-decoupling \eqref{eqn_decoupling_l2} if the Hessian is positive-semidefinite.

We also remark that each time we write this terminology, we implicitly lose a factor of size at most $C_{\varepsilon}\delta^{-\varepsilon}$. We will see that the number of uses of this terminology depends only on $\phi$. Therefore, the loss is acceptable and thus is omitted in the argument.

\item We will write $(x,y,z)$ to denote the frequency variables. This is not standard, but since we only do explicit computations on the physical space in the proof of Proposition \ref{prop_curve_neig_dec}, this will not cause confusion.

\item For any set $A\sub \R^2$, any smooth function $\phi:A\to \R$ and any $\Gd>0$, denote the (vertical) $\Gd$-neighbourhood of the graph of $\phi$ above $A$ by
$$
\mathcal N^\phi_{\Gd}(A)=\{(x,y,z):(x,y)\in A, |z-\phi(x,y)|<\Gd\}.
$$

\item Let $A$ be a regular geometric figure (such as a parallelogram) and $B$ be any set that lies in the same Euclidean space as $A$. $A,B$ are said to be equivalent if there is an absolute constant $C>1$ (which may depend on $\phi$) such that $C^{-1}A\sub B\sub CA$. Here $CA$ means the dilation of $A$ by a factor of $C$ with respect to the centre of $A$.

\item We use the notation $A=O(B)$, or $A\lesssim B$ to mean there is an absolute constant $C$ such that $A\leq CB$. In this paper, all implicit constants are allowed to depend on $\phi$.
\end{enumerate}

\section{Preliminary reductions}\label{sec_reduction}
In this section we will reduce the proof of Theorem \ref{thm_decoupling} to slightly simpler cases.
\subsection{Radially dyadic decomposition}
We first use the properties of a mixed-homogeneous polynomial to reduce the decoupling problem to the closed ``annulus"
$$
A:=[-2,2]^2\bs (-1,1)^2.
$$
\begin{lem}\label{lem_scaling}
Theorem \ref{thm_decoupling} holds if we can decouple the annulus $A$ into $(\phi,\Gd)$-flat rectangles.
\end{lem}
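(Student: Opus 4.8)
The plan is to exploit the scaling relation \eqref{eqn_mixed_homo_def0} to replace the cube $[-1,1]^2$ by countably many dilated copies of the annulus $A$, and then patch together the decoupling statements on the annuli using the ``first principles'' principle recorded in the Remark after Theorem \ref{thm_decoupling}. Concretely, the mixed-homogeneity $\phi(x,y)=\rho^{-q}\phi(\rho^r x,\rho^s y)$ means that the anisotropic dilation $T_\rho(x,y)=(\rho^{r}x,\rho^{s}y)$ conjugates the graph of $\phi$ over one region to the graph of $\phi$ over its $T_\rho$-image, up to a vertical scaling by $\rho^{q}$. Since affine maps — in particular these anisotropic dilations together with the vertical rescaling — are exactly the symmetries that preserve decoupling inequalities (the $\delta$-neighbourhood of the graph transforms to the $\rho^{q}\delta$-neighbourhood, and $L^4$ norms pick up only Jacobian factors that cancel between the two sides), a decoupling for $\phi$ over $A$ at scale $\delta$ transfers to a decoupling for $\phi$ over $T_\rho(A)$ at scale $\rho^{q}\delta$.

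\textbf{Step 1: radially dyadic decomposition.} I would cover $[-1,1]^2\setminus\{(0,0)\}$ by the dyadic annuli $A_k:=T_{2^{-k}}(A)$ for $k\ge 0$ (and note $\{0\}\times[-1,1]$, $[-1,1]\times\{0\}$, and the origin itself contribute Fourier supports of measure zero or can be absorbed into the first annulus). Only $k\lesssim \log \delta^{-1}$ of these annuli carry nontrivial Fourier mass once we intersect with the $\delta$-neighbourhood of the graph — below scale $\delta$ in each variable the neighbourhoods of different annuli are comparable and can be lumped together. So we are decoupling $f$ into $O(\log\delta^{-1})$ pieces $f_{A_k}$, plus a remaining innermost piece, which by the triangle inequality plus H\"older costs only a factor $(\log\delta^{-1})^{C}\le C_\varepsilon\delta^{-\varepsilon}$ — this is exactly the situation flagged in the Remark, and the reference ``the argument right after \eqref{eqn_radially_dyadic}'' suggests this is the intended bookkeeping.

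\textbf{Step 2: rescale each annulus to $A$.} On $A_k=T_{2^{-k}}(A)$, apply the change of variables $(x,y)=T_{2^{-k}}(u,v)$ on the frequency side, together with the vertical rescaling $z\mapsto 2^{-kq}z$. Using \eqref{eqn_mixed_homo_def0}, the graph of $\phi$ over $A_k$ becomes the graph of $\phi$ over $A$, and the $\delta$-neighbourhood becomes the $2^{kq}\delta$-neighbourhood. By hypothesis, $A$ can be decoupled at scale $2^{kq}\delta$ into $(\phi,2^{kq}\delta)$-flat rectangles; undoing the (affine) change of variables turns these into boundedly overlapping rectangles covering $A_k$, over each of which $\phi$ is $\delta$-flat (flatness being an affine-invariant notion once the vertical scaling is accounted for). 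One subtlety: if $2^{kq}\delta\ge 1$ the hypothesis is vacuous, but in that regime $A_k$ is so close to the origin that $\phi$ is already $\delta$-flat over all of $A_k$ and no decoupling is needed — so the range of $k$ is effectively $0\le k\lesssim q^{-1}\log\delta^{-1}$, consistent with Step 1.

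\textbf{Step 3: assemble and handle convexity.} Summing the per-annulus decouplings and combining with Step 1 yields \eqref{eqn_decoupling_l4} for $[-1,1]^2$, with the accumulated $C_\varepsilon\delta^{-\varepsilon}$ losses (finitely many applications, number depending on $\phi$ only). For the convex case, the Hessian of $\phi$ is positive-semidefinite on $[-1,1]^2$ if and only if it is so on $A$ (indeed $D^2\phi$ at $T_\rho(u,v)$ is, up to the positive conjugation by $\mathrm{diag}(\rho^{-r},\rho^{-s})$ and the positive scalar $\rho^{q}$, equal to $D^2\phi$ at $(u,v)$, so definiteness is preserved under $T_\rho$), so the $l^2(L^4)$ hypothesis on $A$ upgrades in the same way; one only has to check that the $l^2$-orthogonality is not degraded by the dyadic gluing in Step 1 — it is not, because the $O(\log\delta^{-1})$ annular pieces can be combined by $l^2$ triangle inequality at the cost of another $(\log\delta^{-1})^{1/2}$, again absorbed into $\delta^{-\varepsilon}$.

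\textbf{Main obstacle.} The genuinely delicate point is Step 1: verifying that the radially dyadic pieces $f_{A_k}$ are themselves boundedly-overlapping Fourier restrictions that can be separated at negligible cost — i.e. that the annuli $A_k$ have bounded overlap (they do, since $T_{2^{-k}}$ are nested dilations) and that the innermost leftover region, together with the coordinate axes $\{x=0\}$ and $\{y=0\}$ which the dilations $T_\rho$ fix rather than sweep out, genuinely contributes only a $\delta$-flat (hence trivially decoupled) or measure-zero piece. Everything else is the routine affine-invariance of decoupling, which I would state once as a lemma and invoke.
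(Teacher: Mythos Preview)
Your proposal is correct and follows essentially the same approach as the paper: a radially dyadic (anisotropic) decomposition of $[-1,1]^2$ into $O(\log\delta^{-1})$ rescaled copies of $A$ plus a trivially $\delta$-flat innermost piece, triangle/H\"older to separate them, and the affine rescaling furnished by mixed-homogeneity to transfer the hypothesis on $A$ to each annulus. The only slip is your worry about the coordinate axes --- the annuli $T_{2^{-k}}(A)$ do in fact cover the axes away from the origin (e.g.\ $(0,3/2)\in A$), so there is no leftover there; but as you note this would be measure-zero regardless.
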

\begin{proof}
Let $q,r,s$ be positive integers such that $\phi(x,y) = \rho^{-q} \phi(\rho^{r} x, \rho^{s} y)$ for all $\rho >0$.

We decompose $[-1,1]^2$ into $A_{j}$, $j\geq 0$ as follows. Define
$$
A_0=[-\Gd^{r/q},\Gd^{r/q}]\times [-\Gd^{s/q},\Gd^{s/q}],
$$
and for $j\geq 1$,
$$
A_{j}=([-(2^{j}\Gd)^{r/q},(2^{j}\Gd)^{r/q}]\times [-(2^{j}\Gd)^{s/q},(2^{j}\Gd)^{s/q}])\bs A_{j-1}.
$$
Note that we have at most $O(\log \Gd^{-1})$ such $j$ since we are in $[-1,1]^2$. We claim that it suffices to decouple each $A_j$, $j\geq 0$ with respect to $\phi$.

Indeed, suppose for each $j$ we have found a family of boundedly overlapping rectangles $\mathcal P_j$, such that $\phi$ is $\Gd$-flat over each $P\in \mathcal P_j$ and that for any function $f_{A_j}$ with Fourier support on $\mathcal N^\phi_{\Gd}(A_j)$, we have
\begin{equation}\label{eqn_radially_dyadic}
    \norm {f_{A_j}}_{L^4(\R^3)}\leq C_\Ge\Gd^{-\Ge}\# \mathcal P_j^{\frac 1 4}\left(\sum_{P\in \mathcal P_j}\norm{f_P}_{L^4(\R^3)}^4\right)^{\frac 1 4}.
\end{equation}
Then we simply define $\mathcal P:=\cup_j \mathcal P_j$. Given any $f$ Fourier supported on $\mathcal N^\phi_\Gd([-1,1]^2)$, we simply use triangle and H\"older's inequalities:
\begin{align*}
    \norm {f}_{L^4(\R^3)}
    &\leq \sum_j \norm {f_{A_j}}_{L^4(\R^3)}\lesssim \log(\Gd^{-1})^{3/4} \left(\sum_j \norm {f_{A_j}}^4_{L^4(\R^3)}\right)^{1/4}\\
    &\lesssim_\Ge \Gd^{-\Ge} \left(\sum_j C^4_\Ge\Gd^{-4\Ge}\# \mathcal P_j\sum_{P\in \mathcal P_j}\norm{f_P}_{L^4(\R^3)}^4\right)^{1/4}\\
    &\leq C_\Ge\Gd^{-2\Ge}\left( \# \mathcal P\sum_{P\in \mathcal P}\norm{f_P}_{L^4(\R^3)}^4\right)^{1/4},
\end{align*}
as required. The argument for the $l^2$-inequality is similar and easier.

Now we come to the decoupling of each $A_j$, $j\geq 0$. Over $A_0$, taking $\rho=\Gd^{-1/q}$ and using the mixed-homogeneity of $\phi$ we easily have 
$$
\sup_{(x,y)\in A_0}|\phi(x,y)|=\Gd \sup_{(x,y)\in [-1,1]^2}|\phi(x,y)|\lesssim \Gd,
$$
and hence $\phi$ is $\Gd$-flat over $A_0$.

Thus it remains to decouple each $A_j$, $j\geq 1$ with respect to $\phi$. The following simple rescaling reduces the problem to decoupling the annulus $A=[-2,2]^2\bs [-1,1]^2$ into $\Gd$-flat rectangles. To see this, fix a $j\geq 1$ and write $\rho=(2^{j-1} \Gd)^{1/q}\geq \Gd^{1/q}$. Let $f$ have Fourier support on $\mathcal N^\phi_{\Gd}(A_j)$, and define $g$ by the relation
$$
\hat g(x,y,z)=\hat f(\rho^{r} x,\rho^{s} y,\rho^q z).
$$
Thus, for any $(x,y,z)$ in the support of $\hat g$, we have $(x,y)\in [1,2]^2$, and using the mixed-homogeneity of $\phi$ we have $|z-\phi(x,y)|<\rho^{-q}\Gd$. Hence $g$ is Fourier supported on $\mathcal N^\phi_{\rho^{-q}\Gd}(A)$.

Thus, to decouple $A_j$ into $(\phi,\Gd)$-flat rectangles, it is equivalent to decoupling $A$ into $(\phi,\rho^{-q}\Gd)$-flat rectangles (note that $\rho^{-q}\Gd\leq 1$ since $\rho\geq \Gd^{1/q}$). The remaining argument is simple but we include it here for completeness.

Suppose we have already managed to decouple $A$ into $(\phi,\rho^{-q}\Gd)$-flat rectangles. That is, for some $C_\Ge$ depending on $\phi,\Ge$ only, there is a boundedly overlapping family $\mathcal P'$ covering $A$, such that $\phi$ is $\rho^{-q}\Gd$-flat over each $P'\in \mathcal P'$ and
$$
\norm {g}_{L^4(\R^3)}\leq C_\Ge(\rho^{-1}\Gd)^{-\Ge}\# \mathcal P'^{\frac 1 4}\left(\sum_{P'\in \mathcal P'}\norm{g_{P'}}_{L^4(\R^3)}^4\right)^{\frac 1 4}.
$$
For each $P'\in \mathcal P'$, let $P$ be the image of $P'$ under the dilation mapping $(x,y)\mapsto (\rho^r x,\rho^s y)$.\footnote{Strictly, $P$ may not be a rectangle if $s\neq r$; however, this technicality is easily solved since $P$ is always a parallelogram, which can be slightly enlarged to a rectangle that is equivalent to it (see Section \ref{sec_notation}).} Then, by the scaling invariance of the Fourier transform, we can rescale back to $f$ to get
$$
\norm {f}_{L^4(\R^3)}\leq C_\Ge\Gd^{-\Ge}\# \mathcal P^{\frac 1 4}\left(\sum_{P\in \mathcal P}\norm{f_{P}}_{L^4(\R^3)}^4\right)^{\frac 1 4},
$$
where we have used $\rho^\Ge\leq 1$ and $\#\mathcal P=\#\mathcal P'$.

The argument of the $l^2$-inequality is similar and easier, since a positive dilation does not change the sign of the determinant.
\end{proof}

\subsection{Localization}
Consider the Hessian determinant of $\phi$ defined by
\begin{equation}
    K(x,y)=\det(D^2\phi)(x,y)=(\phi_{xx}\phi_{yy}-\phi_{xy}^2)(x,y).
\end{equation}
If $K(x,y)$ has no zero on $A=[-2,2]^2\bs [-1,1]^2$, then we may directly use Theorem \ref{thm_Bourgain_Demeter} to conclude the proof. Otherwise, we will first analyse the zero set of $K$, localize to each component of the zero set, and treat them individually. 

\subsubsection{The case of identically zero Gaussian curvature}
We first settle the case when $K$ vanishes identically. In this case, $\phi:\R^2\to \R$ defines a complete surface of zero Gaussian curvature. Thus, by Section 5.8 of \cite{Docarmo2016}, it must be a cylinder or a plane. In either case, by a rotation we may write $\phi(x,y)=Cx+\psi(y)$ for a univariate polynomial $\psi$. Since the linear term $Cx$ is negligible in decoupling, we may apply lower dimensional decoupling to reduce the problem to decoupling for the univariate polynomial $\psi$, which has been solved in \cites{BGLSX,Demeter2020,Yang2} as we mentioned in the introduction.

From now on we always assume we are in the nontrivial case when $K$ vanishes somewhere on $A$ but is not identically zero.

\subsubsection{Facts about mixed-homogeneous polynomials}
We shall need the following facts about mixed-homogeneous polynomials. First, it is easy to observe that the Hessian determinant of a mixed-homogeneous polynomial satisfying \eqref{eqn_mixed_homo_def0} is also mixed-homogeneous, with the same exponents $r,s$. Using the factorization property \eqref{eqn_factorisation_intro}, we can write $K$ as follows:
\begin{equation*}
   \phi(x,y)=x^{\nu_1}y^{\nu_2}\prod_{j=1}^{M} (x^s - \lambda_j y^r)^{n_j}P(x^s,y^r),
\end{equation*}
for some nonnegative integers $\nu_1,\nu_2,M,n_j$, nonzero real numbers $\Gl_j$, and a homogeneous polynomial $P$ that never vanishes except at the origin; thus $P$ is bounded away from $0$ on $A_0$.

Therefore, for the zero set of $K$ on the annulus $A=[-2,2]^2\bs [-1,1]^2$, we have the following scenarios, which can happen simultaneously.
\begin{enumerate}
    \item If $\nu_1 \neq 0$, i.e. $x \mid K$, then $K$ vanishes on the $y$-axis.
    \item If $\nu_2 \neq 0$, i.e. $y \mid K$, then $K$ vanishes on the $x$-axis.
    \item If $n_j>0$ for some $j$, then $K$ vanishes on the curve $x^s - \lambda_j y^r =0$.
\end{enumerate}

\subsubsection{Separation of zero set of $K$}\label{sec_choose_c}
Note that the zero set of $K$ consists of curves that only intersect at the origin. Hence, within the annulus $A$, the zero set of $K$ has finitely many positively separated connected components, each of which is either part of the $x$- or $y$-axis, or part of the curve $x^s - \lambda_j y^r =0$ with nonzero curvature.

Let $c_\phi>0$ be a suitably small constant depending on $\phi$ only. We will not explicitly define its value here, but for a couple of times later in the paper we will impose several conditions that $c_\phi$ has to satisfy.

Now we perform a preliminary decomposition of $A$ into finitely many disjoint regions: the regions where $(x,y)$ is within distance $c_\phi$ of the zero set of $K$ and elsewhere. By triangle and H\"older's inequalities it suffices to consider each region. On the region away from the zero set of $K$, we use Theorem \ref{thm_Bourgain_Demeter} to conclude the proof. Thus, we are left with the following tasks:
\begin{enumerate}
    \descitem {(A)} Decoupling the $c_\phi$-neighbourhood of the $x$-and $y$-axes within the annulus $A$ into $(\phi,\Gd)$-flat rectangles.
    \descitem {(B)} Decoupling the $c_\phi$-neighbourhood of the curve $x^s - \lambda_j y^r =0$ within the annulus $A$ into $(\phi,\Gd)$-flat rectangles.
\end{enumerate}

\subsection{Classification of scenarios}\label{sec_classification}
In this section, for both tasks mentioned in Section \ref{sec_choose_c} above, we classify the mixed-homogeneous polynomials, so that we can treat each scenario separately in the following sections.

\subsubsection{Decoupling neighbourhoods of axes}
To complete Task \descref{(A)}, we first note that by a rotation it suffices to consider the $c_\phi$-neighbourhood of the $y$-axis. For this purpose, we consider the following cases:
\begin{enumerate}
    \descitem {(A1)} $\phi$ has a factor $y^2$.
    \descitem {(A2)} $\phi$ cannot be divided by $y^2$.
\end{enumerate}
As we shall see, Case \descref{(A1)} is much simpler and will be studied in Section \ref{sec_axis_1}. The more difficult Case \descref{(A2)} will be studied in Section \ref{sec_dec_axi_p2}.

\subsubsection{Decoupling neighbourhoods of curves}
To complete Task \descref{(B)}, we have the following cases:
\begin{enumerate}
    \descitem {(B1)} $\phi$ has a factor $(x^s-\Gl_j y^r)^2$.
    \descitem {(B2)} $\phi$ cannot be divided by $(x^s-\Gl_j y^r)^2$.
\end{enumerate}
As we shall see, Case \descref{(B1)} resembles Case \descref{(A1)}, and Case \descref{(B2)} resembles Case \descref{(A2)}. We will solve Case \descref{(B1)} in Section \ref{sec_dec_cur_p1} and Case \descref{(B2)} in Section \ref{sec_dec_curve_p2}.

\section{Decoupling axis neighbourhoods: Part I}\label{sec_axis_1}

In this section we deal with Case \descref{(A1)} introduced in Section \ref{sec_classification}, namely, when $y^2$ divides $\phi$. Without loss of generality, we only consider the rectangle $[1,2]\times [0,c_\phi]$ in the first quadrant.

\begin{prop}
Let $\phi(x,y)$ be a mixed-homogeneous polynomial with a factor $y^2$. Then $[1,2]\times [0,c_\phi]$ can be decoupled into $(\phi,\Gd)$-flat rectangles that are also axis-parallel.
\end{prop}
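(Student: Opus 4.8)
The plan is to exploit the factor $y^2$ to reduce the problem, via a single rescaling in the $y$-variable, to an application of the Bourgain–Demeter theorem (Theorem \ref{thm_Bourgain_Demeter}). Since $\phi$ is mixed-homogeneous with a factor $y^2$, on the strip $[1,2]\times[0,c_\phi]$ we can write $\phi(x,y)=y^2\psi(x,y)$ for a smooth function $\psi$ which, by the appendix facts and the fact that $x\sim 1$, is comparable to $1$ together with all of its derivatives of bounded order. The first step is the dyadic decomposition in $y$: split $[0,c_\phi]$ into the slabs $y\sim 2^{-k}$ for $2^{-k}\geq\Gd^{1/2}$ (together with the bottom slab $0\le y\le\Gd^{1/2}$). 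Over the bottom slab $\phi$ is already $O(\Gd)$-flat because $|y^2\psi|\lesssim \Gd$ there, so nothing needs to be done. As noted in the remark after Theorem \ref{thm_decoupling}, the $O(\log\Gd^{-1})$ slabs cost only a harmless $\Gd^{-\Ge}$ factor after triangle and Hölder's inequalities, so it suffices to decouple a single slab $[1,2]\times\{y\sim\sigma\}$ with $\sigma\ge\Gd^{1/2}$.

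On a fixed slab $y\sim\sigma$, the second step is an anisotropic rescaling: set $y=\sigma\tilde y$ with $\tilde y\sim 1$, and $x=\tilde x$, and correspondingly rescale the vertical variable by the size of $\phi$ on the slab, namely $\phi\sim\sigma^2$. Concretely, if $f$ is Fourier supported on $\mathcal N^\phi_\Gd$ of the slab, define $g$ by $\hat g(\tilde x,\tilde y,\tilde z)=\hat f(\tilde x,\sigma\tilde y,\sigma^2\tilde z)$; then $g$ is Fourier supported on $\mathcal N^{\tilde\phi}_{\sigma^{-2}\Gd}$ of $[1,2]\times\{\tilde y\sim 1\}$, where $\tilde\phi(\tilde x,\tilde y)=\sigma^{-2}\phi(\tilde x,\sigma\tilde y)=\tilde y^2\psi(\tilde x,\sigma\tilde y)$. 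The key point is that on $[1,2]\times\{\tilde y\sim 1\}$ the rescaled phase $\tilde\phi$ has Hessian determinant bounded below by a constant depending only on $\phi$: indeed $\det D^2(\tilde y^2\psi)=\tilde y^2(\text{stuff})+\dots$, and since $y^2\mid\phi$ but (generically) $y^3\nmid\phi$, on the region $\tilde x\sim1,\ \tilde y\sim1$ one checks $|\det D^2\tilde\phi|\gtrsim 1$ uniformly in $\sigma$; this is precisely the content of the relevant appendix lemma on mixed-homogeneous polynomials. One must also record that the derivatives of $\tilde\phi$ up to order $3$ are $O(1)$, so Theorem \ref{thm_Bourgain_Demeter} applies with constants uniform in $\sigma$.

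The third step is then to apply Theorem \ref{thm_Bourgain_Demeter} to $g$ at scale $\Gd'=\sigma^{-2}\Gd\le 1$: this decouples $[1,2]\times\{\tilde y\sim 1\}$ into $\Gd'^{1/2}$-squares, over each of which $\tilde\phi$ is $\Gd'$-flat. Finally, undo the rescaling: a $\Gd'^{1/2}$-square in the $(\tilde x,\tilde y)$ plane pulls back to a rectangle of dimensions $\Gd'^{1/2}\times\sigma\Gd'^{1/2}=(\sigma^{-1}\Gd^{1/2}\wedge 1)\times(\sigma^{-1}\Gd^{1/2}\wedge 1)\cdot\sigma$ — in any case an \emph{axis-parallel} rectangle — and $\Gd'$-flatness of $\tilde\phi$ transfers back to $\Gd$-flatness of $\phi$ because the vertical rescaling factor is exactly $\sigma^2=\Gd/\Gd'$. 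Collecting the rectangles over all slabs gives the desired boundedly overlapping family of axis-parallel $(\phi,\Gd)$-flat rectangles, and the $\Gd^{-\Ge}$ losses accumulate only over the $O(\log\Gd^{-1})$ slabs and the single use of Theorem \ref{thm_Bourgain_Demeter}, hence are absorbed. I expect the only genuinely non-routine point to be the uniform-in-$\sigma$ lower bound $|\det D^2\tilde\phi|\gtrsim 1$ on $\tilde x\sim1,\tilde y\sim1$ (equivalently, that after pulling out $y^2$ the residual factor $\psi$ is non-degenerate on the relevant region), which is where the structure of mixed-homogeneous polynomials from the appendix is essential; the rest is bookkeeping of the rescaling and the standard harmless logarithmic factors.
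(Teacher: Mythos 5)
Your overall strategy — dyadic decomposition in $y$, anisotropic rescaling on each slab, then Bourgain--Demeter — is the same as the paper's, but there is a genuine gap: you pull out only $y^2$ and rescale the vertical frequency by $\Gs^2$, whereas the hypothesis is merely ``$y^2$ divides $\phi$'', so the largest power $y^k$ dividing $\phi$ may have $k>2$. If $k>2$, your $\psi$ with $\phi=y^2\psi$ satisfies $\psi(x,0)\equiv 0$, so it is \emph{not} comparable to $1$; and after your rescaling, $\tilde\phi(\tilde x,\tilde y)=\Gs^{-2}\phi(\tilde x,\Gs\tilde y)=\Gs^{k-2}\tilde y^k P(\tilde x,\Gs\tilde y)$ has all second derivatives of size $O(\Gs^{k-2})$, hence $\det D^2\tilde\phi=O(\Gs^{2(k-2)})\to 0$, and Theorem~\ref{thm_Bourgain_Demeter} does not apply with $\sigma$-uniform constants. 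You acknowledge this with the parenthetical ``(generically) $y^3\nmid\phi$'', but the proposition has no genericity hypothesis, so the argument as written does not prove the stated result. The paper's proof uses $k$ equal to the largest power of $y$ dividing $\phi$, rescales by $\Gs^{-k}\phi(\cdot,\Gs\cdot)$, and takes the bottom slab to be $[1,2]\times[0,\Gd^{1/k}]$, which is exactly what fixes the issue.

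Two further points. First, you defer the uniform lower bound $|\det D^2\tilde\phi|\gtrsim 1$ to ``the relevant appendix lemma on mixed-homogeneous polynomials'', but no such lemma is in the appendix; the paper establishes this by a direct calculation in the proof itself, writing $K=y^{2k-2}S(x,y)$ and showing $S(x,0)=\left(k(k-1)d(d-1)-k^2d^2\right)x^{2d-2}\neq 0$ for $x\in[1,2]$ (using the appendix expansion $P=x^d+yR$), then choosing $c_\phi$ small so that $S$ stays bounded away from zero. That computation is the genuinely non-routine step and needs to be carried out, not cited. Second, the case $\phi(x,y)=Cy^k$ (i.e., $P\equiv C$) is legitimate under the hypotheses, and there the Hessian determinant is identically zero both before and after rescaling, so Bourgain--Demeter cannot be invoked; the paper handles this separately via a one-dimensional decoupling for $y\mapsto Cy^k$. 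Your proposal misses this degenerate branch.
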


\begin{proof}
Write $k\geq 2$ as the largest positive integer such that $y^k$ divides $\phi$. Then we write $\phi(x,y)=y^k P$. If $P\equiv C\neq 0$, then we just need to do a lower dimensional decoupling for the curve $y\mapsto Cy^k$ over $[0,c_\phi]$. Using Theorem 1.4 of \cite{Yang2} with $p=4$, we can thus decouple $[1,2]\times [0,c_\phi]$ into 
$$
\{[1,2]\times [(j-1)^{2/k}\Gd^{1/k},j^{2/k}\Gd^{1/k}],1\leq j\lesssim \Gd^{-1/2}\},
$$
on each rectangle of which $\phi$ is $\Gd$-flat.

If $P\not\equiv C$, then by direct computation, we have
\begin{align*}
    K(x,y)&=\det(D^2\phi)(x,y)\\
    &=y^{2k-2}[(k(k-1)P+2kyP_y+y^2P_{yy})P_{xx}-(kP_x+yP_{xy})^2]\\
    &:=y^{2k-2}S(x,y).
\end{align*}
Thus $y^{2k-2}$ divides $K(x,y)$. In view of the mixed-homogeneity, using \eqref{eqn_expansion} we may write $P=x^d+yR(x,y)$, $d\geq 1$. By direct computation 
$$
S(x,0)=k(k-1)d(d-1)x^{2d-2}-k^2d^2x^{2d-2},
$$
which is never zero for $x\in [1,2]$.

Now we perform a dyadic decomposition: let $R_0=[1,2]\times [0,\Gd^{1/k}]$ and for $j\geq 1$, define
$$
R_j=[1,2]\times [2^{j-1}\Gd^{1/k},2^j \Gd^{1/k}].
$$
It suffices to decouple each $R_j$ due to our tolerance of logarithmic losses. On $R_0$ we have $|\phi(x,y)|\lesssim\Gd$, and hence $R_0$ is $\Gd$-flat. 

For $j\geq 1$, we denote $\Gs=2^{j-1}\Gd^{1/k}\in [\Gd^{1/k},c_\phi]$. We are going to decouple $R_j=[1,2]\times [\Gs,2\Gs]$ into $(\phi,\Gd)$-flat rectangles that are axis parallel.

The following argument is again rescaling. Let $f$ have Fourier support on $\mathcal N^\phi_\Gd(R_j)$. Define $g$ via the relation
$$
\hat g(x,y,z)=\hat f(x,\Gs y,\Gs^k z),
$$
and thus $g$ has Fourier support on $\mathcal N^{\psi}_{\Gs^{-k}\Gd}([1,2]\times [1,2])$ where
$$
\psi(x,y):=\Gs^{-k}\phi(x,\Gs y)=y^k P(x,\Gs y).
$$
Note that $\psi$ has bounded coefficients. By the previous computation,
$$
\det(D^2 \psi)(x,y)=y^{2k-2}S(x,\Gs y).
$$
Since $S(x,0)$ is never zero on $[1,2]$, by continuity, if $c_\phi$ in Section \ref{sec_choose_c} is chosen to be small enough, then $|S(x,y)|$ is bounded below for all $(x,y)\in [1,2]\times [0,c_\phi]$. As a result, we have $|\det(D^2 \psi)(x,y)|\gtrsim 1$ over $[1,2]^2$. Now we may apply Theorem \ref{thm_Bourgain_Demeter} to decouple $[1,2]^2$ into $(\psi,\Gs^{-k}\Gd)$-flat squares with side length $\Gs^{-k/2}\Gd^{1/2}$ that are also axis-parallel. Scaling back as in the end of proof of Lemma \ref{lem_scaling}, we thus have decoupled $R_j=[1,2]\times [\Gs,2\Gs]$ into $(\phi,\Gd)$-flat rectangles that are axis-parallel and have dimension
$$
\Gs^{-k/2}\Gd^{1/2}\times \Gs^{1-k/2}\Gd^{1/2}
$$
in the $x$-and $y$-directions, respectively.

\end{proof}

\section{Decoupling axis neighbourhoods: Part II}\label{sec_dec_axi_p2}
In this section we complete Task \descref{(A)} when we are in Case \descref{(A2)} introduced in Section \ref{sec_classification}. We again only consider the part in the first quadrant.

By Proposition \ref{prop_y2_not_divide} in the appendix, if $y^2$ does not divide $\phi$, then $\phi$ must be of the form
\begin{equation}\label{eqn_part2}
\phi(x,y)=Cx^m+ yP(x,y),
\end{equation}
for some $C\neq 0$, $m\geq 2$ and some polynomial $P$.

\begin{prop}\label{prop_part2}
Let $\phi(x,y)$ be a mixed-homogeneous polynomial not divisible by $y^2$. Then $[1,2]\times [0,c_\phi]$ can be decoupled into $(\phi,\Gd)$-flat rectangles.
\end{prop}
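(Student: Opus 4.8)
The plan is to keep the template of the preceding section — a dyadic decomposition in $y$, then a rescaling onto a nondegenerate piece — but to replace the single rescaling step, which now breaks down, by an \emph{iterated cylindrical decoupling}. Two structural facts start things off. Since $\phi(x,y)=Cx^m+yP(x,y)$ with $C\neq 0$ and $m\geq 2$, we have $\phi_{xx}(x,y)=Cm(m-1)x^{m-2}+O(y)$, so $|\phi_{xx}|\sim 1$ on $[1,2]\times[0,c_\phi]$ once $c_\phi$ is small; thus one principal curvature is $\sim 1$ throughout the region. Secondly, $K=\det D^2\phi$ is mixed-homogeneous, and by the factorization recalled in the appendix its only zeros in $[1,2]\times[0,c_\phi]$ (for $c_\phi$ small) lie on the segment $[1,2]\times\{0\}$. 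Hence either $y\nmid K$, in which case $|K|\sim 1$ on the region and Theorem~\ref{thm_Bourgain_Demeter} applies directly; or the largest power of $y$ dividing $K$ is $y^b$ for an integer $b=b(\phi)\geq 1$, and then $|K(x,y)|\sim y^b$ on $[1,2]\times[0,c_\phi]$, which is the case I focus on. A dyadic decomposition in $y$ then splits $[1,2]\times[0,c_\phi]$ into a flat piece $[1,2]\times[0,\delta^{\theta}]$ (with $\theta=\theta(\phi)$), on which, after cutting $[1,2]$ into intervals of length $\delta^{1/2}$, $\phi$ is $\delta$-flat by a direct estimate, together with the strips $R_\sigma:=[1,2]\times[\sigma,2\sigma]$ for $\delta^{\theta}\lesssim\sigma\lesssim c_\phi$ dyadic; since there are $O(\log\delta^{-1})$ values of $\sigma$, it suffices to decouple each $R_\sigma$.

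Decoupling one strip $R_\sigma$ is the core. There the Hessian has eigenvalues $\sim 1$ and $\sim\sigma^b$, so the target rectangles should have side $\sim\delta^{1/2}$ along the large-curvature eigendirection and $\sim(\delta/\sigma^b)^{1/2}$ along the small-curvature one (and the bound $\sigma\gtrsim\delta^{\theta}$ makes the latter $\lesssim\sigma$, so they fit inside $R_\sigma$). The obstruction is that these eigendirections are not constant across $R_\sigma$: they turn by an angle large compared with the target aspect ratio $\sim\sigma^{b/2}$, and one checks that no single affine map carries $R_\sigma$ onto a model of nonsingular Hessian — a single cylindrical decoupling in $x$, applying the one-dimensional polynomial decoupling of \cite{Yang2} to the convex profile curve $x\mapsto\phi(x,y_0)$, can only reach $x$-strips still wider than the target width $\delta^{1/2}$. (The severe instance is the sub-case where $K$ vanishes on $[1,2]\times\{0\}$ only through a cancellation among the entries of $D^2\phi$; there all entries of $D^2\phi$ are $\sim 1$ with order-one variation, so both eigendirections wobble by $O(1)$ over $R_\sigma$.) To get around this I would iterate: at stage $i$ one has a partition of $R_\sigma$ into rectangles, axis-parallel in the current (sheared and rescaled) coordinates, on each of which the rescaled phase is a controlled perturbation of one that is nondegenerately curved in $x$ and curved to order $\sigma^{b_i}$ in $y$; on each such rectangle one (a) cylindrically decouples it in the $x$-variable, down to the finest axis-parallel sub-rectangles on which a single shear straightens the stage-$i$ eigendirection; (b) performs that shear; and (c) rescales. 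A stage replaces $b_i$ by a strictly smaller $b_{i+1}$ — a fibrewise (``cylindrical'') form of the Pramanik--Seeger flattening of a phase of the type $x^2+\sigma\psi(x,y)$, cf.\ \cite{PS2007} — and after a number of stages bounded in terms of $b$ (hence by $\phi$) the rescaled phase on each surviving piece has nonsingular Hessian, at which point Theorem~\ref{thm_Bourgain_Demeter} finishes the decoupling of that piece.

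To conclude, one undoes the composition of shears and dilations, turning the squares produced by Theorem~\ref{thm_Bourgain_Demeter} into parallelograms each equivalent to a rectangle, checks that these rectangles are $\delta$-flat for $\phi$ and boundedly overlapping, and sums over the $O(\log\delta^{-1})$ strips and the $O(1)$ iteration stages; since each invocation of a decoupling inequality costs a factor $C_\varepsilon\delta^{-\varepsilon}$ and only boundedly many are used, this yields \eqref{eqn_decoupling_l4} with a constant of the same shape after relabelling $\varepsilon$.

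I expect the main obstacle to be steps (a)--(c) of the iteration: choosing the right invariant $b_i$ and the right intermediate scale at each stage, verifying that the shear really stabilises the eigendirection enough to permit the next cylindrical decoupling, and controlling the lower-order error terms accumulated in the rescaled phases so that the process terminates after $O(1)$ stages depending only on $\phi$. A secondary, more mechanical difficulty is tracking the exact dimensions and orientations of the final parallelograms through the whole chain of maps and confirming their $\delta$-flatness and bounded overlap.
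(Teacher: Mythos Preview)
Your plan is essentially the paper's: the family $\mathcal A_l$ of Definition~\ref{def_A_l} and Lemma~\ref{lem_cylinderial_dec} formalise exactly the iterative shear--decouple--rescale scheme you sketch, with $l$ playing the role of your $b_i$ and each induction step dropping $l$ by one. The shear is chosen as in Lemma~\ref{lem_shear} (roughly $\mu\approx\tau_{xy}/\tau_{xx}$ at a base point), after which the phase takes the form \eqref{eqn_psi_expand}; a lower-dimensional decoupling in $x$ at scale $\sigma^{1/2}$ followed by rescaling then lands one in $\mathcal A_{l-1}$. The obstacle you flag in your last paragraph is precisely what Lemmas~\ref{lem_shear} and~\ref{lem_expand} are for.

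There is one genuine slip: the bottom piece $[1,2]\times[0,\delta^\theta]$ is \emph{not} handled by the direct estimate you describe. For the target rectangles on a strip $R_\sigma$ to have height $(\delta/\sigma^b)^{1/2}\lesssim\sigma$ you need $\theta\le 1/(b+2)$; but with that $\theta$, on the bottom piece the quadratic term $\phi_{yy}(y-y_0)^2$ can be of size $\delta^{2/(b+2)}\gg\delta$ (take e.g.\ the paper's model $\phi=x^4+6x^2y+6y^2$, where $\phi_{yy}\equiv 12$ and $b=1$), so cutting only in $x$ leaves rectangles that are not $(\phi,\delta)$-flat. The paper takes $\theta=1/(k+2)$ and treats $R_0$ by the \emph{same} iterative lemma: the rescaling $\bar\phi(x,y)=\phi(x,\delta^{1/(k+2)}y)$ again lies in $\mathcal A_{k+2}$, and Lemma~\ref{lem_cylinderial_dec} alone (without the final Bourgain--Demeter step, since $\det D^2\bar\phi$ is no longer bounded below) already outputs $(\bar\phi,\delta)$-flat rectangles. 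So the fix is easy --- feed the bottom piece into the iteration too --- but as written your two constraints on $\theta$ are incompatible.
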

The main ingredient of the proof is an iterative decoupling lemma, namely, Lemma \ref{lem_cylinderial_dec}, which we will prove by induction on scales in Section \ref{sec_proof_lemma}. Before stating and proving the lemma, we return to some preliminary reductions that are used to prove Proposition \ref{prop_part2}.

We remark the reader that it is advisable to study the following two model cases. The first and easier one is
$$
\phi(x,y)=x^4+6x^2 y+6y^2,
$$
where $y$ but not $y^2$ divides $\det(D^2\phi)$. The second and harder one is
$$
\phi(x,y)=8 x^8 + 112 x^6 y + 504 x^4 y^2 + 756 x^2 y^3 + 189 y^4,
$$
where $y^5$ but not $y^6$ divides $\det(D^2\phi)$.

\subsection{The preparation stage}

\subsubsection{Dyadic decomposition}\label{sec_dyadic_part2}
Let $k\geq 0$ be the largest integer such that $y^k$ divides $\det(D^2 \phi)$. If $k=0$ then we are done, since then $\det(D^2\phi)$ is bounded below on $[1,2]\times [0,c_\phi]$ and we can apply Theorem \ref{thm_Bourgain_Demeter}. So we assume $k\geq 1$.

Dyadically decompose $[1,2]\times [0,c_\phi]$ as follows. Let $R_0=[1,2]\times [0,\Gd^{\frac 1 {k+2}}]$, and for $j\geq 1$, define
$$
R_j=[1,2]\times [2^{j-1}\Gd^{\frac 1 {k+2}},2^j \Gd^{\frac 1{k+2}}].
$$
It suffices to decouple each $R_j$ since we can afford logarithmic losses. The choice of the power of $\Gd$ will be clear later.

We will first decouple $R_j$, $j\geq 1$. The technique introduced here applies to $R_0$ in a similar and easier way.

Denote $\Gs=2^{j-1}\Gd^{\frac 1 {k+2}}\in [\Gd^{\frac 1 {k+2}},c_\phi]$, and so $R_j=[1,2]\times [\Gs,2\Gs]$.

\subsubsection{Rescaling}
Define
\begin{equation}\label{eqn_tilde_phi}
   \tilde \phi(x,y):=\phi(x,\Gs y+\Gs)-\text{linear terms}, 
\end{equation}
and so it suffices to decouple $[1,2]\times [0,1]$ into $(\tilde \phi,\Gd)$-flat rectangles. Note that each coefficient of $\tilde \phi$ is a polynomial in $\Gs$ with uniformly bounded coefficients.

We have an important observation here. Since $k$ is the largest integer such that $y^k$ divides $\det(D^2\phi)$, if $c_\phi$ defined in Section \ref{sec_choose_c} is chosen to be small enough, then using \eqref{eqn_part2}, on $[\frac 1 2,\frac 5 2]\times [\Gs,2\Gs]$ we have
\begin{equation}\label{eqn_estimates_phi}
|\phi_{xx}|\sim 1,\quad |\det(D^2\phi)|\sim \Gs^{k},\quad |\partial_x\det(D^2\phi)|=O( \Gs^{k}).
\end{equation}
Combined with \eqref{eqn_tilde_phi}, \eqref{eqn_estimates_phi} implies that on $[\frac 1 2,\frac 5 2]\times [0,1]$ we have the relations
\begin{equation*}
    |\tilde\phi_{xx}|\sim 1,  \quad |\tilde \phi_{xy}|\lesssim \Gs,\quad |\tilde \phi_{xxy}|\lesssim \Gs,
\end{equation*}
and
\begin{equation*}
    |\det D^2 \tilde\phi(x,y)| \sim \sigma^{k+2},\quad |\partial_x (\det D^2 \tilde\phi)(x,y)| \lesssim \sigma^{k+2}.
\end{equation*}

\subsection{The family $\mathcal A_l$}\label{sec_A_j}
Recall our task now is to decouple $[0,1]^2$ into $(\tilde \phi,\Gd$)-flat rectangles. The main idea for the proof is by induction, which requires a series of technical preparations.

For future purposes, let us also introduce the following family $\mathcal A_l$ of polynomials which depends only on $\phi,c_\phi,\Gs$.

\begin{defn} \label{def_A_l}
Let $0\leq \Gs\leq c_\phi$. For $l\geq 0$, we define $\mathcal A_l=\mathcal A_l(\Gs)$ to be the collection of polynomials $\tau(x,y)$ with the following properties: $\tau$ contains no linear term, all coefficients of $\tau$ are polynomials in $\Gs^{1/2}$, and $\tau$ satisfies the following derivative estimates over $[\frac 1 2,\frac 5 2]\times [0,1]$:
    \begin{equation}\label{eqn_derivatives}
    |\tau_{xx}|\sim 1,\quad | \tau_{xy}|\lesssim \Gs,\quad | \tau_{xxy}|\lesssim \Gs,
\end{equation}
and
\begin{equation}\label{eqn_partial_det}
    |\det D^2 \tau(x,y)| \lesssim \sigma^{l},\quad |\partial_x (\det D^2 \tau)(x,y)| \lesssim \sigma^{l}.
\end{equation}
where the bounds of coefficients of $\tau$ and the implicit constants depend on $\phi$ only.
\end{defn}

Note that $\tilde \phi$ belongs to $\mathcal A_{k+2}$ by definition. In the first inequality of \eqref{eqn_partial_det} we choose $\lesssim$ instead of $\sim$ since we also want to treat the decoupling of $R_0$ where $\det(D^2\phi)$ is not bounded away from zero.

\subsection{Cylindrical decoupling}
We are now ready to state the main lemma for decoupling.

\begin{lem}[Cylindrical decoupling]\label{lem_cylinderial_dec}
Let $\tau$ be a polynomial in the family $\mathcal A_l$. Then the rectangle $[1,2]\times [0,1]$ can be decoupled into $(\tau,\Gs^l)$-flat rectangles $R$. Moreover, for each such $R$, if we let $L_1,L_2$ be its two adjacent sides, then they obey the following geometric properties (see Figure \ref{fig_lem_cylinderial_dec}):
\begin{enumerate}
    \item $|L_1|\sim\Gs^{l/2}$, $|L_2|\sim 1$.
    \item The slope of $L_1$ is of the order $O(1)$. 
\end{enumerate} 
\end{lem}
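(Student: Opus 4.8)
The plan is to prove Lemma \ref{lem_cylinderial_dec} by induction on the scale, imitating a Pramanik–Seeger iteration but carried out entirely with cylindrical (one-variable) decouplings. The starting observation is that a cylindrical decoupling in the $x$-variable of the slab over $[1,2]\times[0,1]$ into vertical strips $[x_0,x_0+w]\times[0,1]$ is essentially free: since $\tau$ has no linear term and $|\tau_{xy}|\lesssim\Gs$, $|\tau_{xxy}|\lesssim\Gs$ on the slightly enlarged rectangle, along each such strip the graph of $\tau$ is, after subtracting an affine function (which does not affect decoupling), within $O(w^2 + w\Gs)$ of a one-variable graph $x\mapsto \tau(x,y_0)$; and one-variable polynomial decoupling (via \cite{Yang2}, or just the $L^4$ theory on the parabola when $|\tau_{xx}|\sim1$) lets us further split each strip in $x$ down to width $\sim (\Gs^l)^{1/2}$ while only losing $C_\Ge(\Gs^l)^{-\Ge}$. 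This produces rectangles with $|L_1|\sim\Gs^{l/2}$ horizontal and $|L_2|\sim1$ vertical, with the slope condition trivially satisfied because $L_1$ is horizontal. So the content of the lemma is \emph{flatness}: we must check that $\tau$ is indeed $\Gs^l$-flat over each such $\Gs^{l/2}\times 1$ vertical strip.

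The flatness over a strip $I\times[0,1]$ with $|I|\sim\Gs^{l/2}$ is where the real work lies, and this is exactly the step I expect to be the main obstacle, because a naive second-order Taylor estimate does \emph{not} immediately give it — the obstruction is precisely the instability of the eigenvectors of $D^2\tau$ described in the introduction. The idea is the inductive/shearing mechanism: over the strip $I=[x_0,x_0+\Gs^{l/2}]\times[0,1]$ one performs the affine change of variables that rescales $I$ to $[1,2]\times[0,1]$ (dilating $x$ by $\Gs^{-l/2}$ and recentering), together with a shear $y\mapsto y - a x$ chosen to "straighten out" the dominant eigendirection of $D^2\tau$, i.e.\ to kill the worst part of the mixed partial of the rescaled phase. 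One checks that the rescaled-and-sheared polynomial $\tau^\sharp$ still lies in $\mathcal A_l$ — the normalization $|\tau_{xx}|\sim1$ is preserved because $\tau_{xx}\sim1$ and the $x$-dilation by $\Gs^{-l/2}$ is compensated by the overall $\Gs^{-l}$ rescaling in $z$ forced by the target flatness; the bounds $|\tau_{xy}|,|\tau_{xxy}|\lesssim\Gs$ and $|\det D^2\tau|,|\partial_x\det D^2\tau|\lesssim\Gs^l$ are affine-invariant up to the constants we allow, and the shear slope $a=O(1)$ so that coefficients remain polynomials in $\Gs^{1/2}$ with bounded coefficients. This is where the specific value $\frac{1}{k+2}$ of the dyadic exponent and the membership $\tilde\phi\in\mathcal A_{k+2}$ get used: after finitely many (depending only on $\phi$, essentially the multiplicity $k$) iterations the accumulated rescaling brings the relevant scale down to $\delta$, terminating the induction.

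Concretely I would structure the induction as follows. \textbf{Base case:} when the target scale $\Gs^{l/2}$ has already reached $\delta^{1/2}$ in absolute terms, or equivalently when $|\det D^2\tau|\sim\Gs^l$ with $\Gs^l\gtrsim\delta$ uniformly, one rescales $\tau$ by $\Gs^{-l/2}$ in each variable and $\Gs^{-l}$ in $z$ to obtain a phase with Hessian determinant $\sim1$, applies Theorem \ref{thm_Bourgain_Demeter} to decouple into $\Gs^{-l/2}\delta^{1/2}\times\Gs^{-l/2}\delta^{1/2}$ squares, and rescales back. \textbf{Inductive step:} given $\tau\in\mathcal A_l$, first cylindrically decouple $[1,2]\times[0,1]$ in $x$ into the intermediate vertical strips of the largest width for which flatness is not yet clear — this is the "smallest axis-parallel rectangle" mentioned in the paper; on each such intermediate strip, apply the rescale-and-shear above to land on a new $\tau^\sharp\in\mathcal A_l$ whose eigenvectors are genuinely more stable (the shear has removed one power of the degeneracy), and invoke the inductive hypothesis on $\tau^\sharp$ to decouple its $[1,2]\times[0,1]$ into $(\tau^\sharp,\Gs^l)$-flat rectangles of the required shape; undoing the shear turns axis-parallel rectangles into parallelograms of bounded slope, hence into equivalent rectangles with $|L_1|\sim\Gs^{l/2}$, $|L_2|\sim1$ and slope $O(1)$, as claimed. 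One must track that the total number of decoupling invocations stays bounded by a constant depending only on $\phi$, so that the accumulated $C_\Ge\Gs^{-\Ge}$ losses are acceptable; this follows because each iteration strictly decreases an integer invariant (the order of vanishing of $\det D^2$ along the relevant direction), bounded by $k$.
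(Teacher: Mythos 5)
Your opening paragraph contains a genuine error that undermines the proposed structure: it is \emph{not} the case that you can cylindrically decouple $[1,2]\times[0,1]$ in the $x$-variable down to width $\Gs^{l/2}$ in one stroke. Cylindrical decoupling in $x$ at scale $\Gs^l$ requires the graph of $\tau$ over the whole slab to lie within $O(\Gs^l)$ of a cylinder over a one-variable curve $x\mapsto A(x)$, but for $\tau\in\mathcal A_l$ with $l\geq 2$ one only has $\tau(x,y)-A(x)=O(\Gs)$ (the $\Gs x^2 y D(x,y)$ term in \eqref{eqn_psi_expand} is genuinely of size $\Gs$, not $\Gs^l$). So the one-variable decoupling only takes you down to strips of width $\Gs^{1/2}$, not $\Gs^{l/2}$. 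The $\Gs^{l/2}$ width only emerges after $l$ iterations of ``cylindrically decouple to width $\Gs^{1/2}$, rescale $x$ by $\Gs^{-1/2}$, repeat.'' Misidentifying this scale causes the rest of the first paragraph (that the real content is merely verifying flatness on $\Gs^{l/2}\times 1$ strips) to miss where the difficulty actually lives: the obstruction is that the decoupling to such strips cannot be done in one pass at all.

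Your later paragraphs do gesture at the correct iterative mechanism (shear to stabilize eigenvectors, rescale, decouple again), which is indeed the paper's strategy, but two details are off in ways that matter. First, you write that the rescaled-and-sheared phase $\tau^\sharp$ lies again in $\mathcal A_l$; if that were so, no induction would close. The paper's key point (its Lemma \ref{lem_expand} applied to the $\psi$ produced by Lemma \ref{lem_shear}, followed by the anisotropic rescaling $\eta(x,y)=\Gs^{-1}\psi(\Gs^{1/2}(x-1)+b',y)$) is that the new phase drops to $\mathcal A_{l-1}$, and verifying this membership — in particular the bounds $|\eta_{xy}|,|\eta_{xxy}|\lesssim\Gs$ and $|\det D^2\eta|,|\partial_x\det D^2\eta|\lesssim\Gs^{l-1}$ — is the technical heart and is not affine-invariant bookkeeping; it uses the precise structure \eqref{eqn_psi_expand} extracted from the $O(\Gs^l)$ control of $\psi_{xy}(0,\cdot)$ in \eqref{eqn_bound_psi_xy}, which in turn requires the very specific shear constructed in Lemma \ref{lem_shear}. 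Second, the paper shears \emph{before} the cylindrical decoupling (and separately on each $(10M)^{-1}$-slab in $y$), precisely so that the post-decoupling rescaled phase inherits the $\mathcal A_{l-1}$ bounds; doing the cylindrical decoupling first and then shearing per strip, as you describe, would require you to re-derive the $\mathcal A_{l-1}$ membership for a per-strip shear, which you have not checked. Your ``integer invariant'' intuition is right — the induction is on $l$ — but as written the step that decreases it is asserted rather than established.
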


{\it Remark.} By the geometric properties given in the lemma, the rectangles $R$ covering $[1,2]\times [0,1]$ have bounded overlap, and the proportion that each $R$ exceeds $[1,2]\times [0,1]$ is of the order $O(\Gs^{l/2})$. This ensures that the covering rectangles of different dyadic layers $R_j$ will have bounded overlap.

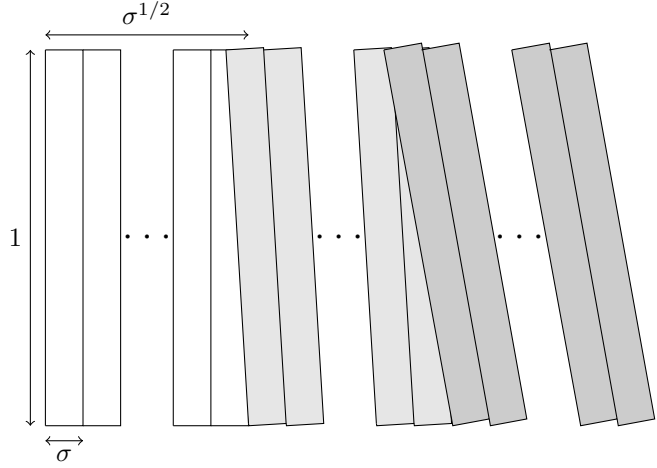
\begin{figure}[h]
    \centering    \begin{tikzpicture}
    
    \draw (0,0) -- (0,5);
    \draw (1.7,0) -- (1.7,5);
    \foreach \i in {0,0.5,1.7,2.2}
    {
    \draw (\i,0) -- (\i+0.5,0) -- (\i+0.5,5) -- (\i,5);
    }
    \node at (1.35,2.5) {\large $\cdot \cdot \cdot$};
    
    \foreach \i in {0+2.7,0.5+2.7,1.7+2.7,2.2+2.7}
    {
    \fill[gray!20] (\i,0.03) --(\i+0.005,0) -- (\i+0.5,0.03) -- (\i+0.5-0.3,5.03) -- (\i-0.3,5);
    }
    \draw[] (2.7,0.03) -- (2.7-0.3,5);
    \draw[] (1.7+2.7,0.03) -- (1.7+2.7-0.3,5);
    \foreach \i in {0+2.7,0.5+2.7,1.7+2.7,2.2+2.7}
    {
    \draw[] (\i,0.03) --(\i+0.005,0) -- (\i+0.5,0.03) -- (\i+0.5-0.3,5.03) -- (\i-0.3,5);
    }
    \node[] at (1.35+2.55,2.5) {\large $\cdot \cdot \cdot$};
    

    \foreach \i in {0+5.4,0.5+5.4,1.7+5.4,2.2+5.4}
    {
    \fill[gray!40] (\i,0.09)-- (\i+0.016,0) -- (\i+0.5,0.09) -- (\i+0.5-0.9,5.09) -- (\i-0.9,5);
    }
    \draw[] (5.4,0.09) -- (5.4-0.9,5);
    \draw[] (1.7+5.4,0.09) -- (1.7+5.4-0.9,5);
    \foreach \i in {0+5.4,0.5+5.4,1.7+5.4,2.2+5.4}
    {
    \draw[] (\i,0.09)-- (\i+0.016,0) -- (\i+0.5,0.09) -- (\i+0.5-0.9,5.09) -- (\i-0.9,5);
    }
    \node[] at (1.35+4.95,2.5) {\large $\cdot \cdot \cdot$};
    
    \draw[<->] (0,-0.2)--(0.5,-0.2);
    \node at (0.25,-0.4) {\footnotesize $\sigma$};
    \draw[<->] (-0.2,0)--(-0.2,5);
    \node at (-0.4,2.5) {\footnotesize $1$};
    \draw[<->] (0,5.2)--(2.7,5.2);
    \node at (1.35,5.5) {\footnotesize $\sigma^{1/2}$};

    \end{tikzpicture}
    \caption{An illustration when $l=2$}
    \label{fig_lem_cylinderial_dec}
\end{figure}

The next proposition can be used to deal with decoupling of $R_j$, $j\geq 1$.
\begin{prop}\label{prop_R_j}
Let $\tau\in \mathcal A_l$ and assume in addition that $|\det(D^2\tau)(x,y)|\sim \Gs^l$. Then for $\Gd\leq \Gs^{l}$, we may further decouple $[1,2]\times [0,1]$ into $(\tau,\Gd)$-flat rectangles of dimensions
$$
\Gd^{1 /2}\times \Gs^{-l/2}\Gd^{ 1 /2},
$$
with their shorter sides having bounded slope.
\end{prop}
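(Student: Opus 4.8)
The plan is to deduce this proposition directly from the Cylindrical Decoupling Lemma (Lemma \ref{lem_cylinderial_dec}) followed by a parabolic rescaling and an application of Bourgain--Demeter (Theorem \ref{thm_Bourgain_Demeter}). First I would apply Lemma \ref{lem_cylinderial_dec} to $\tau \in \mathcal A_l$: this decouples $[1,2]\times[0,1]$ into rectangles $R$ with side lengths $|L_1|\sim \Gs^{l/2}$, $|L_2|\sim 1$, where the short side $L_1$ has slope $O(1)$. Since we only need $(\tau,\Gd)$-flatness with $\Gd\leq \Gs^l$, and Lemma \ref{lem_cylinderial_dec} gives $(\tau,\Gs^l)$-flatness, this is the right starting point: on each such $R$ the function $\tau$ is already $\Gs^l$-flat, so its graph over $R$ lives in a slab of vertical thickness $\Gs^l$, and the remaining job is to subdivide $R$ further down to the scale where $\tau$ becomes $\Gd$-flat.

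Next, on each rectangle $R$ from Lemma \ref{lem_cylinderial_dec}, I would perform an affine change of variables: a shear to straighten $L_1$ to be axis-parallel (harmless since its slope is $O(1)$), a translation moving a corner to the origin, subtraction of the linear part of $\tau$ (negligible for decoupling), and then an anisotropic dilation by $\Gs^{-l/2}$ in the $L_1$-direction and by $O(1)$ in the $L_2$-direction, rescaling $R$ to a unit square $[1,2]\times[0,1]$ (up to equivalence of rectangles, as in Section \ref{sec_notation}). Let $\psi$ denote the resulting phase. The key computation is that $\det D^2\psi$, which scales like $\Gs^{-l}$ times the restriction of $\det D^2\tau$ to $R$, satisfies $|\det D^2\psi|\sim 1$ on $[1,2]\times[0,1]$; this is exactly where the hypothesis $|\det(D^2\tau)(x,y)|\sim \Gs^l$ (rather than merely $\lesssim$) is used — without the lower bound the rescaled Hessian could degenerate and Theorem \ref{thm_Bourgain_Demeter} would not apply. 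One also checks that $\psi$ has bounded $C^3$ norm, using the derivative estimates \eqref{eqn_derivatives} and \eqref{eqn_partial_det} defining $\mathcal A_l$ together with the bounded slope of $L_1$, so that the implicit constants in Bourgain--Demeter depend only on $\phi$.

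Then I would apply Theorem \ref{thm_Bourgain_Demeter} to $\psi$ on $[1,2]\times[0,1]$ at scale $\Gs^{-l}\Gd\leq 1$: this decouples into $(\psi,\Gs^{-l}\Gd)$-flat squares of side length $(\Gs^{-l}\Gd)^{1/2}$, which may be taken axis-parallel. Undoing the affine change of variables — the inverse shear preserves bounded slope, and the inverse dilation multiplies the $L_1$-direction length by $\Gs^{l/2}$ — these squares pull back to $(\tau,\Gd)$-flat rectangles of dimensions $(\Gs^{-l}\Gd)^{1/2}\cdot\Gs^{l/2}=\Gd^{1/2}$ in the (former) $L_1$-direction and $(\Gs^{-l}\Gd)^{1/2}=\Gs^{-l/2}\Gd^{1/2}$ in the (former) $L_2$-direction, with the shorter side (length $\Gd^{1/2}\leq \Gs^{-l/2}\Gd^{1/2}$) having slope $O(1)$, exactly as claimed. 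Composing the two decoupling steps costs a factor of at most $(C_\Ge\Gd^{-\Ge})^2$, absorbed into the final constant, and for the $l^2$-version (if $\tau$ is convex) one uses the $l^2$-parts of Lemma \ref{lem_cylinderial_dec} and \eqref{eqn_BD_decoupling_l2}. The main obstacle is not in this proposition itself, which is essentially bookkeeping for rescalings, but in establishing Lemma \ref{lem_cylinderial_dec} — the iterative cylindrical decoupling with shear transforms that handles the instability of the eigenvectors of $D^2\tau$; that is proved separately by induction on scales in Section \ref{sec_proof_lemma}.
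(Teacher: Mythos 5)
Your proposal follows the paper's proof essentially step for step: apply Lemma \ref{lem_cylinderial_dec} to reach the $(\tau,\Gs^l)$-flat rectangles $R$, rescale each $R$ affinely to a unit square so that the rescaled phase $\psi$ satisfies $|\det D^2\psi|\sim 1$ (which, as you correctly flag, is precisely where the two-sided bound $|\det D^2\tau|\sim\Gs^l$ is needed), invoke Theorem \ref{thm_Bourgain_Demeter}, and pull the $\Gs^{-l/2}\Gd^{1/2}$-squares back to get $\Gd^{1/2}\times\Gs^{-l/2}\Gd^{1/2}$ rectangles with short sides of bounded slope. The only cosmetic difference is that you straighten $R$ by a shear while the paper uses a rotation $\rho$; that choice works but deserves a touch more care in specifying the shear and the anisotropic dilation so that the image of the rectangle $R$ really is comparable to an axis-parallel unit square (the rotation sidesteps this entirely, since it sends the rectangle $R$ to an honest axis-parallel rectangle before the horizontal dilation).
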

We assume Lemma \ref{lem_cylinderial_dec} for now; it will be prove in Section \ref{sec_proof_lemma}.

\begin{proof}[Proof of Proposition \ref{prop_R_j}]
Apply Lemma \ref{lem_cylinderial_dec} to get $(\tau,\Gs^l)$-flat rectangles $R$ with the prescribed geometric properties.

Now fix such an $R$. We need to decouple $R$ further. Let $\rho$ be the rotation (plus a translation) that maps $R$ to an axis-parallel rectangle with the shorter side having zero slope and its lower-left corner being the origin. Define
$$
\psi(x,y):=\Gs^{-l}(\tau\circ \rho^{-1})(\Gs^{l/2}x,y)-\text{linear terms}.
$$
Then by the definition of $\mathcal A_l$, all coefficients of $\psi$ are bounded. Moreover, since $|\det(D^2 \tau)|\sim \Gs^{l}$ over $[1,2]\times [0,1]$, we have $|\det(D^2 \psi)|\sim 1$ over $[0,1]\times [0,1]$.


Hence, we may apply Theorem \ref{thm_Bourgain_Demeter} to decouple $[0,1]\times [0,1]$ into axis-parallel squares of side length $\Gs^{-l/2}\Gd^{1/2}$, on each of which $\psi$ is $\Gs^{-l}\Gd$-flat. Reversing the linear transformations, we have thus decoupled each $R$ into $(\tau,\Gd)$-flat rectangles of dimensions $\Gd^{1/2}\times \Gs^{-l/2}\Gd^{1/2}$, each with the shorter side parallel to the shorter side of $R$. Combining all rectangles $R$, we have thus decoupled $[1,2]\times [0,1]$ into $(\tau,\Gd)$-flat rectangles, and each of them has dimensions $\Gd^{1 /2}\times \Gs^{-l /2}\Gd^{ 1 /2}$, with their shorter sides having bounded slope.

\end{proof}

\subsubsection{Proof of Theorem \ref{prop_part2}}
We may now finally prove Theorem \ref{prop_part2}, while still assuming Lemma \ref{lem_cylinderial_dec}.

\begin{proof}
Recall from the last part of Section \ref{sec_dyadic_part2} that it suffices to decouple each $R_j$, $j\geq 0$.

For $j\geq 1$, we just apply Proposition \ref{prop_R_j} to $\tilde \phi\in \mathcal A_{k+2}$ to decouple $[1,2]\times [0,1]$ into $(\tilde \phi,\Gd)$-flat rectangles of dimensions $\Gd^{1 /2}\times \Gs^{-(k+2)/2}\Gd^{ 1 /2}$, with their shorter sides having bounded slope. Reversing the rescaling, we have thus decoupled $R_j$ into $(\phi,\Gd)$-flat rectangles of dimensions $$
\Gd^{1 /2}\times \Gs^{-k/2}\Gd^{ 1 /2},
$$
with their shorter sides having bounded slope.

It remains to decouple $R_0$. To this end, denote $\Gs:=\Gd^{\frac 1 {k+2}}$. If we let
$$
\overline \phi(x,y)=\phi(x,\Gs y),
$$
then similarly we can check that $\overline \phi$ lies in $\mathcal A_{k+2}$. By Lemma \ref{lem_cylinderial_dec}, we can decouple $[1,2]\times [0,1]$ into $(\overline \phi,\Gs^{k+2})$-flat rectangles $R$ with the given geometric properties. Reversing from $\overline \phi$ to $\phi$, we thus have decoupled $R_0$ into 
$(\phi,\Gs^{k+2})$-flat rectangles $R$. But since $\Gs=\Gd^{\frac 1 {k+2}}$, we have $\Gs^{k+2}=\Gd$
and thus each $R$ is already $(\phi,\Gd)$-flat, as required. Note that each $R$ has dimensions 
$$
\Gd^{ 1/ 2}\times \Gd^{1 /(k+2)},
$$
with their shorter sides having bounded slope.

\end{proof}

Therefore, all that remains is the proof of Lemma \ref{lem_cylinderial_dec}, for which we need some other lemmas below.
\subsection{Coefficient analysis}\label{sec_shear_lemma}
If we compute the eigenvectors of $\det(D^2\tilde\phi)(x,y)$ we may find that they are not necessarily axis-parallel; moreover, they keep changing as $x$ and $y$ vary. This suggests that we introduce a family of suitable shear transformations to reduce to the axis-parallel case. The technical lemma below is the key to the proof.

\begin{lem}\label{lem_shear}
Let $\tau$ be a polynomial in $\mathcal A_l$. Then there is a constant $\mu=\mu(\Gs)$ which is a polynomial in $\Gs$, such that if we define
\begin{equation}\label{eqn_defn_psi}
    \psi(x,y) =\tau(x-\mu y,y),
\end{equation}
then on $[\frac 3 4,\frac 9 4]\times [0,1]$ we have 
\begin{equation}\label{eqn_bound_psi_xx}
   |\psi_{xx}|\sim 1, \quad |\psi_{xy}|\lesssim \Gs, \quad |\psi_{xxy}|\lesssim \Gs,
\end{equation}
and
\begin{equation}\label{eqn_bound_psi_xy}
     \sup_{0\leq y_0\leq 1}|\psi_{xy}(0,y_0)|\lesssim \sigma^{l}.
\end{equation}
The upper bounds of coefficients of $\mu$ and all implicit constants here depends on $\phi$ only.
\end{lem}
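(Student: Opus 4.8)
The plan is to find $\mu$ explicitly by forcing the mixed second derivative $\psi_{xy}$ to vanish to high order along the $y$-axis. Writing $\psi(x,y)=\tau(x-\mu y,y)$ and differentiating, one computes
\[
\psi_{xy}(x,y)=\tau_{xy}(x-\mu y,y)-\mu\,\tau_{xx}(x-\mu y,y),
\]
so that at $x=0$ we have $\psi_{xy}(0,y)=\tau_{xy}(-\mu y,y)-\mu\,\tau_{xx}(-\mu y,y)$. The natural choice is to take $\mu$ to be (a truncation of) the ratio $\tau_{xy}/\tau_{xx}$ evaluated appropriately; more precisely, I would expand $\tau_{xy}$ and $\tau_{xx}$ in powers of $x$ around $x=0$ and $x=1$ (or just use that $|\tau_{xx}|\sim1$ so division is harmless), and set $\mu$ equal to the rational expression $\tau_{xy}/\tau_{xx}$ restricted to the line $x=0$, then replace it by a nearby polynomial in $\Gs$ with controlled coefficients. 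Since every coefficient of $\tau$ is a polynomial in $\Gs^{1/2}$ with $\phi$-dependent bounds, and $|\tau_{xy}|\lesssim\Gs$ while $|\tau_{xx}|\sim1$, the ratio is $O(\Gs)$; truncating its $\Gs$-expansion at a suitable order gives a genuine polynomial $\mu=\mu(\Gs)$ of size $O(\Gs)$ with $\phi$-bounded coefficients.

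With this choice, I would verify \eqref{eqn_bound_psi_xx} first: because $\mu=O(\Gs)$ is small, the shear $(x,y)\mapsto(x-\mu y,y)$ moves points by $O(\Gs)$, so it maps $[\frac34,\frac94]\times[0,1]$ into $[\frac12,\frac52]\times[0,1]$ once $c_\phi$ is small; then $|\psi_{xx}|=|\tau_{xx}(x-\mu y,y)|\sim1$ directly, and $\psi_{xy}=\tau_{xy}-\mu\tau_{xx}$ is $O(\Gs)+O(\Gs)\cdot O(1)=O(\Gs)$, while $\psi_{xxy}$ is a combination of $\tau_{xxy}$, $\mu\,\tau_{xxx}$ and derivatives of $\mu$ (which vanish — $\mu$ is a constant in $(x,y)$), all $O(\Gs)$ since $\tau_{xxx}=\partial_x\tau_{xx}=O(1)$ actually needs a little care: I would note $\tau_{xxx}$ is bounded because $\tau$ has bounded coefficients and we work on a compact set.

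The substance is \eqref{eqn_bound_psi_xy}, the bound $\sup_{y_0}|\psi_{xy}(0,y_0)|\lesssim\Gs^l$. By construction $\psi_{xy}(0,y_0)=\tau_{xy}(-\mu y_0,y_0)-\mu\,\tau_{xx}(-\mu y_0,y_0)$, and the truncation of $\mu$ was chosen so that this expression is forced to be small; the mechanism is that $\det D^2\tau=\tau_{xx}\tau_{yy}-\tau_{xy}^2=O(\Gs^l)$ together with $\partial_x(\det D^2\tau)=O(\Gs^l)$ pins down $\tau_{xy}$ (relative to $\tau_{xx}$) along $x=0$ up to an error $O(\Gs^l)$. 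Concretely, $\tau_{xy}^2=\tau_{xx}\tau_{yy}-\det D^2\tau$, and differentiating in $x$ controls how $\tau_{xy}/\tau_{xx}$ varies, so choosing $\mu$ to match the value of $\tau_{xy}/\tau_{xx}$ at one point (say $x=1$) automatically matches it along the whole segment $x=0$ up to $O(\Gs^l)$. I expect \textbf{this last estimate to be the main obstacle}: it requires carefully extracting, from the two determinant bounds in \eqref{eqn_partial_det}, the right normal form for $\tau_{xy}$ near the $y$-axis, and then checking that the polynomial truncation of $\mu$ does not destroy the $O(\Gs^l)$ gain — i.e. that the truncation error is itself $O(\Gs^l)$, which dictates how many terms of the $\Gs$-expansion of $\mu$ to keep (a number depending only on $l$, hence only on $\phi$). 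The remaining verifications are routine applications of the chain rule plus the compactness of the domain and the boundedness of the coefficients of $\tau$ and $\mu$.
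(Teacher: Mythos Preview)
Your overall plan matches the paper's: take $\mu$ to be a Taylor truncation (in $\sigma^{1/2}$) of $\tilde\mu:=(\tau_{xy}/\tau_{xx})(0,0)$, so that $\mu=O(\sigma)$ and $|\mu-\tilde\mu|=O(\sigma^l)$; the bounds \eqref{eqn_bound_psi_xx} then follow just as you describe. The substance is indeed \eqref{eqn_bound_psi_xy}, and here your proposal has a real gap.

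You write that you would use $\tau_{xy}^2=\tau_{xx}\tau_{yy}-\det D^2\tau$ and ``differentiating in $x$ controls how $\tau_{xy}/\tau_{xx}$ varies, so choosing $\mu$ to match $\ldots$ at one point automatically matches it along the whole segment''. But the segment in question is $\{(-\mu y_0,y_0):0\le y_0\le 1\}$, which is essentially vertical; what you need is control of the \emph{$y$-derivative} of $\tau_{xy}/\tau_{xx}$, not the $x$-derivative. And $\partial_y(\tau_{xy}/\tau_{xx})$ is \emph{not} $O(\sigma^l)$ by itself: from the hypotheses one only gets $\partial_y(\tau_{xy}/\tau_{xx})=O(\sigma^2)+O(\sigma^l)$, which is useless for $l\ge3$. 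The identity you wrote down does not close this.

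What the paper does---and what your plan is missing---is the following algebraic identity: for $\nu:=\tau_{xy}/\tau_{xx}$,
\[
-\nu\,\partial_x\nu+\partial_y\nu
=\frac{\partial_x(\det D^2\tau)}{\tau_{xx}^2}-\frac{\tau_{xxx}\,\det D^2\tau}{\tau_{xx}^3}
=O(\sigma^l),
\]
i.e.\ the directional derivative of $\nu$ along the eigendirection $(-\nu,1)$ is controlled by \eqref{eqn_partial_det}. Since the segment is along $(-\mu,1)$ rather than $(-\nu,1)$, there is a leftover term $(\nu-\mu)\,\partial_x\nu=O(\sigma)\cdot(\nu-\mu)$; this forces a bootstrap (Gronwall) step: writing $g(y_0)=\nu(-\mu y_0,y_0)-\mu$, one gets $|g'|\le O(\sigma)|g|+O(\sigma^l)$ with $g(0)=O(\sigma^l)$, whence $g=O(\sigma^l)$. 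Both the identity and the bootstrap are essential and are absent from your sketch; ``differentiating in $x$'' alone will not produce them.
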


\begin{proof}
Let 
$$
\tilde\mu=\frac{\tau_{xy}}{\tau_{xx}}(0,0),
$$
which is a rational function in $\Gs^{1/2}$ of $O(\Gs)$ by assumption. Let $\mu$ be the $(2l-1)$-th order Taylor polynomial of the function $\Gs^{1/2}\mapsto\tilde\mu$ at $\Gs^{1/2}=0$, which has bounded coefficients by the first relation of \eqref{eqn_derivatives}. Thus $|\mu-\tilde\mu|=O(\Gs^{l})$. For $c_\phi$ small enough we thus have the first relation of \eqref{eqn_bound_psi_xx}.

For the second relation, we compute
$$
\psi_{xy}(x,y)
    =-\mu \tau_{xx}(x-\mu y,y)+\tau_{xy}(x-\mu y,y).
$$
Using \eqref{eqn_derivatives} and $\mu=O(\Gs)$, we thus have the bound $\psi_{xy}=O(\Gs)$. The third relation in \eqref{eqn_bound_psi_xx} follows similarly.

To prove \eqref{eqn_bound_psi_xy}, let $y_0\in [0,1]$. We compute
\begin{align*}
    \psi_{xy}(0,y_0)
    &=-\mu \tau_{xx}(-\mu y_0,y_0)+\tau_{xy}(-\mu y_0,y_0)\\
    &=\tau_{xx}(-\mu y_0,y_0)\left(-\mu+\frac {\tau_{xy}}{\tau_{xx}}(-\mu y_0,y_0)\right).
\end{align*}
Since $\tau_{xx}\sim 1$ and $|\tilde\mu-\mu|=O(\Gs^{l})$, to show $|\psi_{xy}(0,y_0)|=O(\Gs^{l})$ it suffices to show that
\begin{equation}\label{eqn_sigma_to_the_l}
   \left|\frac {\tau_{xy}}{\tau_{xx}}(-\mu y_0,y_0)-\frac{\tau_{xy}}{\tau_{xx}}(0,0)\right|=O(\Gs^{l}). 
\end{equation}
For $t\in [0,1]$, define
$$
x(t)=-t\mu y_0,\quad y(t)=ty_0,
$$
and
\begin{equation}\label{eqn_def_nu}
    \nu(t)=\frac {\tau_{xy}}{\tau_{xx}}(x(t),y(t))=\frac {\tau_{xy}}{\tau_{xx}}(-t\mu y_0,ty_0),
\end{equation}
so that 
$$
\nu(1)=\frac {\tau_{xy}}{\tau_{xx}}(-\mu y_0,y_0),\quad \nu(0)=\frac{\tau_{xy}}{\tau_{xx}}(0,0)=\tilde \mu
$$
and \eqref{eqn_sigma_to_the_l} becomes $|\nu(1)-\nu(0)|=O(\Gs^{l})$. By the mean value theorem it then suffices to show that
$\sup_{t\in [0,1]} |\nu'(t)|=O(\Gs^{l})$.

To this end we compute
\begin{align*}
    \nu'(t)
    &=-\mu y_0\partial_x\left(\frac{\tau_{xy}}{\tau_{xx}}\right)(x(t),y(t))+y_0\partial_y\left(\frac{\tau_{xy}}{\tau_{xx}}\right)(x(t),y(t))\\
    &=O(\Gs^{l})-\tilde\mu y_0\partial_x\left(\frac{\tau_{xy}}{\tau_{xx}}\right)(x(t),y(t))+y_0\partial_y\left(\frac{\tau_{xy}}{\tau_{xx}}\right)(x(t),y(t))\\
    &=O(\Gs^{l})+\left[-\nu(t)y_0\partial_x\left(\frac{\tau_{xy}}{\tau_{xx}}\right)(x(t),y(t))+y_0\partial_y\left(\frac{\tau_{xy}}{\tau_{xx}}\right)(x(t),y(t))\right]\\
    &+\left[(\nu(t)-\tilde \mu)y_0\partial_x\left(\frac{\tau_{xy}}{\tau_{xx}}\right)(x(t),y(t))\right]\\
    &:=O(\Gs^{l})+I(t)+II(t).
\end{align*}
Thus
$$
\sup_{t\in [0,1]}|\nu'(t)|\leq O(\Gs^{l})+\sup_{t\in [0,1]}|I(t)|+\sup_{t\in [0,1]}|II(t)|.
$$
For $II$, using \eqref{eqn_derivatives}, we have $\partial_x(\tau_{xy}/\tau_{xx})=O(\Gs)$. Since $0\leq y_0\leq 1$ and $\tilde \mu=\nu(0)$, using mean value theorem we thus have
$$
|II(t)|=|\nu(t)-\nu(0)|\lesssim \Gs\sup_{t\in [0,1]}|\nu'(t)|.
$$
Since $\Gs\leq c_\phi$, if $c_\phi$ in Section \ref{sec_choose_c} is chosen small enough, we have $|II(t)|\leq \sup_{t\in [0,1]}|\nu'(t)|/2$. Thus we have
$$
\sup_{t\in [0,1]}|\nu'(t)|\leq O(\Gs^{l})+\sup_{t\in [0,1]}|I(t)|,
$$
and it suffices to show $|I(t)|=O(\Gs^{k+1})$. But by \eqref{eqn_def_nu} and direct computation,
\begin{align*}
    I(t)
    &=y_0\left[-\frac {\tau_{xy}}{\tau_{xx}}\partial_x\left(\frac{\tau_{xy}}{\tau_{xx}}\right)+\partial_y\left(\frac{\tau_{xy}}{\tau_{xx}}\right)\right](x(t),y(t))\\
    &=y_0\left[-\frac {\tau_{xy}}{\tau_{xx}}\frac {\tau_{xxy}\tau_{xx}-\tau_{xy}\tau_{xxx}}{\tau_{xx}^2}+\frac {\tau_{xyy}\tau_{xx}-\tau_{xy}\tau_{xxy}}{\tau_{xx}^2}\right](x(t),y(t))\\
    &=y_0\left[\frac {\tau_{xy}^2\tau_{xxx}+\tau_{xx}^2\tau_{xyy}-2\tau_{xx}\tau_{xy}\tau_{xxy}}{\tau_{xx}^3}\right](x(t),y(t))\\
    &=y_0\left[\frac {\tau_{xx}\tau_{yy}\tau_{xxx}+\tau_{xx}^2\tau_{xyy}-2\tau_{xx}\tau_{xy}\tau_{xxy}}{\tau_{xx}^3}+\frac {\tau_{xy}^2\tau_{xxx}-\tau_{xx}\tau_{yy}\tau_{xxx}}{\tau_{xx}^3}\right](x(t),y(t))\\
    &=y_0\left[\frac{\partial_x (\det D^2\tau)}{\tau_{xx}^2}-\frac{\tau_{xxx}\det D^2\tau}{\tau_{xx}^3}\right](x(t),y(t)).
\end{align*}
Using the assumptions in \eqref{eqn_derivatives} and \eqref{eqn_partial_det}, we thus have $|I(t)|=O(\Gs^{l})$, as required.
\end{proof}

The consequence of Lemma \ref{lem_shear} is as follows.
\begin{lem}\label{lem_expand}
Let $\tau\in \mathcal A_l$, and let $\psi$ be obtained from $\tau$ as in Lemma \ref{lem_shear}. Then with all linear terms removed, $\psi$ takes the form 
\begin{equation}\label{eqn_psi_expand}
    \psi(x, y)= A(x)+\Gs^{l} (B(y)+xC(y)) + \Gs x^2y D(x,y),
\end{equation}
where $A,B,C,D$ are polynomials with bounded coefficients, and $|A''(x)|\sim 1$ for $x\in [\frac 3 4,\frac 9 4]$.
\end{lem}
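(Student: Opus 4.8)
\emph{Proof proposal.} The plan is a coefficient bookkeeping argument. Expand $\psi(x,y)=\tau(x-\mu y,y)=\sum_{a,b}c_{ab}x^ay^b$ and discard the two degree-one monomials $c_{10}x,c_{01}y$ (the ``linear terms'' in the statement). The surviving monomials split into exactly four blocks according to $(a,b)$: the pure-$x$ part ($b=0$) becomes $A(x)$; the part with $a\ge 2,\ b\ge 1$ becomes $\Gs x^2y\,D(x,y)$; the part with $a=1,\ b\ge1$ becomes $\Gs^{l}x\,C(y)$; and the pure-$y$ part of degree $\ge 2$ becomes $\Gs^{l}B(y)$. Two routine inputs are used throughout. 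First, $\tau$ has degree bounded by a constant depending only on $\phi$ and all its coefficients are polynomials in $\Gs^{1/2}$ with $\phi$-dependent bounds (this is built into the definition of $\mathcal A_l$), and $\mu$ is likewise a polynomial in $\Gs$ with bounded coefficients (Lemma \ref{lem_shear}); hence the same holds for $\psi$ and for every $c_{ab}$. Second, on the finite-dimensional space of two-variable polynomials of degree $\le N$ the sup-norm over any fixed box with nonempty interior is comparable, with constants depending on $N$ only, to the largest-coefficient norm; consequently an estimate of the shape ``some derivative of $\psi$ is $O(\Gs^{t})$ on $[\tfrac34,\tfrac94]\times[0,1]$'' forces each coefficient of that derivative, hence a whole block of the $c_{ab}$'s, to be $O(\Gs^{t})$, and a polynomial in $\Gs^{1/2}$ of bounded degree that is $O(\Gs^{t})$ as $\Gs\to0$ equals $\Gs^{t}$ times a polynomial in $\Gs^{1/2}$ with comparably bounded coefficients, which is what lets us factor $\Gs^{t}$ out of a block while keeping bounded coefficients.

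Carrying this out: (i) $A(x)=\sum_a c_{a0}x^a$ has bounded coefficients since $\psi$ does, and $A''(x)=\psi_{xx}(x,0)$, so the first relation of \eqref{eqn_bound_psi_xx} gives $|A''(x)|\sim1$ on $[\tfrac34,\tfrac94]$. (ii) Writing $\psi_{xy}=\sum_{a\ge1,b\ge1}ab\,c_{ab}x^{a-1}y^{b-1}$ and using $|\psi_{xy}|\lesssim\Gs$ on $[\tfrac34,\tfrac94]\times[0,1]$ (second relation of \eqref{eqn_bound_psi_xx}) with the norm comparison, we get $c_{ab}=O(\Gs)$ for all $a,b\ge1$; collecting those with $a\ge2$ gives $\Gs x^2yD(x,y)$ with $D$ bounded. (iii) Setting $x=0$ in $\psi_{xy}$ leaves $\sum_{b\ge1}b\,c_{1b}y^{b-1}$, which by \eqref{eqn_bound_psi_xy} is $O(\Gs^{l})$ uniformly on $[0,1]$; hence $c_{1b}=O(\Gs^{l})$ and the $a=1$ monomials assemble into $\Gs^{l}xC(y)$ with $C$ bounded. (iv) The pure-$y$ block $\sum_{b\ge2}c_{0b}y^b$ has second $y$-derivative $\psi_{yy}(0,y)$; since the shear preserves the Hessian determinant, $\det D^2\psi(x,y)=(\det D^2\tau)(x-\mu y,y)$, and $\det D^2\tau$ is a polynomial of bounded degree with bounded coefficients that is $O(\Gs^{l})$ on $[\tfrac12,\tfrac52]\times[0,1]$, so by the norm comparison it is $O(\Gs^{l})$ on a fixed neighbourhood of $\{0\}\times[0,1]$ as well. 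Feeding $|\det D^2\psi(0,\cdot)|\lesssim\Gs^{l}$ and $|\psi_{xy}(0,\cdot)|^2\lesssim\Gs^{2l}$ into $\psi_{xx}\psi_{yy}=\det D^2\psi+\psi_{xy}^2$ and dividing by $\psi_{xx}$ (which is $\sim1$, via \eqref{eqn_bound_psi_xx}) yields $|\psi_{yy}(0,\cdot)|\lesssim\Gs^{l}$, whence $c_{0b}=O(\Gs^{l})$ for $b\ge2$ and this block is $\Gs^{l}B(y)$ with $B$ bounded. Summing the four blocks gives \eqref{eqn_psi_expand}.

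The main obstacle is block (iv). For blocks (i)--(iii) the relevant $c_{ab}$ are read off directly from $\psi_{xx}$ and $\psi_{xy}$ on the box where Lemma \ref{lem_shear} applies, but the pure-$y$ coefficients carry no $x$ and cannot be seen by any mixed $x$-derivative, so their bound must be routed through the Hessian determinant. Concretely one must (a) know the $\mathcal A_l$-bound on $\det D^2\tau$ not only on $[\tfrac12,\tfrac52]\times[0,1]$ but on a neighbourhood of the $y$-axis, which is where the polynomial structure and the norm comparison enter, and (b) divide by $\psi_{xx}$, for which the lower bound $|\psi_{xx}|\sim1$ must be propagated near the $y$-axis. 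Everything else is the bookkeeping above together with the two routine facts about polynomials of bounded degree recorded in the first paragraph.
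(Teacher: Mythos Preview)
Your argument is the paper's own, presented with more explicit coefficient bookkeeping: the paper writes $\psi$ (modulo some typos in the labels) as $b(y)+xc(y)+x^{2}a(x,y)$, reads off $|c'(y)|\lesssim\sigma^{l}$ from \eqref{eqn_bound_psi_xy}, obtains $|\psi_{yy}(0,y)|\lesssim\sigma^{l}$ from the Hessian-determinant identity together with $|\psi_{xx}|\sim1$, and handles the $x^{2}y$ block via $|\psi_{xxy}|\lesssim\sigma$ --- exactly your blocks (iii), (iv), (ii). The one point you flag without resolving, namely propagating the \emph{lower} bound $|\psi_{xx}|\sim1$ from $[\tfrac34,\tfrac94]\times[0,1]$ to the line $x=0$, is glossed over in the paper as well; your norm-comparison principle transports upper bounds but not lower bounds, so strictly speaking this division step needs either a separate argument or a recentring of the expansion at a base point inside the box.
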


\begin{proof}
There are unique polynomials $a,b,c$ such that $\psi$ takes the form
$$
\psi(x,y)=a(x)+xb(y)+x^2 c(x,y).
$$
Using \eqref{eqn_bound_psi_xy} which we just established, we have $c'(y)=O(\Gs^{l})$ for all $0\leq y\le 1$. Since $\psi$ does not have linear terms, all coefficients of $c(y)$ are of the order $O(\Gs^{l})$.

Since $\psi$ is obtained from $\tau$ by composition with a shear transform, for all $0\leq y\leq 1$ we have
$$
\Gs^l\gtrsim |\det(D^2\psi)(0,y)|=|\psi_{xx}(0,y)\psi_{yy}(0,y)-\psi_{xy}(0,y)^2|.
$$
Using the second relation of \eqref{eqn_bound_psi_xx} this forces to $\sup_{0\leq y\leq 1}|\psi_{yy}(0,y)|\lesssim \Gs^l$, which implies that all coefficients of $b''$ are of the order $O(\Gs^{l})$. Since $\psi$ does not have a linear term, all coefficients of $b( y)$ are of the order $O(\Gs^{l})$.

Next, we may write $c(x,y)=d(x)+ye(x,y)$. Using \eqref{eqn_bound_psi_xx} on $y=0$ shows that $|d''(x)|\sim 1$ for $x\in [\frac 3 4,\frac 9 4]$. Also, by the third relation of \eqref{eqn_derivatives}, we see that all coefficients of $e(x,y)$ are of the order $O(\Gs)$. 

Therefore, renaming the terms we can rewrite $\psi$ in the form of \eqref{eqn_psi_expand}.
\end{proof}

\subsection{Proof of Lemma \ref{lem_cylinderial_dec}}\label{sec_proof_lemma}
Now we are ready to prove the main Lemma \ref{lem_cylinderial_dec}.

\begin{proof}
The proof is by induction on $l$. If $l=0$ then we have nothing to do, since then the whole $[1,2]\times [0,1]$ is $(\tau,1)$-flat.

Now assume the lemma holds for some $l-1\geq 0$, and we aim to prove it for $l$.

Start with a polynomial $\tau\in \mathcal A_{l}$ obeying all the assumptions.

Let $M=M_\phi\geq 100$ be an upper bound for all $|\mu|$ arising from Lemma \ref{lem_shear}. To decouple $[1,2]\times [0,1]$, it suffices to decouple each sub-rectangle of the form $[1,2]\times [b,b+(10M)^{-1}]$. We will only prove the case $b=0$ since the other cases follow from a simple translation.

\subsubsection{Shear and lower dimensional decoupling}
Apply Lemma \ref{lem_expand} to get the function $\psi$ obeying \eqref{eqn_psi_expand}.

Then we apply a lower dimensional decoupling at the scale $\Gs$. By Lemma \ref{lem_expand} we have $\psi(x,y)-A(x)=O(\Gs)$ where $A''(x)\sim 1$ for $x\in [\frac 3 4,\frac 9 4]$, and hence we may apply lower dimensional decoupling to $\psi$ to decouple $[\frac 3 4,\frac 9 4]\times [0,(10M)^{-1}]$ into rectangles of the form 
$$
[b',b'+\Gs^{1/2}]\times [0,(10M)^{-1}].
$$
We will further decouple each such rectangle into $(\psi,\Gs^{l})$-flat rectangles.

\subsubsection{Translation and rescaling}
Define
\begin{equation}\label{eqn_defn_eta}
    \eta(x,y):=\Gs^{-1} \psi(\Gs^{1/2}(x-1)+b',y)-\text{linear terms}.
\end{equation}
We now show that $\eta$ lies in $\mathcal A_{l-1}$. Using \eqref{eqn_psi_expand}, we see $\eta$ has uniformly bounded coefficients. Also, by \eqref{eqn_bound_psi_xx} and \eqref{eqn_psi_expand}, we have \eqref{eqn_derivatives} for all $(x,y)\in [\frac 1 2,\frac 5 2]\times [0,(10M)^{-1}]$ for $c_\phi$ small enough. Since translations and shear transforms have determinant $1$, it is direct to see that every relation in \eqref{eqn_partial_det} also holds.

\subsubsection{Applying induction hypothesis}
Apply the induction hypothesis to $\eta$ to decouple the rectangle $[1,2]\times [0,(10M)^{-1}]$ into $(\eta,\Gs^{l-1})$-flat rectangles with dimensions $\Gs^{(l-1)/2}\times 1$,
and with the slope of the shorter sides of the order $O(1)$.

Now reversing the rescaling and combining the rectangles we obtained for each $b'$, we have thus decoupled $[\frac 3 4,\frac 9 4]\times [0,(10M)^{-1}]$ into
$(\psi,\Gs^{l})$-flat rectangles, each with dimensions $\sim\Gs^{l/2}\times 1$ and with the shorter sides having bounded slope. Reversing the shear transformation and using $|\mu|\leq M$, we thus have in particular decoupled $[1,2]\times [0,(10M)^{-1}]$ into $(\tau,\Gs^{l})$-flat parallelograms. By the geometry prescribed in the lemma, we may slightly extend each such parallelogram in the $y$-direction to make it become a rectangle. It is then easy to see that these rectangles satisfy the prescribed geometric properties at step $l$, and hence we have successfully established the induction step.
\end{proof}

\section{Decoupling curved neighbourhoods: Part I} \label{sec_dec_cur_p1}

In this section we deal with Case \descref{(B1)} introduced in Section \ref{sec_classification}. We again only consider the part in the first quadrant. 

If $s=r$ then the curve $x^s-\Gl_j y^r=0$ is a straight line, which by rotation reduces to Task \descref{(A)}. Thus, without loss of generality, we may assume that $s>r$. 

In this section, we fix $j$ and write $\lambda=\lambda_j>0$. We define $\gamma:=\{(x,y):y=\lambda^{-1/r}x^{s/r},x>0\}$. By abuse of notation, we also write $\gamma(x) =\lambda^{-1/r} x^{s/r}$.  Given intervals $I$ and $J$, denote 
$$R(I,J):=\{(x,y)\in \R^2 : x \in I, y-\gamma(x) \in J\}.$$ Our task is to prove the following:

\begin{prop}\label{prop_sc4}
Let $\phi(x,y)$ be a mixed-homogeneous polynomial with a factor $(x^s-\Gl y^r)^2$.  Then $R([1,2],[0,c_{\phi}])$ can be decoupled into $(\phi,\delta)$-flat rectangles. 
\end{prop}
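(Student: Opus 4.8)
\emph{Proof proposal.} The plan is to follow the strategy of Case \descref{(A1)} (Section~\ref{sec_axis_1}), inserting one preliminary decoupling to cope with the curvature of $\Gg$. First I would let $k\ge2$ be the largest integer with $(x^s-\Gl y^r)^k\mid\phi$ and write $\phi=(x^s-\Gl y^r)^kP_0$, where $P_0$ collects the remaining powers of $x$, $y$, the other factors $x^s-\Gl_{j'}y^r$ and the factor $P(x^s,y^r)$; on $R([1,2],[0,c_\phi])$ with $c_\phi$ small, $P_0$ is bounded above and below, and since $\Gl\Gg(x)^r=x^s$ one has $x^s-\Gl y^r=-(y-\Gg(x))\,h(x,y)$ with $h\sim1$ there, so $|x^s-\Gl y^r|\sim\mathrm{dist}((x,y),\Gg)$ and hence $|\phi|\sim\mathrm{dist}((x,y),\Gg)^k$. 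I would also record the analogue of the Hessian computation of Case \descref{(A1)}: the polynomial $\det D^2\phi$ is divisible by $(x^s-\Gl y^r)^{2k-3}$ (one factor beyond the naive $2k-4$, from a cancellation of leading terms), and the mixed-homogeneous quotient $S_0:=\det D^2\phi/(x^s-\Gl y^r)^{2k-3}$ does not vanish on $\Gg$, so for $c_\phi$ small $|\det D^2\phi|\sim\mathrm{dist}((x,y),\Gg)^{2k-3}$ on $R([1,2],[0,c_\phi])$. The picture is thus that of Case \descref{(A1)} with ``$y$'' replaced by ``$\mathrm{dist}$ to $\Gg$''.

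Next I would dyadically decompose in the distance to $\Gg$: set $R_0=R([1,2],[0,\Gd^{1/k}])$ and $R_j=R([1,2],[2^{j-1}\Gd^{1/k},2^j\Gd^{1/k}])$ for $1\le j\lesssim\log\Gd^{-1}$, so that, by our tolerance of logarithmic losses, it suffices to decouple each $R_j$. On $R_0$ one has $|\phi|\lesssim\Gd$, so $\phi$ is already $\Gd$-flat there; hence, by cylindrical (Fubini) decoupling --- valid for any $\phi$ --- it is enough to decouple the planar region $R_0$, a one-sided $\Gd^{1/k}$-neighbourhood of the curvature-$\sim1$ curve $\Gg$, into $\Gd^{1/(2k)}\times\Gd^{1/k}$ caps; this is a classical $\R^2$ decoupling (alternatively it follows from the univariate results \cites{BGLSX,Demeter2020,Yang2} applied to $\Gg$), and each cap is a $(\phi,\Gd)$-flat rectangle. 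For $R_j$ with $j\ge1$, writing $\Gs=2^{j-1}\Gd^{1/k}\in[\Gd^{1/k},c_\phi]$ (so that $|\phi|\sim\Gs^k$ and $|\det D^2\phi|\sim\Gs^{2k-3}$ on $R_j$), I would first apply the same cylindrical plus $\R^2$-curve decoupling at scale $\Gs$ to split $R_j$ into $O(\Gs^{-1/2})$ parallelograms $\Pi$ of dimensions $\sim\Gs^{1/2}\times\Gs$, each essentially tangent to $\Gg$. The reason for this step is that over each such $\Pi$ the curve $\Gg$ deviates from its tangent line by only $O(\Gs)$, no more than the thickness of the strip, so that on $\Pi$ we may replace $\Gg$ by a straight line after an affine change of coordinates.

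Then, on a fixed $\Pi$ with point of tangency $x_0$, I would compose $\phi$ with the determinant-one shear making the tangent line at $x_0$ horizontal and $\Pi$ axis-parallel, rescale the enclosing $\sim\Gs^{1/2}\times\Gs$ rectangle to unit size and the vertical variable by $\Gs^k$, obtaining a phase $\Psi(u,v)=\Gs^{-k}\phi(\cdots\Gs^{1/2}u\cdots,\cdots\Gs v\cdots)$ on an $O(1)$ region. As in Case \descref{(A1)}, one checks that $\Psi$ has bounded coefficients, that $\Psi=(v-g(u))^k\widetilde P$ with $g$ of bounded $C^\infty$-norm and $g''\sim1$ and with $\widetilde P$ smooth and $\sim1$, and that on the image of $\Pi\cap R_j$, where $|v-g(u)|\sim1$, one has $|\det D^2\Psi|=\Gs^{3-2k}|\det D^2\phi|\sim\Gs^{3-2k}\Gs^{2k-3}=1$; thus Theorem~\ref{thm_Bourgain_Demeter} decouples this region into $(\Psi,\Gs^{-k}\Gd)$-flat squares of side $\sim(\Gs^{-k}\Gd)^{1/2}$. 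Reversing the rescaling and the shear turns each square into a $(\phi,\Gd)$-flat rectangle of dimensions $\sim\Gs^{(1-k)/2}\Gd^{1/2}\times\Gs^{(2-k)/2}\Gd^{1/2}$ inside (a slight enlargement of) $\Pi$; combining over all $\Pi$ and all $j$ then decouples $R([1,2],[0,c_\phi])$, with bounded overlap since the $R_j$ are dyadic and, as in the remark after Lemma~\ref{lem_cylinderial_dec}, the families inside distinct $\Pi$ essentially do not overlap. The $l^2(L^4)$ statement in the convex case would follow the same way, using the $l^2$ forms of Theorem~\ref{thm_Bourgain_Demeter}, of the planar curve decoupling, and of cylindrical decoupling.

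The step I expect to be the main obstacle is exactly the curvature of $\Gg$: since $\Gg$ cannot be straightened by an affine map one cannot rescale it directly, which forces the preliminary cylindrical decoupling into $\Gs^{1/2}$-arcs; the genuinely delicate point is then the bookkeeping showing that, after the tangent-line shear and the anisotropic $(\Gs^{1/2},\Gs,\Gs^k)$-rescaling, the residual bending of $\Gg$ is harmless and the factorised phase $\Psi=(v-g(u))^k\widetilde P$ still satisfies $|\det D^2\Psi|\sim1$ on the relevant region. This is the analogue of checking that ``$S(x,0)$ is never zero'' in Case \descref{(A1)}, made more subtle by the fact that the distinguished factor of $\phi$ cuts out the curve $\Gg$ rather than a coordinate axis --- which is also what turns the order of vanishing of $\det D^2\phi$ along $\Gg$ into $2k-3$ rather than $2k-2$.
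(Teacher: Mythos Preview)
Your plan is essentially the paper's own argument: dyadic decomposition in $\mathrm{dist}((x,y),\Gg)$, a preliminary cylindrical decoupling along $\Gg$ at scale $\Gs$ (the paper's Proposition~\ref{prop_curve_neig_dec}), a tangent-line shear $T_{x_0}$ on each resulting $\Gs^{1/2}\times\Gs$ piece, and then the anisotropic $(\Gs^{1/2},\Gs,\Gs^k)$ rescaling followed by Theorem~\ref{thm_Bourgain_Demeter}. The paper packages the crucial boundedness of the rescaled phase as the estimates $|\psi^{(x_0)}_{xx}|\lesssim\Gs^{k-1}$, $|\psi^{(x_0)}_{yy}|\sim\Gs^{k-2}$, $|\det D^2\psi^{(x_0)}|\sim\Gs^{2k-3}$ in Lemma~\ref{lem_sc4_estimates}, proved via the auxiliary function $\psi^{aux}(x,y)=\phi(x,y+\Gg(x))=y^kQ(x^{1/r},y)$; your factorisation $\Psi=(v-g(u))^k\tilde P$ with $g''\sim1$ and $\tilde P\sim1$ is the same computation read differently, and either route gives $|\det D^2\Psi|\sim1$ with bounded $C^3$ norm. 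Two small points to tighten: first, the paper does not use a factor-of-$2$ dyadic step but $c_{dya}=1+1/C_{rs}$ precisely so that the parallelogram approximating each cap stays inside a region where $\mathrm{dist}(\cdot,\Gg)\sim\Gs$ (otherwise the bending $\sim\Gg''\Gs$ can push you across $\Gg$); second, your claim for $R_0$ that ``$|\phi|\lesssim\Gd$, so $\phi$ is already $\Gd$-flat there'' is not correct as stated --- $|\phi|\lesssim\Gd$ does not by itself give $\Gd$-flatness on $R_0$ (the first-order term $\nabla\phi(u)\cdot(v-u)$ can be of size $\Gd^{(2k-1)/(2k)}\gg\Gd$), and what is actually needed is that each \emph{cap} is $(\phi,\Gd)$-flat, which follows from the same second-derivative estimates (equivalently, your factorisation) that you use for $R_j$, exactly as the paper does.
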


The decoupling of $R([1,2],[-c_{\phi},0])$ is similar.

\subsection{Dyadic decomposition} Let $k \geq 2$ be the largest integer such that $(x^s-\lambda y^r)^k$ divides $\phi$. By Proposition \ref{prop_curve_2k-3}, we have $|\det D^2 \phi| \sim (x^s-\lambda y^r)^{2k-3}$ on $R([1,2],[0,c_{\phi}])$ for small enough $c_\phi$. We ``dyadically'' decompose $R([1,2],[0,c_{\phi}])$ as follows. Let $C_{rs}$ be a constant depending only on $r,s,\lambda$ to be determined and 
\begin{equation}\label{eqn_c_dya}
    c_{dya} = 1+\frac{1}{C_{rs}}.
\end{equation}
Let $R_0 = R([1,2],[0,\delta^{\frac{1}{k}}])$. Define
$$
R_j = R([1,2], [c_{dya}^{j-1}\delta^{\frac{1}{k}},c_{dya}^{j}\delta^{\frac{1}{k}}]).
$$
It suffices to decouple each $R_j$ since we can can afford logarithmic losses. The choice of the power of $\delta$ will be clear later.

\subsection{Preliminary decoupling} We will first decouple $R_j, j\geq1$. Denote $\sigma = c_{dya}^{j-1}\delta^{\frac{1}{k}}/{C_{rs}}$ We start by decoupling the curved neighbourhood into rectangles by the following proposition.

\begin{prop}\label{prop_curve_neig_dec}
Let $\phi(x,y)$ be a smooth function. Then $R([1,2],[0,\sigma])$ can be decoupled into curved regions $\{R(I,[0,\sigma]):I\in \mathcal I\}$, where $\mathcal I$ is defined by
$$
\mathcal I=\{[1+j'\sigma^{1/2},1+(j'+1)\sigma^{1/2}],j'=0,...,\sigma^{-1/2}\}.
$$
\end{prop}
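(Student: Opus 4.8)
The plan is to recognise this proposition as a \emph{cylindrical} $L^{4}$-decoupling along the convex base curve $\gamma$, and to obtain it from the classical Fefferman--C\'ordoba bi-orthogonality argument. This will in fact produce the stronger $l^{2}(L^{4})$ estimate with an \emph{absolute} constant (no $\delta^{-\varepsilon}$ loss), and will use nothing about $\phi$ beyond $\supp\hat f\sub R([1,2],[0,\sigma])\times\R$, so the smoothness of $\phi$ plays no role. First the geometry: on $[1,2]$ we have $\gamma(x)=\lambda^{-1/r}x^{s/r}$ with $s>r$, so $\gamma'$ and $\gamma''$ are both comparable to positive constants there; hence $\gamma|_{[1,2]}$ is uniformly convex, and over each $I\in\mathcal I$ (all of length $\sigma^{1/2}$) the arc of $\gamma$ deviates from its tangent line by $O(\sigma)$, matching the vertical thickness $\sigma$. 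Thus $\mathcal I$ is precisely the curvature-adapted cap decomposition of the $\sigma$-neighbourhood of $\gamma$ at scale $\sigma$: the $R(I,[0,\sigma])$ boundedly overlap and cover $R([1,2],[0,\sigma])$, and each is contained in a tilted $\sigma^{1/2}\times O(\sigma)$ parallelogram.

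Writing $f=\sum_{I\in\mathcal I}f_{I}$ with $f_{I}:=f_{R(I,[0,\sigma])}$, I would expand
$$
\norm{f}_{L^{4}(\R^{3})}^{4}=\norm{|f|^{2}}_{L^{2}(\R^{3})}^{2}=\norm{\sum_{I,I'\in\mathcal I}f_{I}\overline{f_{I'}}}_{L^{2}(\R^{3})}^{2},
$$
and note that by Plancherel the Fourier transform of $f_{I}\overline{f_{I'}}$ is supported in $\bigl(R(I,[0,\sigma])-R(I',[0,\sigma])\bigr)\times\R$. I would then split the double sum into \emph{near} pairs, those whose indices $j',k'$ in the progression defining $\mathcal I$ satisfy $|j'-k'|\le C_{0}$ for an absolute $C_{0}$, and \emph{far} pairs. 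For the near pairs, Minkowski's inequality in $L^{2}$ together with $\norm{f_{I}\overline{f_{I'}}}_{L^{2}}\le\norm{f_{I}}_{L^{4}}\norm{f_{I'}}_{L^{4}}$ and a second Cauchy--Schwarz over $I$ give, for each of the $O(1)$ admissible offsets, $\norm{\sum_{I}f_{I}\overline{f_{I'}}}_{L^{2}}\lesssim\sum_{I}\norm{f_{I}}_{L^{4}}^{2}$. For the far pairs I would invoke the overlap lemma: the plane sets $R(I,[0,\sigma])-R(I',[0,\sigma])$, taken over all pairs with $|j'-k'|>C_{0}$, are boundedly overlapping. This is the only place curvature enters; it holds because the Gauss map is non-degenerate away from the diagonal --- the Jacobian of $(t_{1},t_{2})\mapsto\bigl(t_{1}-t_{2},\gamma(t_{1})-\gamma(t_{2})\bigr)$ equals $\gamma'(t_{1})-\gamma'(t_{2})\sim|t_{1}-t_{2}|\gtrsim\sigma^{1/2}$ on this range, which is comparable to the cap scale, so the images of the cap-products overlap boundedly (the usual ``a difference of two non-adjacent arcs of a uniformly convex curve essentially determines the two arcs'' estimate); passing to the $(x,y)$-projection then shows the three-dimensional Fourier supports overlap boundedly as well.

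Granting the overlap lemma, Plancherel gives $\norm{\sum_{\mathrm{far}}f_{I}\overline{f_{I'}}}_{L^{2}}^{2}\lesssim\sum_{I,I'}\norm{f_{I}f_{I'}}_{L^{2}}^{2}\le\bigl(\sum_{I}\norm{f_{I}}_{L^{4}}^{2}\bigr)^{2}$; combining with the near-pair bound yields $\norm{f}_{L^{4}(\R^{3})}^{4}\lesssim\bigl(\sum_{I\in\mathcal I}\norm{f_{R(I,[0,\sigma])}}_{L^{4}(\R^{3})}^{2}\bigr)^{2}$, i.e.\ the $l^{2}(L^{4})$ decoupling with an absolute constant, which a fortiori (H\"older) gives the asserted decoupling of $R([1,2],[0,\sigma])$ into the curved regions $R(I,[0,\sigma])$ with loss $\#\mathcal I^{1/4}$. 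The main --- but essentially routine --- obstacle is verifying the overlap lemma for the far cap-differences with the specific progression $\mathcal I$ and the one-sided band $[0,\sigma]$; the remaining ingredients (Plancherel, Cauchy--Schwarz, the triangle inequality over $O(1)$ near-diagonals) are standard. The same computation can also be run entirely on the physical side, estimating $\norm{f}_{L^{4}}^{2}=\norm{|f|^{2}}_{L^{2}}$ through the geometry of the tubes dual to the caps, which is presumably the physical-space argument alluded to in Section \ref{sec_notation}.
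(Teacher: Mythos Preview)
Your argument is correct: bi-orthogonality works directly in three variables because the Fourier supports of the $f_{I}$ are cylinders $R(I,[0,\sigma])\times\R$, so the overlap analysis of the difference sets $\supp\widehat{f_{I}\overline{f_{I'}}}$ projects to the plane, where the standard convex-curve estimate applies. The paper, however, takes a different and more modular route. Rather than running bi-orthogonality in $\R^{3}$, it \emph{freezes} the third physical variable $w$, observes that the slice $g^{(w)}(u,v):=f(u,v,w)$ has two-dimensional Fourier support in $R([1,2],[0,\sigma])$ independently of $w$, applies the two-dimensional $l^{2}(L^{4})$ decoupling for the convex curve $\gamma$ to each slice as a black box, verifies that the Fourier restriction $g^{(w)}_{I}$ coincides with $f_{R(I,[0,\sigma])}(\cdot,\cdot,w)$, and then integrates in $w$ via Minkowski's inequality (using $2\le 4$). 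This ``projection'' argument would lift \emph{any} planar decoupling inequality --- at any exponent, with whatever loss --- to its cylindrical analogue in $\R^{3}$; your direct bi-orthogonality is specific to $L^{4}$ but is self-contained and delivers the absolute constant without quoting an external result. At $L^{4}$ both proofs ultimately rest on the same planar overlap geometry you described, and neither uses $\phi$.
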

The following simple proof works regardless of the function $\phi$ and the scale $\Gd$. It manifests a general ``projection property" of decoupling.
\begin{proof}
We will prove the $l^2$-inequality first. Let $f$ be Fourier supported on $\mathcal N^\phi_{\Gd}(R([1,2]\times [0,\Gs]))$. By definition
$$
\norm {f}_{L^4(\R^3)}=\left(\iiint |f(u,v,w)|^4 dudvdw\right)^{1/4}. 
$$
For each $w$, write $g^{(w)}(u,v):=f(u,v,w)$. Then $\widehat
{g^{(w)}}$ is supported on $R([1,2]\times [0,\Gs])\sub \R^2$. Since the curve $\gamma$ has nonvanishing curvature, we may apply $l^2$-lower dimensional decoupling to get
\begin{equation}\label{eqn_projection}
    \left(\iint |g^{(w)}(u,v)|^4 dudv\right)^{\frac 1 4}\lesssim_\Ge \Gs^{-\Ge} \left(\sum_{I\in \mathcal I}\norm{g_I^{(w)}}^2_{L^4(\R^2)}\right)^{\frac 1 2},
\end{equation}
where the implicit constant is independent of $w$, and $g_I^{(w)}$ denotes the Fourier restriction of $g^{(w)}$ to the strip $I\times \R$. But by direct computation,
\begin{align*}
    \widehat {g_I^{(w)}}(x,y)
    &:=\widehat g^{(w)}(x,y)1_I(x)\\
    &=1_I(x)\iint  f(u,v,w)e^{-2\pi i(ux+vy)} dudv \\
    &=1_I(x) \int \hat f(x,y,z)e^{2\pi i w z}dz\\
    &= 1_{R(I,[0,\Gs])}(x,y)\int \hat f(x,y,z)e^{2\pi i w z}dz,
\end{align*}
where in the last line we have used the Fourier support of $f$. This proves that 
$$
g_I^{(w)}=f_{R(I,[0,\Gs])}(u,v,w),
$$
where $f_{R(I,[0,\Gs])}$ denotes the Fourier restriction of $f$ to the strip $R(I,[0,\Gs])\times \R$.

We then compute
\begin{align*}
    \norm{f}_{L^4(\R^3)}
    &\lesssim_\Ge \Gs^{-\Ge}\left(\int\left(\sum_{I\in \mathcal I}\norm{g_I^{(w)}}^2_{L^4(\R^2)}\right)^{\frac 1 2 \cdot 4} dw\right)^{\frac 1 4}\\
    (\text{by Minkowski's inequality})&\leq \Gs^{-\Ge}\left(\sum_{I\in \mathcal I}\left(\int\norm{g_I^{(w)}}^4_{L^4(\R^2)} dw\right)^{\frac 1 4 \cdot 2}\right)^{\frac 1 2}\\
    &=\Gs^{-\Ge}\left(\sum_{I\in \mathcal I}\norm{f_{R(I,[0,\Gs])}}_{L^4(\R^3)}^2\right)^{\frac 1 2}.
\end{align*}
The $l^4$-inequality is even easier, as can be seen by integrating the 4th power of \eqref{eqn_projection} in the $w$ variable.

\end{proof}

By Proposition \ref{prop_curve_neig_dec}, $R_j$ can be decoupled into $R([1+j'\sigma^{1/2},1+(j'+1)\sigma^{1/2}],[C_{rs}\sigma,(C_{rs}+1)\sigma])$, $1\leq j'\leq \Gs^{-1/2}$. Write $x_0 =1+j'\sigma^{1/2} \in [1,2]$. It suffices to decouple $R([x_0,x_0+\sigma^{1/2}],[C_{rs}\sigma,(C_{rs}+1)\sigma])$. As in Figure \ref{fig:approx_parallelogram} below, we approximate the region by a parallelogram $\Tilde{R}$ given by
$$
\tilde R:=\{ (x_0, \gamma(x_0)+C_{rs}\sigma) + u(0,1) + v(1,\gamma'(x_0)) : 0\leq u \leq C_{rs}\sigma, 0 \leq v \leq \sigma^{1/2}\}.
$$

\begin{figure}[h]
    \centering
        \begin{tikzpicture}
        \draw (-0.2,0) -- (0.2,0);
        \draw[densely dotted] (0.2,0) -- (0.7,0);
        \draw[->] (0.7,0) -- (6,0);
        \draw (0,-0.2) -- (0,0.2);
        \draw[densely dotted] (0,0.2) -- (0,0.7);
        \draw[->] (0,0.7) -- (0,6);
        \node at (-0.25,-0.25) {\small $O$};
        \node at (6,-0.2) {\small $x$};
        \node at (-0.2,6) {\small $y$};
        
        \draw (1,-0.1) -- (1,0.1);
        \node at (1,-0.3) {\footnotesize $x_0$};
        \draw[dashed] (1,0) --(1,1);
        
        \draw (4,-0.1) -- (4,0.1);
        \node at (4.1,-0.25) {\footnotesize $x_0+\sigma^{1/2}$};
        \draw[dashed] (4,0) --(4,2+7/8);
        
        \draw (-0.1,1) -- (0.1,1);
        \node at (-0.55,0.98) {\scriptsize $\gamma(x_0)$};
        \draw[dashed] (0,1) --(1,1);
        
        \draw (-0.1,1.25) -- (0.1,1.25);
        \node at (-0.85,1.27) {\scriptsize $\gamma(x_0)+\sigma$};
        \draw[dashed] (0,1.25) --(1,1.25);
        
        \draw (-0.1,2.5) -- (0.1,2.5);
        \node at (-1.05,2.48) {\scriptsize $\gamma(x_0)+C_{rs}\sigma$};
        \draw[dashed] (0,2.5) --(1,2.5);
        
        \draw (-0.1,2.75) -- (0.1,2.75);
        \node at (-1.45,2.77) {\scriptsize $\gamma(x_0)+(C_{rs}+1)\sigma$};
        \draw[dashed] (0,2.75) --(1,2.75);
        
        \draw (-0.1,4) -- (0.1,4);
        \node at (-1.15,4) {\scriptsize $\gamma(x_0)+2 C_{rs}\sigma$};
        \draw[dashed] (0,4) --(1,4);
        
        \draw plot[smooth, domain=0.7:4] (\x, \x * \x /8 + 7/8);
        \draw plot[smooth, domain=0.7:4] (\x, \x /4 + 3/4);
        \draw[->] (4.2,2.5) -- (4.2,2+7/8);
        \draw[->] (4.2,2.5) -- (4.2,1.75);
        \node at (5,2.32) {\footnotesize $\sim \gamma''(x_0) \sigma$};
        \draw[->,>=stealth] (4.2,1) to [out=180,in=270] (3, 9/8+7/8-0.05);
        \node at (5,1) {\footnotesize $y=\gamma(x)$};
        \draw[->,>=stealth] (4.2,0.5) to [out=180,in=270] (2.5,5/8+3/4-0.05);
        \node at (6.37,0.5) {\footnotesize $y=\gamma'(x_0)(x-x_0)+\gamma(x_0)$};
        
        \fill[gray] plot[smooth, domain=1:4] (\x, \x * \x /8 + 7/8+1.5) -- (4,2 + 7/8+1.75) -- plot[smooth, domain=4:1] (\x, \x * \x /8 + 7/8+1.75) -- (1,2.5);
        
        \draw[thick] plot[smooth, domain=1:4] (\x, \x * \x /8 + 7/8+0.25);
        \draw[thick] plot[smooth, domain=1:4] (\x, \x * \x /8 + 7/8+3);
        \draw[thick] (1,1.25) -- (1,4);
        \draw[thick] (4,2 + 7/8+0.25) -- (4,2 + 7/8+3);
        
        \draw[] plot[smooth, domain=1:4] (\x, \x /4 + 3/4+1.5);
        \draw[] plot[smooth, domain=1:4] (\x, \x /4 + 3/4+3);
        \draw[] (1,2.5) -- (1,4);
        \draw[] (4,1+3/4+1.5) -- (4,1+3/4+3);
    \end{tikzpicture}
    \caption{Approximation by parallelograms}
    \label{fig:approx_parallelogram}
\end{figure}
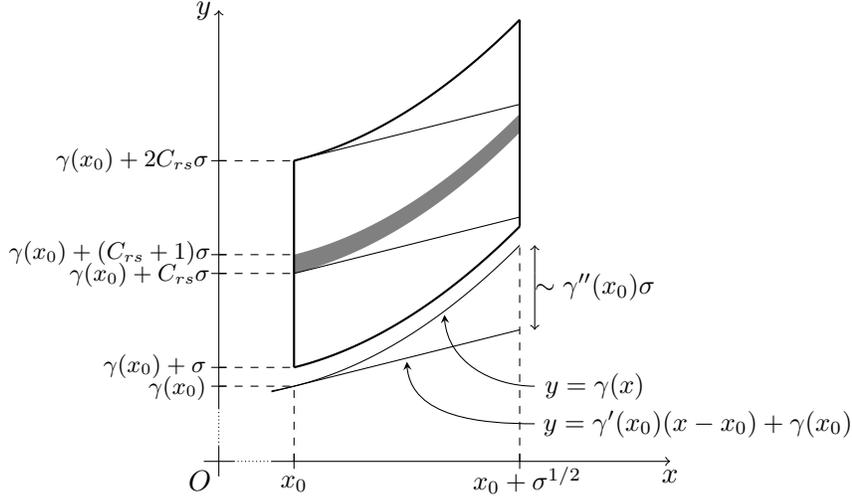
By choosing 
\begin{equation}\label{eqn_C_rs}
    C_{rs}=\sup_{1\leq x_0 \leq 2} \gamma''(x_0) +1,
\end{equation}
we see that $\Tilde{R}$ contains $R([x_0,x_0+\sigma^{1/2}],[C_{rs}\sigma,(C_{rs}+1)\sigma])$ and is contained in $R([x_0,x_0+\sigma^{1/2}],[\sigma,2C_{rs}\sigma])$. Thus, it suffices to decouple $\Tilde{R}$, on which we have $|\det D^2 \phi| \sim \sigma^{2k-3}$.

\subsection{Shear transform, translation and rescaling} To decouple $\Tilde{R}$, we first do a shear transformation that sends $\Tilde{R}$ to an axis parallel rectangle. Define 
$$
T_{x_0}(x,y) := (x, y+ \gamma(x_0) + \gamma' (x_0)(x-x_0))
$$
and
\begin{equation}\label{eqn_psi_x0}
    \psi^{(x_0)}(x,y) = \phi \circ T_{x_0}(x,y) = \phi(x, y+ \gamma(x_0) + \gamma'(x_0)(x-x_0)).
\end{equation}
The pre-image of $\Tilde{R}$ under $T$ is exactly $[x_0,x_0+\sigma^{1/2}] \times [C_{rs} \sigma, 2 C_{rs} \sigma]$, which we denote by $R_{axis}^{(x_0)}(\sigma)$. Our task is reduced to decoupling $R_{axis}^{(x_0)}(\sigma)$ into $(\psi^{(x_0)},\delta)$-flat rectangles. To do this, we need the following estimates:

\begin{lem} \label{lem_sc4_estimates}
Let $\psi^{(x_0)}$ be as in \eqref{eqn_psi_x0} above. Then for any $(x,y) \in R_{axis}^{(x_0)}(\sigma)$, we have
$$
|\psi^{(x_0)}_{xx}(x,y)| \lesssim \sigma^{k-1}, \quad |\psi^{(x_0)}_{yy}(x,y)| \sim \sigma^{k-2}, \quad |\det D^2 \psi^{(x_0)}(x,y)| \sim \sigma^{2k-3}.
$$
\end{lem}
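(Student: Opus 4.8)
The plan is to transport all three estimates from $R_{axis}^{(x_0)}(\sigma)$ to the parallelogram $\tilde R = T_{x_0}(R_{axis}^{(x_0)}(\sigma))$ and then extract them from the factorisation of $\phi$ along the curve $\gamma$. Since $T_{x_0}$ is affine with shear linear part of determinant $1$, we have $\det D^2\psi^{(x_0)} = (\det D^2\phi)\circ T_{x_0}$; differentiating $\psi^{(x_0)}(x,y)=\phi(x,\,y+\gamma(x_0)+\gamma'(x_0)(x-x_0))$ twice in $y$ reproduces $\phi_{yy}$, so $\psi^{(x_0)}_{yy}=\phi_{yy}\circ T_{x_0}$, while differentiating twice in $x$ gives $\psi^{(x_0)}_{xx}=(\partial_v^2\phi)\circ T_{x_0}$ with $v=(1,\gamma'(x_0))$ the tangent vector to $\gamma$ at $(x_0,\gamma(x_0))$. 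Thus it is enough to show, for all points of $\tilde R$,
\[
|\partial_v^2\phi|\lesssim\sigma^{k-1},\qquad|\phi_{yy}|\sim\sigma^{k-2},\qquad|\det D^2\phi|\sim\sigma^{2k-3}.
\]

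Next I would record the elementary geometry of $\tilde R$. Because $\tilde R\subset R([x_0,x_0+\sigma^{1/2}],[\sigma,2C_{rs}\sigma])$, on $\tilde R$ we have $x\sim1$, $y\sim1$ and $|y-\gamma(x)|\sim\sigma$ once $c_\phi$ is small, and hence, writing $G:=x^s-\lambda y^r=\lambda(\gamma(x)^r-y^r)$ and using $\gamma(x),y\sim1$, also $|G|\sim\sigma$. In particular the third estimate is immediate from Proposition \ref{prop_curve_2k-3}. Now write $\phi=G^k Q$ with $G\nmid Q$; then $Q$ is again mixed-homogeneous, so by the factorisation of mixed-homogeneous polynomials recorded in the appendix its zero set in $\{x,y>0\}$ is a union of the coordinate axes and finitely many curves $x^s=\mu y^r$ with $\mu\neq\lambda$, all of which stay a fixed positive distance away from $\gamma\cap\{1\leq x\leq2\}$; consequently $|Q|\sim1$ on $\tilde R$ provided $c_\phi$ is small. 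Finally $|G_y|=\lambda r y^{r-1}\sim1$ on $\tilde R$.

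For the two remaining estimates I would expand by the Leibniz rule. One gets $\phi_{yy}=k(k-1)G^{k-2}G_y^2Q+O(|G|^{k-1})$ on $\tilde R$, and since $k\geq2$, $|G_y|\sim1$ and $|Q|\sim1$, the leading term is $\sim\sigma^{k-2}$ and dominates the $O(\sigma^{k-1})$ remainder, giving $|\phi_{yy}|\sim\sigma^{k-2}$. The estimate on $\partial_v^2\phi$ is the heart of the matter: the same expansion yields $\partial_v^2\phi=k(k-1)G^{k-2}(\partial_v G)^2Q+O(|G|^{k-1})$, and the crucial point is that $\partial_v G$ vanishes at $(x_0,\gamma(x_0))$ — because $v$ is tangent to $\{G=0\}$ there — while $\partial_v G(x,y)=sx^{s-1}-\gamma'(x_0)\lambda r y^{r-1}$ is Lipschitz with constant depending only on $\phi$; since every point of $\tilde R$ lies within $O(\sigma^{1/2})$ of $(x_0,\gamma(x_0))$, this forces $|\partial_v G|\lesssim\sigma^{1/2}$ on $\tilde R$, hence $|k(k-1)G^{k-2}(\partial_v G)^2Q|\lesssim\sigma^{k-2}\cdot\sigma=\sigma^{k-1}$, and so $|\partial_v^2\phi|\lesssim\sigma^{k-1}$.

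The hard part will be precisely this $\psi^{(x_0)}_{xx}$ bound: one has to recognise that after the shear the relevant second-order quantity is the second derivative of $\phi$ along the tangent line of $\gamma$, and then combine the vanishing of $\partial_v G$ on $\gamma$ with the fact that $\tilde R$ has diameter only $O(\sigma^{1/2})$ about $(x_0,\gamma(x_0))$ to gain the extra power of $\sigma$; by contrast the $\phi_{yy}$ and determinant estimates are routine once the factorisation $\phi=G^kQ$ and $|Q|\sim1$ are in hand, the latter being the one place where the structure of mixed-homogeneous polynomials from the appendix enters.
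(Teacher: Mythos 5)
Your proof is correct, and it takes a genuinely cleaner route than the paper's. The paper argues in two stages: it first compares $\psi^{(x_0)}$ on the single slice $x=x_0$ to the curve-straightening change of variables $\psi^{aux}(x,y)=\phi(x,y+\gamma(x))$, extracts the estimates from the expansion of $\psi^{aux}$ there, and then propagates from $(x_0,y)$ to a general $(x_1,y_1)\in R_{axis}^{(x_0)}(\sigma)$ by re-applying the slice estimate at $x_1$ and comparing $\psi^{(x_0)}$ to $\psi^{(x_1)}$ via the nonlinear-shear identities (Lemma \ref{lem:non-linear shear}) together with the Lipschitz bound $|\gamma'(x_1)-\gamma'(x_0)|\lesssim\sigma^{1/2}$. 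You instead observe at the outset that $\psi^{(x_0)}_{yy}=\phi_{yy}\circ T_{x_0}$, $\det D^2\psi^{(x_0)}=(\det D^2\phi)\circ T_{x_0}$, and — this is the essential reformulation — $\psi^{(x_0)}_{xx}=(\partial_v^2\phi)\circ T_{x_0}$ with $v=(1,\gamma'(x_0))$, so that the whole lemma reduces to three pointwise estimates for $\phi$ on $\tilde R$. Writing $\phi=G^kQ$ in $\mathbb{R}[x,y]$ (avoiding the paper's fractional-power factor $Q(x^{1/r},y)$), you read off $|G|\sim\sigma$, $|G_y|\sim1$, $|Q|\sim1$ to get $|\phi_{yy}|\sim\sigma^{k-2}$, and you gain the extra half-power twice in the $\partial_v^2\phi$ bound from the tangency $\partial_v G(x_0,\gamma(x_0))=0$ combined with $\operatorname{diam}\tilde R\lesssim\sigma^{1/2}$; the determinant bound is the one already supplied by Proposition \ref{prop_curve_2k-3}. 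What your version buys is a one-pass argument with a transparent geometric reason for the $\sigma^{k-1}$ gain (the shear direction is tangent to $\{G=0\}$) in place of the two-point comparison; what the paper's version buys is that the same auxiliary function $\psi^{aux}$ and the slice-then-propagate template are reused verbatim in Section \ref{sec_sc5_estimates}. Both hinge on the same inputs — the factorisation of mixed-homogeneous polynomials, $|G|\sim\sigma$ on $\tilde R$, and Proposition \ref{prop_curve_2k-3} — so the logical content is the same even though the bookkeeping differs.
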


Suppose that we have shown the above estimates. We define
$$
\psi(x,y) = \sigma^{-k}\psi^{(x_0)}(\sigma^{1/2} x+x_0, C_{rs}\sigma y + C_{rs}\sigma) - \text{linear terms}.
$$

Then, $|\det D^2 \psi| \sim 1$, $|\psi_{xx}(x,y)| \lesssim 1$ and $|\psi_{yy}(x,y)| \sim 1$ over $[0,1]^2$. Since $\psi$ is a polynomial, this implies that $\psi$ has bounded $C^3$ norm. Therefore, Theorem \ref{thm_Bourgain_Demeter} implies that $[0,1]^2$ can be decoupled into $(\psi,\delta \sigma^{-k})$-flat rectangles. Transforming back, we see that $R_{axis}^{(x_0)}(\sigma)$ can be decoupled into $(\psi^{(x_0)},\delta)$-flat rectangles and hence $R_j$ into $(\phi,\delta)$-flat rectangles.

We are left with decoupling $R_0$ into $(\phi,\delta)$-flat rectangles. By Proposition \ref{prop_curve_neig_dec}, $R_0$ can be decoupled into the pieces given by $R([1+j'\delta^{\frac{1}{2k}},1+(j'+1)\delta^{\frac{1}{2k}}],[0,\delta^{\frac{1}{k}}])$ for a various $j'$. We claim that these pieces are already $(\phi,\delta)$-flat. To see this, write $x_0 =1+ j'\delta^{\frac{1}{2k}}$. By the same shear transformation $T_{x_0}$, it suffices to show that $R^{(x_0)}_{0}(\delta^{\frac{1}{k}}):=[x_0,x_0+\delta^{\frac{1}{2k}}] \times [0,\delta^{\frac{1}{k}}]$ is  $(\psi^{(x_0)},\delta)$-flat. By Lemma \ref{lem_sc4_estimates}, we have for any $(x,y) \in R^{(x_0)}_{0}$,
$$
|\psi^{(x_0)}_{xx}(x,y)| \lesssim \delta^{\frac{k-1}{k}}, \quad |\psi^{(x_0)}_{yy}(x,y)| \lesssim \delta^{\frac{k-2}{k}}, \quad |\psi^{(x_0)}_{xy}(x,y)| \lesssim \delta^{\frac{k-3/2}{k}}.
$$
Define
$$
\Tilde{\psi}(x,y) = \psi^{(x_0)}(x+x_0,y) -\text{linear terms}.
$$
It is clear that for $0\leq x \leq \delta^{\frac{1}{2k}}$, $0 \leq y \leq \delta^{\frac{1}{k}}$, we have $|\Tilde{\psi}|\lesssim \delta$. This implies that $R^{(x_0)}_{0}$ is $(\psi^{(x_0)},\delta)$-flat. Transforming back, we see that $R([x_0,x_0+\delta^{\frac{1}{2k}}],[0,\delta^{\frac{1}{k}}])$ is $(\phi,\delta)$-flat. 

Now, assuming Lemma \ref{lem_sc4_estimates}, we have shown Proposition \ref{prop_sc4}.

\subsection{The proof of Lemma \ref{lem_sc4_estimates}}

\begin{proof}
The proof is divided into two steps. The following lemma is helpful in both steps.
\begin{lem}\label{lem:non-linear shear}
Let $f(x,y) = g(x,y+h(x))$. Then
$$
f_y =g_y, \quad f_x = g_x + h'(x) g_y, \quad f_{yy} = g_{yy}
$$
$$
f_{xy} = g_{xy} + h'(x) g_{yy}, \quad f_{xx} = g_{xx} + 2h'(x) g_{xy} +(h'(x))^2g_{yy}+ h''(x)g_{y}.
$$
In particular, if $h$ is linear, then $\det D^2f = \det D^2 g$.
\end{lem}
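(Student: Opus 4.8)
\textbf{Proof proposal for Lemma \ref{lem:non-linear shear}.} The plan is a direct application of the chain rule, organized so that the second-order formulas are obtained by differentiating the first-order ones. First I would introduce the substitution $u=x$, $v=y+h(x)$, so that $f(x,y)=g(u,v)$, and record that $\partial_x u=1$, $\partial_y u=0$, $\partial_x v=h'(x)$, $\partial_y v=1$. Then $f_y=g_v\,\partial_y v=g_y$ and $f_x=g_u\,\partial_x u+g_v\,\partial_x v=g_x+h'(x)g_y$, where all derivatives of $g$ are understood to be evaluated at $(x,y+h(x))$. This disposes of the first row of identities.

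For the second-order identities I would differentiate these expressions once more, again via the chain rule. Differentiating $f_y=g_y$ in $y$ gives $f_{yy}=g_{yy}$; differentiating it in $x$ gives $f_{xy}=g_{xy}+h'(x)g_{yy}$. Differentiating $f_x=g_x+h'(x)g_y$ in $x$ and collecting terms,
$$
f_{xx}=g_{xx}+h'(x)g_{xy}+h''(x)g_y+h'(x)\bigl(g_{xy}+h'(x)g_{yy}\bigr)=g_{xx}+2h'(x)g_{xy}+(h'(x))^2g_{yy}+h''(x)g_y,
$$
which is the last identity. These are all routine and the only thing to be careful about is keeping track of the point of evaluation $(x,y+h(x))$.

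Finally, for the determinant statement, assume $h$ is linear so that $h''\equiv 0$. Then one simply substitutes the formulas for $f_{xx},f_{xy},f_{yy}$ into $\det D^2 f=f_{xx}f_{yy}-f_{xy}^2$ and expands: the cross terms $\pm 2h'g_{xy}g_{yy}$ cancel, as do the terms $\pm (h')^2 g_{yy}^2$, leaving $g_{xx}g_{yy}-g_{xy}^2=\det D^2 g$. Alternatively, one may note that a nonlinear-in-$x$ shear $T(x,y)=(x,y+h(x))$ has $\det DT\equiv 1$, and for affine $T$ the Hessian transforms as $D^2(g\circ T)=(DT)^{\mathsf T}(D^2 g\circ T)(DT)$, so $\det D^2(g\circ T)=(\det DT)^2\det(D^2 g\circ T)=\det(D^2 g\circ T)$; I would probably just include the one-line algebraic cancellation since it is self-contained. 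There is no real obstacle here—the lemma is a bookkeeping identity—so the main point is simply to state the evaluation convention clearly so that the formulas can be quoted cleanly in the proofs of Lemma \ref{lem_sc4_estimates} and elsewhere.
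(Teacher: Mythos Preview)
Your proposal is correct and is precisely the direct chain-rule computation the paper has in mind; the paper's own proof consists of the single sentence ``By direct computation.'' Your write-up simply fills in those details, and the determinant cancellation is handled correctly.
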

\begin{proof}
By direct computation.
\end{proof}

As a direct corollary of Lemma \ref{lem:non-linear shear}, we have $|\det D^2 \psi^{(x_0)}| \sim \sigma^{2k-3}$ on $R^{(x_0)}_{axis}(\sigma)$.

In the first step, let us consider the line segment $x=x_0$, $C_{rs}\Gs\leq y\leq 2C_{rs}\Gs$.

\subsubsection{On the line segment $(x_0,y)$} We define an auxiliary function as follows:
\begin{equation}\label{eqn:aux_fcn}
    \psi^{aux}(x,y) = \phi(x,y+\gamma(x)).
\end{equation}
Then we have
$$
\psi^{(x_0)}(x,y) = \psi^{aux}(x,y-\left(\gamma(x) -\gamma(x_0)\right) + \gamma'(x_0)(x-x_0)).
$$
Applying Lemma \ref{lem:non-linear shear},
$$
\psi^{(x_0)}_{xy}(x_0,y) = \psi^{aux}_{xy}(x_0,y), \quad \psi^{(x_0)}_{yy}(x_0,y) = \psi^{aux}_{yy}(x_0,y).
$$
On the other hand, $(x^s -\lambda y^r)^k$ divides $\phi$. This implies that $\phi$ can be written as $(y-\gamma(x))^kP(x^{1/r},y)$ for some mixed homogeneous polynomial $P$ that doesn't vanish on $\gamma$. Then $\psi^{aux} = y^k Q(x^{1/r},y)$ where $Q$ is a mixed homogeneous polynomial with coefficients independent of $\sigma$, and $Q$ doesn't vanish near the $x$-axis. Therefore, $|\psi^{aux}_{yy}(x,y)| \sim |y^{k-2}Q(x,y)| \sim \sigma^{k-2}$ and $|\psi^{aux}_{xy}(x,y)|\lesssim |y^{k-1}\partial_{x}(Q(x^{1/r},y))| \lesssim \sigma^{k-1} $ when $x\in [1,2]$ and $y\sim \sigma$. In summary, we have
$$
|\psi^{(x_0)}_{xy}(x_0,y)| \lesssim \sigma^{k-1}, \quad |\psi^{(x_0)}_{yy}(x_0,y)| \sim \sigma^{k-2}.
$$
Finally, using $|\det D^2 \psi^{(x_0)}| \sim \sigma^{2k-3}$, we have $|\psi^{(x_0)}_{xx}(x_0,y)| \sim \sigma^{k-1}$. Therefore, we have shown the desired estimates on the line segment $(x_0,y)$, $y\sim \sigma$.

\subsubsection{The remaining region} Let $(x_1,y_1) \in R^{(x_0)}_{axis}(\sigma)$. By applying the result above to 
$$
\psi^{(x_1)}(x,y) = \phi(x,y+\lambda^{-1/r}x_1^{s/r} + \gamma'(x_1)(x-x_1)),
$$
we have for $y \sim \sigma$,
$$
|\psi^{(x_1)}_{xx}(x_1,y)| \sim \sigma^{k-1},\quad |\psi^{(x_1)}_{xy}(x_1,y)| \lesssim \sigma^{k-1}, \quad |\psi^{(x_1)}_{yy}(x_1,y)| \sim \sigma^{k-2}.
$$
Now,
$$
\psi^{(x_0)}(x,y) = \psi^{(x_1)}\left(x,y - (\gamma'(x_1)-\gamma'(x_0))(x-x_1)+\gamma'(x_0)(x_1-x_0)- (\gamma(x_1)-\gamma(x_0))\right).
$$
From Figure \ref{fig:approx_parallelogram} and the way we have chosen $C_{sr}$, we have
$$
y':= y_1 +\gamma'(x_0)(x_1-x_0)-(\gamma(x_1)-\gamma(x_0)) \sim \sigma.
$$
On the other hand, $|\gamma'(x_1)-\gamma'(x_0)| \lesssim |x_1-x_0| \lesssim \sigma^{1/2}$ since $\gamma'$ is Lipschitz on $[1,2]$.
By Lemma \ref{lem:non-linear shear}, we have
$$
|\psi_{yy}^{(x_0)}(x_1,y_1)| = |\psi_{yy}^{(x_1)}(x_1,y')| \sim \sigma^{k-2};
$$
$$
|\psi_{xy}^{(x_0)}(x_1,y_1)| \leq |\psi_{xy}^{(x_1)}(x_1,y')|+ O(\sigma^{1/2})|\psi_{yy}^{(x_1)}(x_1,y')| \lesssim \sigma^{k-3/2};
$$
and
$$
    |\psi_{xx}^{(x_0)}(x_1,y_1)| \leq |\psi_{xx}^{(x_1)}(x_1,y')|+ O(\sigma^{1/2})|\psi_{xy}^{(x_1)}(x_1,y')| + O(\sigma)|\psi_{yy}^{(x_1)}(x_1,y')|\lesssim \sigma^{k-1}.
$$
This finishes the proof of Lemma \ref{lem_sc4_estimates}.
\end{proof}

\section{Decoupling curved neighbourhoods: Part II}\label{sec_dec_curve_p2}

In this section we deal with Case \descref{(B2)} introduced in Section \ref{sec_classification}. We again only consider the part in the first quadrant. Without loss of generality, we may assume that $s>r$. We also fix $j$ and write $\lambda=\lambda_j>0$.

As in the last section, define $\gamma= \{(x,y): y=\lambda^{-1/r} x^{s/r}\}$. By abuse of notation, we write $\gamma(x) =\lambda^{-1/r} x^{s/r}$. Given intervals $I$ and $J$, denote $R(I,J)$ to be the set $\{(x,y)\in \R^2 : x \in I,y-\gamma(x) \in J\}.$

Our task is to prove the following:

\begin{prop}\label{prop_sc5}
Let $\phi(x,y)$ be a mixed-homogeneous polynomial that cannot be divided by $(x^s-\lambda y^r)^2$. Then $R([1,2],[0,c_{\phi}])$ can be decoupled into $(\phi,\delta)$-flat rectangles. 
\end{prop}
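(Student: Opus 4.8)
The plan is to combine the curvature-handling devices of Section \ref{sec_dec_cur_p1} (a dyadic decomposition in the transverse variable, the projection-type decoupling of Proposition \ref{prop_curve_neig_dec}, the parallelogram approximation, and the affine straightening shear) with the iterative machinery of Section \ref{sec_dec_axi_p2}; indeed Case \descref{(B2)} stands to Case \descref{(B1)} roughly as Case \descref{(A2)} stands to Case \descref{(A1)}. Let $k\ge 1$ be the multiplicity of the factor $x^s-\lambda y^r$ in $\det D^2\phi$ along the branch of $\gamma$ over $[1,2]$; if $\det D^2\phi$ does not vanish there we are done by Theorem \ref{thm_Bourgain_Demeter}. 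Using the constant $c_{dya}$ of \eqref{eqn_c_dya}, decompose $R([1,2],[0,c_\phi])$ into $R_0=R([1,2],[0,\delta^{\frac1{k+2}}])$ and $R_j=R([1,2],[c_{dya}^{j-1}\delta^{\frac1{k+2}},c_{dya}^{j}\delta^{\frac1{k+2}}])$ for $j\ge1$; by the logarithmic-loss principle it suffices to decouple each $R_j$.

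Fix $j\ge 1$ and set $\sigma\sim c_{dya}^{j}\delta^{\frac1{k+2}}$. By Proposition \ref{prop_curve_neig_dec}, $R_j$ decouples into curved pieces over intervals of length $\sigma^{1/2}$; as in Figure \ref{fig:approx_parallelogram} each such piece is equivalent to a parallelogram, and the affine shear $T_{x_0}(x,y)=(x,y+\gamma(x_0)+\gamma'(x_0)(x-x_0))$ carries its preimage to an axis-parallel box $[x_0,x_0+\sigma^{1/2}]\times[\sim\sigma,\sim\sigma]$ with phase $\psi^{(x_0)}=\phi\circ T_{x_0}$, a genuine polynomial with bounded coefficients. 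The crucial step is the curved analogue of Lemma \ref{lem_sc4_estimates}: on this box one has $|\psi^{(x_0)}_{xx}|\sim1$, $|\psi^{(x_0)}_{xy}|\sim1$, $|\psi^{(x_0)}_{yy}|\sim1$, $|\det D^2\psi^{(x_0)}|\sim\sigma^{k}$ and $|\partial_x\det D^2\psi^{(x_0)}|\lesssim\sigma^{k-1/2}$. These follow, as in Section \ref{sec_dec_cur_p1}, by introducing the non-affine straightening $\psi^{\mathrm{aux}}(x,y)=\phi(x,y+\gamma(x))=y\,Q(x^{1/r},y)$ with $Q$ nonvanishing near the $x$-axis (here one uses that $x^s-\lambda y^r$ divides $\phi$ exactly once, so that the $y^{1}$- and $y^{2}$-coefficients of $\psi^{\mathrm{aux}}$ are comparable to $1$ over $[1,2]$, the cylinder case $q=s$ being excluded by $s>r$), and then transporting the estimates through the $O(\sigma)$-discrepancy between $T_{x_0}$ and the true straightening by repeated application of Lemma \ref{lem:non-linear shear}; the cancellation that makes $\det D^2\psi^{(x_0)}\sim\sigma^k$ rather than $\sim1$ is precisely the vanishing of $\det D^2\phi$ on $\gamma$, and the extra $\sigma^{1/2}$-saving in $\partial_x\det D^2\psi^{(x_0)}$ comes from $\det D^2\phi$ vanishing identically along $\gamma$ (so its derivative in the genuine tangent direction is of one order higher).

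Because $|\psi^{(x_0)}_{xy}|$ is comparable to $|\psi^{(x_0)}_{xx}|$, I first apply a shear $(x,y)\mapsto(x-\mu y,y)$ with $\mu\sim1$ making $\psi^{(x_0)}_{xy}$ vanish at a corner of the box; this leaves $|\psi^{(x_0)}_{xy}|\lesssim\sigma^{1/2}$ on the box and preserves the other bounds. I then translate and anisotropically rescale the box to $[1,2]\times[0,1]$ and normalize the height, setting $\psi=\sigma^{-1}\psi^{(x_0)}(\sigma^{1/2}x+x_0,\sigma y+\mathrm{const})-\text{linear terms}$; the previous bounds show that $\psi$ has bounded coefficients that are polynomials in $\sigma^{1/2}$ and that $|\psi_{xx}|\sim1$, $|\psi_{xy}|\lesssim\sigma$, $|\psi_{xxy}|\lesssim\sigma$, $|\det D^2\psi|\sim\sigma^{k+1}$, $|\partial_x\det D^2\psi|\lesssim\sigma^{k+1}$ on $[\tfrac12,\tfrac52]\times[0,1]$, i.e.\ $\psi\in\mathcal A_{k+1}(\sigma)$ with $|\det D^2\psi|\sim\sigma^{k+1}$. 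Applying Proposition \ref{prop_R_j} with $l=k+1$ (the flatness scale for $\psi$ is $\sigma^{-1}\delta\le\sigma^{k+1}$ by the choice $\sigma\ge\delta^{1/(k+2)}$) decouples $[1,2]\times[0,1]$ into $(\psi,\sigma^{-1}\delta)$-flat rectangles; undoing the rescaling, the two shears, the parallelogram approximation and the projection decoupling, and slightly enlarging the resulting parallelograms to rectangles, we decouple $R_j$ into $(\phi,\delta)$-flat rectangles. For $R_0$ one instead applies Lemma \ref{lem_cylinderial_dec} directly to the analogously rescaled phase (now $\sigma=\delta^{1/(k+2)}$ forces $\sigma^{k+1}=\sigma^{-1}\delta$, so the output rectangles are already $(\phi,\delta)$-flat), exactly as at the end of Section \ref{sec_dec_axi_p2}.

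The main obstacle is the curved analogue of Lemma \ref{lem_sc4_estimates} together with the coefficient bookkeeping that lands $\psi$ in $\mathcal A_{k+1}$. One must pin down $\det D^2\psi^{(x_0)}$ and $\partial_x\det D^2\psi^{(x_0)}$ on the box — which rests on the algebraic facts that $x^s-\lambda y^r$ divides $\phi$ exactly once and that $\det D^2\phi$ vanishes on $\gamma$ — run the auxiliary-function/mean-value argument of Section \ref{sec_dec_cur_p1} to control the variation of the true tangent direction of $\gamma$ across an interval of length $\sigma^{1/2}$, and finally verify that a single diagonalizing shear followed by the anisotropic rescaling brings $|\psi_{xy}|$ down to the scale $\sigma$ demanded by Definition \ref{def_A_l}. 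Once $\psi\in\mathcal A_{k+1}$, the iterative cylindrical decoupling of Lemma \ref{lem_cylinderial_dec}, which was designed precisely to absorb the drift of the eigenvectors of $D^2\psi$, finishes the argument with no new ideas.
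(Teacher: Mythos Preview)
Your overall architecture matches the paper's exactly: dyadic layers at height $\delta^{1/(k+2)}$, Proposition \ref{prop_curve_neig_dec}, the parallelogram approximation, the affine shear $T_{x_0}$, an anisotropic rescaling landing the phase in some $\mathcal A_l$, and then Proposition \ref{prop_R_j} (resp.\ Lemma \ref{lem_cylinderial_dec} for $R_0$).  You have also correctly isolated the estimate $|\partial_x\det D^2\psi^{(x_0)}|\lesssim\sigma^{k-1/2}$, which is implicitly needed to verify the last condition in Definition \ref{def_A_l} and which the paper glosses over.

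There is, however, a genuine gap in your justification of $|\psi^{(x_0)}_{xx}|\sim 1$.  You write $\psi^{\mathrm{aux}}(x,y)=\phi(x,y+\gamma(x))=y\,Q(x^{1/r},y)$ and say ``here one uses that $x^s-\lambda y^r$ divides $\phi$ exactly once''.  But the hypothesis of Proposition \ref{prop_sc5} is only that $(x^s-\lambda y^r)^2\nmid\phi$; it is entirely possible that $x^s-\lambda y^r$ does \emph{not} divide $\phi$ at all while $\det D^2\phi$ still vanishes on $\gamma$ (compare the axis model $\phi=x^4+6x^2y+6y^2$, where $y\nmid\phi$ yet $y\mid\det D^2\phi$).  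In that case $\psi^{\mathrm{aux}}(x,0)\not\equiv 0$ and your formula $\psi^{\mathrm{aux}}=yQ$ is simply false; your route to $|\psi^{(x_0)}_{xx}|\sim1$ via $h''\psi^{\mathrm{aux}}_y\neq0$ then breaks down.  The paper's Lemma \ref{lem_sc5_estimates} handles this by a contradiction argument that splits into the two sub-cases of Proposition \ref{prop_sc5_estimates} (namely $(x^s-\lambda y^r)\mid\phi$ versus $(x^s-\lambda y^r)\nmid\phi$) and shows in each that $\psi^{(x_0)}_{xx}(x_0,0)=0$ forces $r=s$ or a linear $\phi$.  You need this second sub-case.

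A secondary remark: your preliminary ``diagonalising'' shear $(x,y)\mapsto(x-\mu y,y)$ with $\mu\sim1$, designed to bring $|\psi_{xy}^{(x_0)}|$ down to $O(\sigma^{1/2})$ before rescaling so as to land in $\mathcal A_{k+1}(\sigma)$, is unnecessary.  The paper simply rescales by $T_\sigma(x,y)=(\sigma^{1/2}(x-1)+x_0,\,C_{rs}\sigma y+C_{rs}\sigma)$ and observes that the trivial bound $|\psi^{(x_0)}_{xy}|\lesssim1$ already yields $|\psi_{xy}|\lesssim\sigma^{1/2}$, which places $\psi$ in $\mathcal A_{2k+2}(\sigma^{1/2})$ directly; since $(\sigma^{1/2})^{2k+2}=\sigma^{k+1}$ this is the same output scale as your $\mathcal A_{k+1}(\sigma)$.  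Relatedly, your claims $|\psi^{(x_0)}_{xy}|\sim1$ and $|\psi^{(x_0)}_{yy}|\sim1$ (as lower bounds) are neither proved nor needed; only $|\psi^{(x_0)}_{xx}|\sim1$ matters.
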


The decoupling of $R([1,2],[-c_{\phi},0])$ is similar. The proof of Proposition \ref{prop_sc5} relies heavily on the techniques developed in Sections \ref{sec_dec_axi_p2} and \ref{sec_dec_cur_p1}. 

\subsection{Reduction to $R_{axis}$} Let $k \geq 1$ be the largest integer such that $(x^s - \lambda y^r)^k$ divides $\det D^2 \phi$. Let $C_{rs}$ and $c_{dya}$ be as in \eqref{eqn_C_rs} and \eqref{eqn_c_dya}, respectively. Let $R_0 = R([1,2],[0,\delta^{\frac{1}{k+2}}])$. Define
$$
R_j = R([1,2], [c_{dya}^{j-1}\delta^{\frac{1}{k+2}},c_{dya}^{j}\delta^{\frac{1}{k+2}}]).
$$
It suffices to decouple each $R_j$ since we can can afford logarithmic losses. The choice of the power of $\delta$ will be clear later.

We will first decouple $R_j, j\geq 1$. Denote $\sigma = c^{j-1}_{dya}\delta^{\frac{1}{k+2}}/C_{rs}$. By Proposition \ref{prop_curve_neig_dec}, $R_j$ can be decoupled into curved regions $\{R([1+j'\sigma^{1/2},1+(j'+1)\sigma^{1/2},[C_{rs}\sigma,(C_{rs}+1)\sigma])$: $1\leq j'\leq \Gs^{-1/2}$\}. Referring to Figure \ref{fig:approx_parallelogram}, this can be approximated by a parallelogram $\Tilde{R}$, on which we can do a shear transformation $T_{x_0}^{-1}$ to $R_{axis}^{(x_0)}(\sigma) := [x_0,x_0+\sigma^{1/2}] \times [C_{rs}\sigma,2C_{rs}\sigma]$. It suffices to decouple $R_{axis}^{(x_0)}(\sigma)$ into $(\psi^{(x_0)},\delta)$-flat rectangles. 

By the similar argument, to decouple $R_0$, it suffices to decouple $R^{(x_0)}_{0}(\delta^{\frac{1}{k+2}}):= [x_0,x_0+\delta^{\frac{1}{2(k+2)}}] \times [0,\delta^{\frac{1}{k+2}}]$ into $(\psi^{(x_0)},\delta)$-flat rectangles.

\subsection{Transform to a function in the family $\mathcal{A}_l$} We postpone the proof of the following estimate to the next subsection:

\begin{lem}\label{lem_sc5_estimates} Let $\psi^{(x_0)}$ be as in \eqref{eqn_psi_expand} above. Then for any $(x,y) \in R_{axis}^{(x_0)}(\sigma)$, we have $|\psi_{xx}^{(x_0)}| \sim 1$.
\end{lem}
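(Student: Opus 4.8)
The plan is to mirror the structure of the proof of Lemma \ref{lem_sc4_estimates} in the previous section, but now exploiting that only $k$ (not $2k-3$) controls the vanishing of $\det D^2\phi$, and that $\phi$ itself is \emph{not} divisible by $(x^s-\lambda y^r)^2$. First I would record, exactly as in Section \ref{sec_dec_cur_p1}, that after the shear $T_{x_0}$ we have $\psi^{(x_0)}(x,y)=\phi(x,y+\gamma(x_0)+\gamma'(x_0)(x-x_0))$, and that by Lemma \ref{lem:non-linear shear} the quantity $\psi^{(x_0)}_{xx}$ is only affected by the linear shear through the lower-order terms $2h'(x)g_{xy}+(h'(x))^2g_{yy}+h''(x)g_y$; since $h(x)=\gamma(x_0)+\gamma'(x_0)(x-x_0)$ is linear, $h''\equiv 0$, so $\psi^{(x_0)}_{xx}(x,y)=\phi_{xx}(x,y+\gamma(x_0)+\gamma'(x_0)(x-x_0))$ plus terms that are $O(\sigma^{1/2})\cdot|\phi_{xy}|+O(\sigma)\cdot|\phi_{yy}|$ evaluated at the same point.

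Next I would pass to the auxiliary function $\psi^{aux}(x,y)=\phi(x,y+\gamma(x))$ as in \eqref{eqn:aux_fcn}, and use the factorization structure of mixed-homogeneous polynomials (the appendix, and the representation $\phi = x^{\nu_1}y^{\nu_2}\prod(x^s-\lambda_j y^r)^{n_j}P(x^s,y^r)$). Because $\phi$ is \emph{not} divisible by $(x^s-\lambda y^r)^2$, the factor $(y-\gamma(x))$ appears in $\phi$ to order either $0$ or $1$; in either case $\psi^{aux}(x,y)=\phi(x,y+\gamma(x))$ has, as a polynomial in its second variable near $y=0$ with $x\in[1,2]$, a nonvanishing coefficient of $y^0$ or a nonvanishing coefficient of $y^1$ — i.e. $\psi^{aux}$ need not vanish to high order in $y$. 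The point is that $\psi^{aux}_{xx}(x,y)=\partial_x^2[\phi(x,y+\gamma(x))]$, and evaluating at $y\sim\sigma$ with $x\in[1,2]$, the dominant contribution comes from $\phi_{xx}$ restricted to the curve $\gamma$, which is comparable to $1$ because $\phi_{xx}$ restricted to $\gamma$ is itself a nonzero mixed-homogeneous polynomial bounded away from $0$ on $x\in[1,2]$ (this is where one invokes an appendix fact — analogous to the computation $S(x,0)\neq 0$ in Section \ref{sec_axis_1}, and to the observation that in Case \descref{(B2)} the \emph{function} $\phi$, not just its Hessian, fails to be doubly divisible, so $\phi_{xx}|_\gamma\not\equiv 0$). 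Then one transfers this to $\psi^{(x_0)}_{xx}$ via Lemma \ref{lem:non-linear shear} and the already-established bounds $|\psi^{(x_0)}_{xy}|\lesssim\sigma^{k-1/2}$-type and $|\psi^{(x_0)}_{yy}|\lesssim\sigma^{k-2}$ on $R_{axis}^{(x_0)}(\sigma)$ (which follow exactly as in Lemma \ref{lem_sc4_estimates}, and whose error contributions are $\ll 1$ once $c_\phi$ is small), together with a continuity/mean-value argument moving from the central line $x=x_0$ to all of $R_{axis}^{(x_0)}(\sigma)$ as in the "remaining region" step there.

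The main obstacle I expect is the algebraic input: verifying that $\phi_{xx}$ does not vanish identically on the curve $\gamma$ when $(x^s-\lambda y^r)^2\nmid\phi$. One has to rule out accidental cancellation — a priori $\phi$ could be divisible by $(x^s-\lambda y^r)$ once, have $\phi_{xx}$ still divisible by $(x^s-\lambda y^r)$, etc. This should be handled by the same kind of explicit expansion used in Proposition \ref{prop_y2_not_divide} / the appendix: write $\phi = (x^s-\lambda y^r) Q + (\text{lower, using }x^s\approx\lambda y^r)$ or use the normal form $\phi(x,y)=Cx^m+yP$ transported to the curve, and compute $\phi_{xx}$ along $\gamma$ directly, checking its leading behaviour in $x$ is a nonzero monomial. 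Once that nondegeneracy is in hand, everything else is a routine repetition of the estimates and shear bookkeeping from Section \ref{sec_dec_cur_p1}, with $k$ in place of the various shifted exponents and with $c_\phi$ chosen small enough to absorb all error terms.
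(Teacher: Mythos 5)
There is a genuine gap, and it begins with a wrong estimate for the shear. The shear you use to define $\psi^{(x_0)}$ is $T_{x_0}(x,y) = (x,\ y+\gamma(x_0)+\gamma'(x_0)(x-x_0))$, whose slope is $\gamma'(x_0)$, an $O(1)$ quantity for $x_0\in[1,2]$. By Lemma~\ref{lem:non-linear shear} (with $h$ linear, so $h''\equiv 0$),
\begin{equation*}
\psi^{(x_0)}_{xx} \;=\; \phi_{xx} + 2\gamma'(x_0)\,\phi_{xy} + (\gamma'(x_0))^2\,\phi_{yy},
\end{equation*}
all evaluated at $T_{x_0}(x,y)$. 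The cross-terms are $O(1)\cdot|\phi_{xy}|$ and $O(1)\cdot|\phi_{yy}|$, \emph{not} $O(\sigma^{1/2})\cdot|\phi_{xy}|+O(\sigma)\cdot|\phi_{yy}|$ as you wrote. (The $O(\sigma^{1/2})$ shear slope you may be thinking of arises only when comparing $\psi^{(x_0)}$ to $\psi^{(x_1)}$ with $|x_1-x_0|\lesssim\sigma^{1/2}$, which is a different step.) In Case~\descref{(B2)}, $\phi$ is either non-vanishing on $\gamma$ or vanishes only to first order, so $\phi_{xy}$ and $\phi_{yy}$ are generically $O(1)$ near $\gamma$ --- they are not subordinate to $\phi_{xx}$. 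Consequently the thing you propose to verify, namely $\phi_{xx}|_\gamma\not\equiv 0$, is neither what controls $\psi^{(x_0)}_{xx}(x_0,0)$ nor sufficient for the lemma: the quadratic form $\phi_{xx}+2\gamma'(x_0)\phi_{xy}+(\gamma'(x_0))^2\phi_{yy}$ can vanish on $\gamma$ even when $\phi_{xx}|_\gamma\neq 0$.

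The paper sidesteps this cancellation danger by arguing by contradiction rather than by isolating a dominant term. Since $\psi^{(x_0)}_{xx}$ is Lipschitz with $O(1)$ constant and $R^{(x_0)}_{axis}(\sigma)$ has diameter $O(\sigma^{1/2})$, it reduces to showing $\psi^{(x_0)}_{xx}(x_0,0)\neq 0$. Assume not. Because $(x_0,\gamma(x_0))$ lies on the zero set of $K=\det D^2\phi$ and shears preserve the Hessian determinant, $\det D^2\psi^{(x_0)}(x_0,0)=0$; combined with $\psi^{(x_0)}_{xx}(x_0,0)=0$ this forces $\psi^{(x_0)}_{xy}(x_0,0)=0$. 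Passing to $\psi^{aux}(x,y)=\phi(x,y+\gamma(x))$ and using that the shear from $\psi^{aux}$ to $\psi^{(x_0)}$ has $h_1'(x_0)=0$, one obtains $\psi^{aux}_{xy}(x_0,0)=0$. The algebraic normal forms in Proposition~\ref{prop_sc5_estimates} for a mixed-homogeneous $\phi$ not divisible by $(x^s-\lambda y^r)^2$ then show this is impossible. So the algebraic input you need is not about $\phi_{xx}|_\gamma$; it is the implication ``$\psi^{aux}_{xy}(x_0,0)=0$ (and, in one sub-case, $\psi^{(x_0)}_{xx}(x_0,0)=0$) leads to a contradiction,'' which is what Proposition~\ref{prop_sc5_estimates} supplies. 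You should restructure your argument around this contradiction rather than trying to estimate $\psi^{(x_0)}_{xx}$ directly.
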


Assuming Lemma \ref{lem_sc5_estimates} for now, we define
$$
T_\sigma(x,y)  = (\sigma^{1/2}(x-1)+x_0, C_{rs}\sigma y+ C_{rs} \sigma)
$$
so that $T_\sigma([1,2]\times [0,1]) = R_{axis}^{(x_0)}(\sigma)$. Define
$$
\psi(x,y) = \sigma^{-1} (\psi^{(x_0)}\circ T_\sigma)(x,y) - \text{linear terms}.
$$

Now, on the set $[1,2]\times [0,1]$, $|\psi_{xx}| \sim 1$, $|\psi_{xy}| \lesssim \sigma^{1/2}$ and $|\det D^2 \psi| \sim \sigma^{k+1}$. We see that $\psi \in \mathcal{A}_{2k+2}(\sigma^{1/2})$\footnote{Technically, the properties over a slightly enlarged parallelogram are required. We omit the details here. Readers may refer to Section \ref{sec_dec_axi_p2}.}. Note that the choice of exponent in the dyadic decomposition implies that $\delta \sigma^{-1}\leq (\sigma^{1/2})^{2k+2}$. By Proposition \ref{prop_R_j}, we obtain a decoupling of $[1,2]\times [0,1]$ into $(\psi,\delta \sigma^{-1})$-flat rectangles. Under the scaling, this gives a decoupling of $R_{axis}^{(x_0)}(\sigma)$ into $(\psi^{(x_0)},\delta)$-flat rectangles and a decoupling of $R_j$ into $(\phi,\delta)$-flat rectangles. 

We now decouple $R_0$. By Proposition \ref{prop_curve_neig_dec}, we can decouple $R_0$ into curved regions $R([x_0,x_0+\delta^{\frac{1}{2(k+2)}}],[0,\delta^{\frac{1}{k+2}}])$ for a family of $x_0$'s. Applying the same shear transform $T_{x_0}$, we obtain $R_0^{(x_0)}(\delta^{\frac{1}{k+2}}):=R([x_0,x_0+\delta^{\frac{1}{2(k+2)}}],[0,\delta^{\frac{1}{k+2}}])$. We define
$$
T_0(x,y) = (\delta^{\frac{1}{2(k+2)}}x + x_0 , \delta^{\frac{1}{k+2}} y)
$$
so that $T_0([1,2] \times [0,1]) = R_{0}^{(x_0)}(\delta^{\frac{1}{k+2}})$. Define
$$
\Tilde{\psi}(x,y) = \delta^{-\frac{1}{k+2}}(\psi^{(x_0)} \circ T_\sigma)(x,y)  -\text{linear terms}.
$$
Similarly, we see that $\Tilde{\psi} \in \mathcal{A}_{2k+2}(\delta^{\frac{1}{2(k+2)}})$. Lemma \ref{lem_cylinderial_dec} gives a decoupling of $[1,2] \times [0,1]$ into $(\Tilde{\psi},\delta^{\frac{k+1}{k+2}})$-flat rectangles. Reversing all the transformation, we obtain a decoupling of $R_0$ into $(\phi,\delta)$-flat rectangles. This completes the proof of Proposition \ref{prop_sc5} assuming Lemma \ref{lem_sc5_estimates}.

\subsection{Proof of Lemma \ref{lem_sc5_estimates}}\label{sec_sc5_estimates}

\begin{proof}
Since $\psi_{xx}^{(x_0)}$ is Lipschitz and $R_{axis}^{(x_0)}(\sigma)$ has diameter $O(\sigma^{1/2})$, it suffices to show that $\psi_{xx}^{(x_0)}(x_0,0)\neq 0$. 
Suppose on the contrary that $\psi_{xx}^{(x_0)}(x_0,0)=0$. Since $\det D^2 \psi^{(x_0)}(x_0,0) = \det D^2 \phi(x_0,\gamma(x_0))=0$, we see that $\psi_{xy}^{(x_0)}(x_0,0)=0$. Consider the auxiliary function in \eqref{eqn:aux_fcn},
$$
    \psi^{aux}(x,y) = \phi(x,y+\gamma(x)).
$$
Lemma \ref{lem:non-linear shear} gives $\psi^{aux}_{xy}(x_0,0) = \psi^{(x_0)}_{xy}(x_0,0) =0$.

Since $\phi$ cannot be divided by $(x^s-\lambda y^r)^2$, one of the following cases happens:
\begin{enumerate}
    \item $(y^r - \lambda^{-1}x^s)$ divides $\phi$ but $(y^r - \lambda^{-1}x^s)^2$ doesn't divide $\phi$. By Proposition \ref{prop_sc5_estimates}, we have
    $$
        \psi^{aux}(x,y) = c_1 x^{s(r-1)/r + m}y  + c_2 y^2 P(x^{1/r},y).
    $$
    for some $c_1 \neq 0$, $m\geq 0$, and some polynomial $P$.
    
     \item $(y^r - \lambda^{-1}x^s)$ doesn't divide $\phi$.  By Proposition \ref{prop_sc5_estimates}, we have
    $$
    \psi^{aux}(x,y) = c_0 x^{s+m} + c_1  x^{s(r-1)/r+m}y + c_2 y^2 Q(x^{1/r},y).
    $$
    for some $c_0 \neq 0$, $m\geq 0$, and some polynomial $Q$.
    
\end{enumerate}

For the first case, $\psi^{aux}_{xy}(x_0,0)=0$ implies $s(r-1)/r + m=0$, which implies to $m=0$ and $r=1$. Hence, $\phi$ is for the form $c_1 (y-\lambda^{-1}x^s)$ and 
$$\psi^{(x_0)}(x,y)=c_1 (y-\gamma(x)+ \gamma'(x_0)(x-x_0)).$$

Recall that $\gamma(x) = \lambda^{-1}x^{s/r}$. $\psi^{(x_0)}_{xx} = 0$ implies that $\gamma''(x_0) = 0$. Therefore, $s=1$, which contradicts the assumption that $r\neq s$.

For the second case, $\psi^{aux}_{xy}(x_0,0)=0$ implies either $s(r-1)/r + m=0$ or $c_1=0$. Similarly, $s(r-1)/r + m=0$ implies that $\phi$ is of the form $c_0 x^s+ c_1 (y-\lambda^{-1}x^s)$, which leads to a contradiction similar to the above. For $c_1=0$, $\psi^{aux}_{y}(x_0,0)=0$ and hence by Lemma \ref{lem:non-linear shear},
$$
\psi^{(x_0)}_{xx}(x_0,0) = \psi^{aux}_{xx}(x_0,0) \neq 0,
$$
from which a contradiction arises.

In summary, both cases lead to contradictions. Therefore, we have $\psi^{(x_0)}_{xx}(x_0,0) \neq 0$, and thus we have completed the proof of Lemma \ref{lem_sc5_estimates}.
\end{proof}

\section{The convex case}\label{sec_convex}
In this section we briefly mention why we can have $\ell^2(L^4)$ decoupling estimates as in \eqref{eqn_decoupling_l2} when $\phi$ is convex. First, when $\phi$ has positive Gaussian curvature we can invoke \eqref{eqn_BD_decoupling_l2} to strengthen the $\ell^4(L^4)$ estimates to $\ell^2(L^4)$ estimates. Second, notice that throughout this paper, if we start with a convex polynomial $\phi$, then all its ``descendants" for which decouplings are studied are composition of $\phi$ with an affine map. By ``descendants" here we mean, for instance, $\tilde \phi$ in \eqref{eqn_tilde_phi}, $\eta$ in \eqref{eqn_defn_eta} and $\psi^{(x_0)}$ in \eqref{eqn_psi_x0}. Since the composition of a convex function with an affine map is still convex, \eqref{eqn_BD_decoupling_l2} is still applicable. Thus we have the $\ell^2(L^4)$ inequalities in the convex case.

\section{Appendix}\label{sec_appendix}

\subsection{Facts about mixed-homogeneous polynomials}

First of all, mixed-homogeneous polynomials and their Hessian determinants share a similar homogeneity:

\begin{prop}
Suppose that $\phi:\R^2 \to \R$ is a mixed homogeneous polynomial, that is, for some positive integers $q,r,s$ with $\mathrm{gcd}(r,s)=1$ we have
\begin{equation}\label{eqn_defn_mixed_homo}
  \phi(x,y) = \rho^{-q} \phi(\rho^r x, \rho^s y),\quad \text{for all $(x,y)\in \R^2$ and all $\rho>0$}.   
\end{equation}
Then the Hessian determinant of $\phi$ satisfies
$$
(\det D^2 \phi)(x,y) = \rho^{-2(q-(r+s))} (\det D^2 \phi)(\rho^r x ,\rho^s y), \quad \text{for all $(x,y)\in \R^2$ and all $\rho>0$}. 
$$
\end{prop}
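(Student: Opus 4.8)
The statement is a purely differential consequence of the defining homogeneity relation \eqref{eqn_defn_mixed_homo}, so the plan is simply to differentiate \eqref{eqn_defn_mixed_homo} twice and track the powers of $\rho$ produced by the chain rule. No algebraic structure of $\phi$ beyond smoothness is needed.

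\textbf{Key steps.} First I would rewrite \eqref{eqn_defn_mixed_homo} in the cleaner form $\rho^q\phi(x,y)=\phi(\rho^r x,\rho^s y)$, valid for all $(x,y)$ and all $\rho>0$. Next, differentiating both sides in $x$ and then again in $x$ gives, by the chain rule, $\rho^q\phi_{xx}(x,y)=\rho^{2r}\phi_{xx}(\rho^r x,\rho^s y)$, hence $\phi_{xx}(\rho^r x,\rho^s y)=\rho^{q-2r}\phi_{xx}(x,y)$. Differentiating twice in $y$ gives $\phi_{yy}(\rho^r x,\rho^s y)=\rho^{q-2s}\phi_{yy}(x,y)$, and differentiating once in $x$ and once in $y$ gives $\phi_{xy}(\rho^r x,\rho^s y)=\rho^{q-r-s}\phi_{xy}(x,y)$. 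Finally I would substitute these three identities into $\det D^2\phi=\phi_{xx}\phi_{yy}-\phi_{xy}^2$ evaluated at $(\rho^r x,\rho^s y)$:
\[
(\det D^2\phi)(\rho^r x,\rho^s y)=\rho^{(q-2r)+(q-2s)}\phi_{xx}\phi_{yy}(x,y)-\rho^{2(q-r-s)}\phi_{xy}^2(x,y)=\rho^{2(q-(r+s))}(\det D^2\phi)(x,y),
\]
and then divide by $\rho^{2(q-(r+s))}$ to obtain the claimed relation.

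\textbf{Main obstacle.} There is essentially no obstacle: the only point requiring a word of care is making sure the exponents $q-2r$, $q-2s$, $q-r-s$ combine correctly so that the cross term $\phi_{xy}^2$ and the product $\phi_{xx}\phi_{yy}$ both carry the \emph{same} power $2(q-r-s)$ of $\rho$ — which is exactly what makes the determinant transform homogeneously rather than merely the individual entries. Since $(x,y)$ and $\rho$ are arbitrary, the two-sided form of the identity follows immediately by replacing $\rho$ with $\rho^{-1}$ if desired, but the direct division above already gives the stated form.
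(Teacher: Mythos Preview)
Your proposal is correct and is precisely the ``direct computation'' the paper invokes: differentiate the scaling relation twice via the chain rule to get the homogeneities of $\phi_{xx},\phi_{yy},\phi_{xy}$, then observe both terms of $\phi_{xx}\phi_{yy}-\phi_{xy}^2$ carry the common factor $\rho^{2(q-r-s)}$. Nothing further is needed.
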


\begin{proof}
Direct computation.
\end{proof}

Next, we need the following lemma.
\begin{lem}[\cites{DZ2019,IM2011}]
Every non-zero real mixed-homogeneous polynomial $\phi$ satisfying \eqref{eqn_defn_mixed_homo} has the following factorization property:
\begin{equation}\label{eqn_factorisation_intro}
   \phi(x,y)=x^{\nu_1}y^{\nu_2}\prod_{j=1}^{M} (x^s - \lambda_j y^r)^{n_j}P(x^s,y^r),
\end{equation}
for some nonnegative integers $\nu_1,\nu_2,M,n_j$, nonzero real numbers $\Gl_j$, and a real homogeneous polynomial $P$ that never vanishes except at the origin.

As a result, $\phi$ admits the following expansion:
\begin{equation}\label{eqn_expansion}
    \phi(x,y)=x^{\nu_1}y^{\nu_2}(c_0 x^{ns} +c_1 x^{(n-1)s}y^r+c_2 x^{(n-2)s}y^{2r}+\cdots+c_n y^{nr}).
\end{equation}
\end{lem}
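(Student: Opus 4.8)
The plan is to read off the structure of $\phi$ directly from its Newton diagram. Writing $\phi(x,y)=\sum_{a,b\ge 0}c_{a,b}x^ay^b$ and substituting into \eqref{eqn_defn_mixed_homo}, the right-hand side becomes $\sum_{a,b}c_{a,b}\rho^{\,ra+sb-q}x^ay^b$; since this must agree with $\phi(x,y)$ for every $\rho>0$ and the monomials are linearly independent, every monomial actually appearing satisfies $ra+sb=q$. First I would let $\nu_1$ (resp.\ $\nu_2$) be the largest power of $x$ (resp.\ $y$) dividing $\phi$ and write $\phi=x^{\nu_1}y^{\nu_2}\psi$, so that $\psi$ is a nonzero mixed-homogeneous polynomial of weighted degree $q':=q-r\nu_1-s\nu_2$ that is divisible by neither $x$ nor $y$; it then suffices to prove the factorization for $\psi$.

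The next step uses $\mathrm{gcd}(r,s)=1$. Since $x\nmid\psi$, some monomial of $\psi$ has $x$-exponent $0$, which forces $s\mid q'$; likewise $y\nmid\psi$ forces $r\mid q'$; hence $rs\mid q'$, and we write $q'=rsn$ with $n\ge 0$. For any monomial $x^ay^b$ of $\psi$ we have $ra+sb=rsn$, so $s\mid ra$, hence $s\mid a$ by coprimality; writing $a=ks$ then gives $b=r(n-k)$ with $0\le k\le n$. Therefore
\[
   \psi(x,y)=\sum_{k=0}^{n}c_k(x^s)^k(y^r)^{n-k}=\tilde P(x^s,y^r),
\]
where $\tilde P(u,v):=\sum_{k=0}^{n}c_ku^kv^{n-k}$ is a binary form of degree $n$, and $c_0\neq 0$, $c_n\neq 0$ (again because $x\nmid\psi$ and $y\nmid\psi$). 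Multiplying back by $x^{\nu_1}y^{\nu_2}$ and relabelling $c_{n-i}\mapsto c_i$ already yields the expansion \eqref{eqn_expansion}.

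Finally I would factor the binary form $\tilde P$ over $\R$. Dehomogenizing, $p(t):=\tilde P(t,1)$ has degree exactly $n$ and $p(0)=c_0\neq 0$, so
\[
   p(t)=c_n\prod_{j=1}^{M}(t-\Gl_j)^{n_j}\prod_{l}(t^2+\Gb_lt+\Gg_l)^{m_l}
\]
with the $\Gl_j$ distinct nonzero reals and each quadratic irreducible over $\R$, i.e.\ $\Gb_l^2-4\Gg_l<0$. Multiplying by $v^n$, replacing $t$ by $u/v$, and then setting $u=x^s$, $v=y^r$ gives
\[
   \psi(x,y)=\Big(\prod_{j=1}^{M}(x^s-\Gl_jy^r)^{n_j}\Big)P(x^s,y^r),\qquad P(u,v):=c_n\prod_{l}(u^2+\Gb_luv+\Gg_lv^2)^{m_l},
\]
and restoring the factor $x^{\nu_1}y^{\nu_2}$ proves \eqref{eqn_factorisation_intro}. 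Each $u^2+\Gb_luv+\Gg_lv^2$ is a definite quadratic form (negative discriminant), hence vanishes only at $(u,v)=(0,0)$, so $P$ vanishes only at the origin, as required (with the convention that an empty product equals $1$, which covers the cases $M=0$ or $n=0$). The only step demanding any care is the divisibility bookkeeping in the second paragraph that pins the $x$- and $y$-exponents to multiples of $s$ and $r$; everything else is the classical real factorization of binary forms, so I do not anticipate a serious obstacle.
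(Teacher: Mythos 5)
Your proof is correct, and it takes a genuinely different and more self-contained route than the paper's. The paper simply cites Proposition 2.2 of \cite{IM2011} for the complex factorization of $\phi$ as $C x^{\nu_1}y^{\nu_2}\prod_j(x^s-\lambda_j y^r)^{n_j}\prod_{j'}(x^s-\lambda'_{j'}y^r)^{n_{j'}}$ with $\lambda'_{j'}\notin\R$, and then argues that the product of the non-real factors, $P(u,v)=C\prod_{j'}(u-\lambda'_{j'}v)^{n_{j'}}$, must itself be a real polynomial (since $\phi$ and the real factors are real) whose homogeneous zero set reduces to the origin. You instead prove the key structural fact from first principles: the exponent constraint $ra+sb=q$ from mixed-homogeneity, combined with $\gcd(r,s)=1$, forces the cofactor $\psi=\phi/(x^{\nu_1}y^{\nu_2})$ to be a binary form in $(x^s,y^r)$, which already gives \eqref{eqn_expansion}; then the classical real factorization of $p(t)=\tilde P(t,1)$ into nonzero real roots and definite quadratics, rehomogenized and composed with $(u,v)=(x^s,y^r)$, gives \eqref{eqn_factorisation_intro} with $P$ a product of definite quadratic forms, hence nonvanishing off the origin. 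What your approach buys is independence from the cited reference and a transparent reason why the $\lambda_j$ are nonzero (namely $c_0\ne0$, i.e.\ $p(0)\ne0$); what the paper's approach buys is brevity. Both arrive at the same conclusion, and your divisibility bookkeeping in the middle paragraph is sound.
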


\begin{proof}
By Proposition 2.2 in \cite{IM2011}, there exists nonnegative integers $\nu_1$, $\nu_2$, $n_j$, $n_{j'}$, $M$, $M'$, real numbers $C$, $\lambda_j$ and complex numbers $\lambda_{j'}'\not \in \mathbb{R}$ such that
$$
\phi(x,y) = C x^{\nu_1}y^{\nu_2}\prod_{j=1}^M(x^s-\lambda_j y^r)^{n_j} \prod_{j'=1}^{M'}(x^s-\lambda_j' y^r)^{n_j'}.
$$
Let $P(u,v) = C\prod_{j'=1}^{M'}(u-\lambda_j' v)^{n_j'}$. It is clear that $P$ is a homogeneous polynomial. Since $\phi$ is a real polynomial and is the product of $P(x^s,y^r)$ and some real polynomials, $P$ must also be a real polynomial. To show that $P$ never vanishes except at the origin, we first note that if $P(1,0) = 0$, then $C=0$ and $\phi$ is the zero polynomial, a contradiction. So $P(1,0) \neq 0$ and thus $C\neq 0$. Also, $P(c,1) \neq 0$ for all $c\in \R$. But recall that $P$ is a homogeneous polynomial, and thus its zero set is a collection of straight lines passing through the origin. This shows that $P$ can only vanish at the origin.
\end{proof}

The following proposition helps us in the classification of scenarios in Section \ref{sec_classification}.

\begin{prop}\label{prop_y2_not_divide}
Let $\phi$ be a mixed-homogeneous polynomial without linear terms such that $y^2$ does not divide $\phi$. If $\det(D^2\phi)$ vanishes on the $x$-axis, then $\phi$ is of the form
$$
\phi(x,y)=Cx^m+yP(x,y)
$$
for some $C\neq 0$, $m\geq 2$ and some polynomial $P$.
\end{prop}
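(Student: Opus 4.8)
The plan is to split off the $y$-independent part of $\phi$ and show it must be a single nonzero monomial. Write $\phi(x,y)=\phi_0(x)+yP(x,y)$, where $\phi_0$ collects exactly the monomials of $\phi$ of degree $0$ in $y$. By the mixed-homogeneity relation \eqref{eqn_defn_mixed_homo}, every monomial $x^iy^j$ occurring in $\phi$ satisfies $ri+sj=q$, so the monomials with $j=0$ all have the same exponent $i=q/r$; hence $\phi_0$ is either identically zero or a single monomial $Cx^m$ with $C\neq0$ and $m=q/r\ge1$. Since $\phi$ has no linear terms, in the latter case $m\neq1$, so $m\ge2$ and we have exactly the asserted form, with $P$ the polynomial coefficient of $y$. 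Thus the whole task reduces to ruling out $\phi_0\equiv0$.

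To exclude that, I would argue by contradiction. If $\phi_0\equiv0$ then $y\mid\phi$, and since $y^2\nmid\phi$ we may write $\phi=y\psi$ with $\psi(x,0)\not\equiv0$. As $\psi$ is again mixed-homogeneous, $\psi(x,0)$ is a single nonzero monomial $ax^d$, $a\neq0$, $d\ge0$. First dispose of $d=0$: then $\psi(x,0)=a$ is a nonzero constant and $\phi=y\psi=ay+(\text{higher order})$ contains the linear term $ay$, contradicting the hypothesis; hence $d\ge1$. Now evaluate the Hessian determinant on the $x$-axis: from $\phi=y\psi$ one has $\phi_{xx}|_{y=0}=0$, $\phi_{xy}|_{y=0}=\psi_x(x,0)$ and $\phi_{yy}|_{y=0}=2\psi_y(x,0)$, so
\[
\det(D^2\phi)(x,0)=-\psi_x(x,0)^2=-a^2d^2x^{2d-2},
\]
which is not identically zero since $a\neq0$ and $d\ge1$, contradicting the assumption that $\det(D^2\phi)$ vanishes on the $x$-axis. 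This forces $\phi_0\not\equiv0$, and the first paragraph then yields the claim. Alternatively, one may run the same argument through the factorization \eqref{eqn_factorisation_intro}, reading $\phi(x,0)$ directly off the exponents $\nu_1,\nu_2,n_j$ and $\deg P$: $y^2\nmid\phi$ gives $\nu_2\in\{0,1\}$, and $\nu_2=1$ is excluded exactly as above.

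I expect the proof to be short and essentially computational. The only subtlety worth flagging is that the hypothesis ``$\det(D^2\phi)$ vanishes on the $x$-axis'' is vacuous in isolation — it holds whenever $y\mid\phi$ with $\psi_x(x,0)\equiv0$ — so it must be used in tandem with $y^2\nmid\phi$ and the absence of linear terms. It is precisely this combination, where vanishing on the axis forces $d=0$ and that in turn resurrects a forbidden linear term, that eliminates the case $\phi_0\equiv0$.
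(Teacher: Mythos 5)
Your proof is correct, and it rests on the same two pillars as the paper's: the weighted-degree constraint $ri+sj=q$ forces the $y$-free part $\phi_0$ to be a single monomial, and when $y\mid\phi$ the Hessian determinant on the $x$-axis reduces to $-\psi_x(x,0)^2$ with $\psi=\phi/y$, which is exactly the quantity the paper computes. The difference is organizational: the paper works from the expansion \eqref{eqn_expansion} and runs two parallel contradiction arguments (to force $\nu_2=0$ and then $c_0\neq 0$), whereas you observe that both reduce to the single statement $\phi_0\not\equiv 0$ and dispatch the alternative in one pass by introducing $\psi$ directly; this is a genuine streamlining, since the case $c_0=0$ that the paper treats separately becomes automatic once you insist $\psi(x,0)\not\equiv 0$. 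One small imprecision worth noting: in your $d=0$ sub-case, mixed-homogeneity actually forces $\psi\equiv a$ exactly --- since $ri+sj=q-s=0$ admits only $i=j=0$ --- so $\phi=ay$ outright rather than $ay$ plus higher-order terms; the parenthetical ``(higher order)'' is vacuous, but the contradiction with the no-linear-term hypothesis is unaffected.
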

\begin{proof}
Using \eqref{eqn_expansion}, we expand
$$
\phi(x,y)=x^{\nu_1}y^{\nu_2}(c_0 x^{ns} +c_1 x^{(n-1)s}y^r+c_2 x^{(n-2)s}y^{2r}+\cdots+c_n y^{nr}).
$$
Thus we need to show $\nu_2=0$ and $c_0\neq 0$. Since $\phi$ does not have a linear term, we are done.

We first prove $\nu_2=0$. Suppose towards contradiction that $\nu_2\geq 1$. If $\nu_2\geq 2$ then $y^2$ divides $\phi$, which contradicts our assumption. Thus $\nu_2=1$. Then we may rewrite
$$
\phi(x,y)=c_0x^{\nu_1+ns}y+y^2Q(x,y),
$$
for some polynomial $Q$. Direct computation shows that when $y=0$ we have 
$$
\det(D^2\phi)(x,y)=-c^2_0 (\nu_1+ns)^2x^{2(\nu_1+ns-1)}.
$$
Note we must have $\nu_1+ns\geq 1$, otherwise $\phi(x,y)=c_0 y$, a linear function, contradicting the assumption. Thus, using $\det(D^2\phi)=0$ on the $x$-axis, we must have $c_0=0$. This implies that $y^{\nu_2+r}=y^{1+r}$ divides $\phi$. But since $r\geq 1$, we have $y^2$ divides $\phi$, again contradicting the assumption. Thus we have $\nu_2=0$.

Next we prove $c_0\neq 0$. Suppose towards contradiction that $c_0=0$ (then $n\geq 1$, otherwise $\phi\equiv 0$). Then 
$$
\phi(x,y)=x^{\nu_1}y^r(c_1x^{(n-1)s}+c_2 x^{(n-2)s}y^r+\cdots+c_n y^{(n-1)r}).
$$
If $r\geq 2$ then $y^2$ divides $\phi$ which is a contradiction. Since $r\geq 1$ we must have $r=1$. Using the same computation above, when $y=0$ we have
$$
\det(D^2\phi)(x,y)=-c^2_1 (\nu_1+(n-1)s)^2x^{2(\nu_1+(n-1)s-1)}.
$$
Now we must have $\nu_1+(n-1)s\geq 1$, otherwise $\nu_1=0$ and $n=1$, in which case $\phi(x,y)=c_1 y$, again a contradiction. Thus $\nu_1+(n-1)s\geq 1$ and this forces $c_1=0$. Thus $y^{2r}$ divides $\phi$, and since $r\geq 1$, we arrive again at a contradiction. Hence $c_0\neq 0$.
\end{proof}

This proposition is used in Section \ref{sec_dec_cur_p1}.
\begin{prop}\label{prop_curve_2k-3}
Suppose that $s\neq r$ and $\lambda_j>0$. Let $\phi(x,y) = (x^s-\lambda_j y^r)^k P(x,y) $ be a mixed-homogeneous polynomial where $k\geq 2$ and $P(t^r,\lambda_j^{1/r} t^s)\not\equiv 0$. Then $\det D^2 \phi = (x^s - \lambda_j y^r)^{2k-3}Q(x,y)$ where $Q(t^r,\lambda_j^{1/r} t^s)\not\equiv 0$.
\end{prop}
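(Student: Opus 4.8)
The plan is to compute $\det D^2\phi$ explicitly in terms of $g:=x^s-\lambda_j y^r$ and $P$, read off its order of vanishing along the factor $g$, and evaluate the leftover coefficient on the curve $\gamma$ (parametrized by $x=t^r$, so that $y=\lambda_j^{-1/r}t^s$ on $\gamma$). Record first that $g_x=sx^{s-1}$, $g_y=-\lambda_j r y^{r-1}$, $g_{xx}=s(s-1)x^{s-2}$, $g_{yy}=-\lambda_j r(r-1)y^{r-2}$, and, crucially, $g_{xy}=0$. Differentiating $\phi=g^kP$ twice, each of $\phi_{xx},\phi_{yy},\phi_{xy}$ factors as $g^{k-2}$ times a polynomial whose expansion in powers of $g$ starts with $\alpha_0=k(k-1)g_x^2P$, $\beta_0=k(k-1)g_y^2P$, $\gamma_0=k(k-1)g_xg_yP$ respectively, with next coefficients $\alpha_1=k(g_{xx}P+2g_xP_x)$, $\beta_1=k(g_{yy}P+2g_yP_y)$, $\gamma_1=k(g_xP_y+g_yP_x)$ (and $g^2$-coefficients $P_{xx},P_{yy},P_{xy}$, which play no role).

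First I would exploit the structural cancellation. Writing $\det D^2\phi=\phi_{xx}\phi_{yy}-\phi_{xy}^2=g^{2k-4}\big[(\alpha_0+g\alpha_1+\cdots)(\beta_0+g\beta_1+\cdots)-(\gamma_0+g\gamma_1+\cdots)^2\big]$, the bottom term of the bracket is $\alpha_0\beta_0-\gamma_0^2=k^2(k-1)^2\big(g_x^2g_y^2-g_x^2g_y^2\big)P^2=0$. This is precisely what yields the exponent $2k-3$: one further factor of $g$ comes out, so $\det D^2\phi=g^{2k-3}Q$ as an identity of polynomials, with $Q\equiv\alpha_0\beta_1+\alpha_1\beta_0-2\gamma_0\gamma_1\pmod g$. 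Expanding this last expression, the monomials containing $P_x$ or $P_y$ cancel in pairs (two copies of $g_x^2g_yP_yP$ and two of $g_xg_y^2P_xP$ with opposite signs), leaving $Q\equiv k^2(k-1)\,P^2\big(g_x^2g_{yy}+g_y^2g_{xx}\big)\pmod g$.

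It then remains to check that the right-hand side does not vanish identically on $\gamma$. Since $g_x^2g_{yy}+g_y^2g_{xx}=-\lambda_j s^2r(r-1)x^{2s-2}y^{r-2}+\lambda_j^2r^2s(s-1)x^{s-2}y^{2r-2}$, substituting $x=t^r$, $y=\lambda_j^{-1/r}t^s$ collapses both monomials to the single power $t^{3rs-2r-2s}$ with matching powers of $\lambda_j$, and the result equals $\lambda_j^{2/r}\,rs\,(s-r)\,t^{3rs-2r-2s}$ — here the hypothesis $r\neq s$ is exactly what keeps the coefficient nonzero. Since $k\geq2$ gives $k^2(k-1)\neq0$, since $P$ does not vanish identically on $\gamma$ by hypothesis, and since the polynomial ring in $t$ is an integral domain, the restriction of $Q$ to $\gamma$ is a nonzero constant times $(P|_\gamma)^2\,t^{3rs-2r-2s}$, hence not identically zero; this is the claim, with $Q=\det D^2\phi/g^{2k-3}$.

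The only delicate points are the two cancellations — the vanishing of $\alpha_0\beta_0-\gamma_0^2$ and of the $P_x,P_y$ terms — both reflecting the rank-one structure of the leading part of $D^2\phi$ (it is proportional to $\nabla g\otimes\nabla g$); I do not anticipate a genuine obstacle. I would also note that $g^{2k-3}\mid\det D^2\phi$ is needed only as a polynomial identity, which is immediate from the expansion above once $\alpha_0\beta_0-\gamma_0^2\equiv0$ is observed, so no irreducibility of $g$ is invoked.
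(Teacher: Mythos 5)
Your proposal is correct and follows essentially the same route as the paper: expand $\phi_{xx},\phi_{xy},\phi_{yy}$ in powers of $g=x^s-\lambda_j y^r$, observe that the lowest-order contribution $\alpha_0\beta_0-\gamma_0^2$ vanishes (the ``$\nabla g\otimes\nabla g$'' rank-one structure), and evaluate the next coefficient $k^2(k-1)P^2\bigl(g_x^2 g_{yy}+g_y^2 g_{xx}\bigr)$ on the curve. One small advantage of your formulation: since $g_{xx}$ vanishes for $s=1$ and $g_{yy}$ vanishes for $r=1$, the expression stays polynomial and nonzero on $\gamma$ for all $r,s\geq1$ with $r\neq s$, whereas the paper's factored form $rsy^{r-2}x^{s-2}P^2(\cdots)$ requires $r,s\geq2$, forcing it to handle $s=1$ and $r=1$ separately by citing a lemma from another paper.
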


\begin{proof}
By writing $\Tilde{\phi}(x,y) = \phi(x,\lambda^{-1/r}y)$ and noting that $\det D^2 \Tilde{\phi} = \lambda^{-2/r}\det D^2 \phi$, we may assume without loss of generality that $\lambda_j=1$.

The case $s=1$ is proved in Lemma 10.1 of \cite{DZ2019}. The case $r=1$ is similar.  Now, assume that $r,s \geq 2$, direct computation shows
\begin{align*}
\phi_{xx}&=\left(x^s-y^r\right)^{k-2} \left[ (k-1) k s^2 x^{2 s-2} P +k (s-1) s x^{s-2}  \left(x^s-y^r\right)P \right. \\  & \left.\quad + 2 k s x^{s-1} \left(x^s-y^r\right) P_x + \left(x^s-y^r\right)^2P_{xx}  \right];\\
\phi_{xy}&=\left(x^s-y^r\right)^{k-2} \left[-(k-1) k r s x^{s-1} y^{r-1}  P+ k s x^{s-1}  \left(x^s-y^r\right)P_y \right. \\& \left.\quad -k r y^{r-1}  \left(x^s-y^r\right)P_{x}+ \left(x^s-y^r\right)^2P_{xy} \right ];  \\
\phi_{yy}&=\left(x^s-y^r\right)^{k-2} \left [(k-1) k r^2 y^{2 r-2} P -k (r-1) r y^{r-2}  \left(x^s-y^r\right)P \right.  \\&   \left.\quad  -2 k r y^{r-1}  \left(x^s-y^r\right)P_y + \left(x^s-y^r\right)^2P_{yy} \right ].
\end{align*}

We factor out $\left(x^s-y^r\right)^{k-2}$ from each of above, which contributes $\left(x^s-y^r\right)^{2k-4}$ to $\det D^2 \phi$. Now, the remaining terms without the factor $\left(x^s-y^r\right)$ in $\det D^2 \phi$ is
$$
\begin{vmatrix}
(k-1) k s^2 x^{2 s-2} P & -(k-1) k r s x^{s-1} y^{r-1}  P \\ -(k-1) k r s x^{s-1} y^{r-1}  P &(k-1) k r^2 y^{2 r-2} P
\end{vmatrix},
$$
which is zero. On the other hand, the coefficient of the factor $\left(x^s-y^r\right)$ is given by
\begin{align*}
    &\quad  \left((k-1) k s^2 x^{2 s-2} P\right) \left( -k (r-1) r y^{r-2}  P   -2 k r y^{r-1}  P_y \right)\\
    &+\left((k-1) k r^2 y^{2 r-2} P\right) \left(k (s-1) s x^{s-2} P + 2 k s x^{s-1}  P_x\right)\\
    &- 2\left(-(k-1) k r s x^{s-1} y^{r-1}  P\right)\left( k s x^{s-1}  P_y -k r y^{r-1}  P_{x}  \right) \\
    &=(k-1) k^2 r s y^{r-2} x^{s-2} P^2  \left(s x^s - r y^r - r s x^s + r s y^r\right)
\end{align*}

Applying the assumption on $P$ and $s\neq r$, the above quantity is non-zero over the curve $(t^r,t^s)$ when $t \neq 0$ as desired. 
\end{proof}

The following proposition is used in Section \ref{sec_sc5_estimates}.

\begin{prop}\label{prop_sc5_estimates}
Let $\phi$ be a nonzero mixed-homogeneous polynomial satisfying \eqref{eqn_defn_mixed_homo}. Then
\begin{enumerate}
    \item $y^r-\lambda^{-1} x^s$ divides $\phi$ if and only if 
    \begin{equation}\label{eq:prop_9.5}
        \phi(x,y) = (y-\lambda^{-1/r}x^{s/r})Q(x^{1/r},y)
    \end{equation}
    for some polynomial $Q$.
    \item If  $y^r-\lambda^{-1} x^s$ divides $\phi$ but $(y^r-\lambda^{-1} x^s)^2$ doesn't divide $\phi$, then 
    \begin{equation}\label{eq:prop_9.5_2}
        \phi(x,y) = c_1(y-\lambda^{-1/r} x^{s/r})x^{s(r-1)/r+m}+ (y-\lambda^{-1/r} x^{s/r})^2 R(x^{1/r},y)
    \end{equation}
for some $c_1 \neq 0$, $m\geq 0$ and some polynomial $R$.
\end{enumerate}

\end{prop}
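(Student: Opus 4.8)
The plan is to reduce both parts, via the substitution $x=u^r$, to statements about honest polynomials in the variables $u$ and $y$, in which the symbols ``$x^{1/r}$'', ``$x^{s/r}$'', ``$\lambda^{-1/r}$'' become ordinary variables and powers. (As in every application, $\lambda>0$, so these roots are real for $x>0$, and all the asserted identities are identities in $\R[u,y]$ with $u=x^{1/r}$, equivalently identities valid for all $x>0$.) The two mechanisms used are the invariance of $\phi(u^r,y)$ under $u\mapsto\zeta u$ for $r$-th roots of unity $\zeta$, and the mixed-homogeneity of $\phi$ and of its polynomial cofactors.

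\emph{Part (1).} Put $F(u,y):=\phi(u^r,y)\in\R[u,y]$; as $\phi$ is a polynomial, $F$ only involves powers $u^{ra}$, so $F(\zeta u,y)=F(u,y)$ for every $\zeta$ with $\zeta^r=1$. The right side of (1) says exactly that $(y-\lambda^{-1/r}u^s)\mid F$ in $\R[u,y]$. If $y^r-\lambda^{-1}x^s\mid\phi$, write $\phi=(y^r-\lambda^{-1}x^s)g$, so that $F(u,y)=(y^r-\lambda^{-1}u^{rs})\,g(u^r,y)$; since $y^r-\lambda^{-1}u^{rs}=(y-\lambda^{-1/r}u^s)\sum_{i=0}^{r-1}\lambda^{-i/r}u^{si}y^{r-1-i}$ with the second factor a real polynomial ($\lambda>0$), we get $(y-\lambda^{-1/r}u^s)\mid F$. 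Conversely, if $(y-\lambda^{-1/r}u^s)\mid F$, then applying the automorphism $u\mapsto\zeta u$ of $\C[u,y]$ (which fixes $F$ and sends the divisor to $y-\lambda^{-1/r}\zeta^s u^s$) and using $\gcd(r,s)=1$ (so $\zeta\mapsto\zeta^s$ permutes the $r$-th roots of unity) gives $(y-\lambda^{-1/r}\zeta u^s)\mid F$ over $\C[u,y]$ for every $\zeta^r=1$; these $r$ linear-in-$y$ factors are pairwise coprime, so their product $y^r-\lambda^{-1}u^{rs}$, a real polynomial, divides $F$ over $\C[u,y]$ and hence over $\R[u,y]$, say $F=(y^r-\lambda^{-1}u^{rs})\,\tilde g$. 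Since $F$ and $y^r-\lambda^{-1}u^{rs}$ are invariant under $u\mapsto\zeta u$, so is $\tilde g$, whence every monomial of $\tilde g$ has $u$-exponent divisible by $r$, i.e. $\tilde g=h(u^r,y)$ for some $h\in\R[x,y]$; restoring $x=u^r$ gives $\phi(x,y)=(y^r-\lambda^{-1}x^s)\,h(x,y)$.

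\emph{Part (2).} From $(x^s-\lambda y^r)\mid\phi$ and $(x^s-\lambda y^r)^2\nmid\phi$, write $\phi=(x^s-\lambda y^r)\phi_1$ with $\phi_1\in\R[x,y]$ and $(x^s-\lambda y^r)\nmid\phi_1$. Since $x^s-\lambda y^r$ satisfies \eqref{eqn_defn_mixed_homo} with exponent $rs$ and $\phi$ with exponent $q$, the cofactor $\phi_1$ satisfies \eqref{eqn_defn_mixed_homo} with exponent $q':=q-rs$, which is $\geq0$ because $\phi$ is divisible by the weight-$rs$ polynomial $x^s-\lambda y^r$. The crucial point is that $\phi_1(1,\lambda^{-1/r})\neq0$: if it vanished, then mixed-homogeneity would give $\phi_1(u^r,\lambda^{-1/r}u^s)=u^{q'}\phi_1(1,\lambda^{-1/r})\equiv0$, hence $(y-\lambda^{-1/r}u^s)\mid\phi_1(u^r,y)$, and applying part (1) to $\phi_1$ would force $(x^s-\lambda y^r)\mid\phi_1$, a contradiction. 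Now set $F(u,y):=\phi(u^r,y)$ and $w:=\lambda^{-1/r}u^s$. Because $(u^r,w)$ lies on the curve $x^s=\lambda y^r$ we have $F(u,w)=0$, so Taylor expansion in $y$ about $y=w$ yields $F(u,y)=a_1(u)(y-w)+(y-w)^2R(u,y)$ with $a_1(u)=F_y(u,w)=\phi_y(u^r,\lambda^{-1/r}u^s)$ and $R\in\R[u,y]$ (the two successive exact divisions by $y-w$ being legitimate since $F(u,w)=0$ and $F_y(u,w)-a_1(u)=0$). Finally $\phi_y$ satisfies \eqref{eqn_defn_mixed_homo} with exponent $q-s$, so $a_1(u)=u^{q-s}\phi_y(1,\lambda^{-1/r})$; differentiating $\phi=(x^s-\lambda y^r)\phi_1$ in $y$ and restricting to the curve gives $\phi_y(1,\lambda^{-1/r})=-r\lambda^{1/r}\phi_1(1,\lambda^{-1/r})=:c_1\neq0$. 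Putting $m:=q'/r=(q-rs)/r\geq0$, so that $q-s=s(r-1)+rm$ with $rm\in\Z_{\geq0}$ (hence $x^{s(r-1)/r+m}=(x^{1/r})^{s(r-1)+rm}$ is a genuine monomial in $x^{1/r}$), and restoring $x=u^r$, we obtain exactly \eqref{eq:prop_9.5_2}.

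The conceptual content is modest; most of the work is bookkeeping. The step I expect to be the main obstacle is establishing the nonvanishing $\phi_1(1,\lambda^{-1/r})\neq0$ in part (2) — this is precisely where part (1) and mixed-homogeneity are genuinely used — together with verifying that each division invoked ($y^r-\lambda^{-1}u^{rs}\mid F$, the two divisions by $y-w$, and $\tilde g\in\R[u^r,y]$) is exact in the intended polynomial ring, and that every occurrence of $\lambda^{-1/r}$, $x^{s/r}$, $x^{1/r}$ is read consistently through the substitution $x=u^r$.
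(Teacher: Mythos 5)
Your proof is correct, and its route diverges from the paper's in a way worth noting. For the backward direction of part~(1), the paper invokes the structural factorization \eqref{eqn_factorisation_intro} of a mixed-homogeneous polynomial and localizes the irreducible factor $v-u^s$ among the $(u^{sr}-\lambda_j v^r)^{n_j}$; you instead exploit the action of $r$-th roots of unity on $F(u,y)=\phi(u^r,y)$ together with $\gcd(r,s)=1$ to conjugate a single linear factor $y-\lambda^{-1/r}u^s$ into all $r$ of them, multiply them, and descend the quotient back to $\R[x,y]$ via invariance. This is more elementary — it never touches the factorization lemma, and in fact that direction of your argument uses only that $\phi$ is a polynomial and $\gcd(r,s)=1$, not mixed-homogeneity. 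For part~(2), the paper works with the explicit cofactor $Q$ produced in part~(1) and evaluates it along the curve, whereas you Taylor-expand $F(u,y)$ about $y=\lambda^{-1/r}u^s$ and compute $a_1(u)=\phi_y(u^r,\lambda^{-1/r}u^s)$ directly from the mixed-homogeneity of $\phi_y$; the decisive step, $c_1\neq 0$, is in both cases reduced to part~(1) plus $(x^s-\lambda y^r)^2\nmid\phi$, so the logical dependency is the same even though the bookkeeping differs. One further point in your favor: you are explicit that $m=(q-rs)/r$ need not be an integer, only $rm\in\Z_{\ge 0}$, so that $x^{s(r-1)/r+m}$ is a legitimate monomial in $x^{1/r}$; the paper leaves this implicit, and indeed its downstream use in Lemma~\ref{lem_sc5_estimates} only requires $m\ge 0$ as a real number. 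In short: the proposal is a valid alternative proof, more self-contained for (1)-backward and with a cleaner closed form for $c_1$ in (2).
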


\begin{proof}
By rescaling $x \to \lambda^{1/s} x$, it suffices to show the case where $\lambda=1$. 

We first prove (1). If $\phi(x,y)$ can be divided by $y^r- x^s$, then there exists a polynomial $Q_1(x,y)$ such that
\begin{equation}\label{eq:prop_9.5_proof}
    \phi(x,y) = (y^r-x^s) Q_1(x,y) = (y-x^{s/r})\left (\sum_{i=0}^{r-1} y^i x^{s(r-1-i)/r}\right )Q_1(x,y)
\end{equation}
Define 
\begin{equation}\label{eq:prop_9.5_def_Q}
    Q(u,v)= \left (\sum_{i=0}^{r-1} v^i u^{s(r-1-i)}\right )Q_1(u^r,v).
\end{equation}
Then we get \eqref{eq:prop_9.5} with $\lambda=1$.

On the other hand, if $\phi(x,y)$ can be written as \eqref{eq:prop_9.5} with $\lambda=1$ for some polynomial $Q$, then $v-u^s$ divides $\phi(u^r,v)$. Using \eqref{eqn_factorisation_intro}, we have
$$
\phi(x,y)=x^{\nu_1}y^{\nu_2}\prod_{j=1}^{M} (x^s - \lambda_j y^r)^{n_j}P(x^s,y^r),
$$
for some $P$ that vanishes only at the origin. Therefore, 
$$
\phi(u^r,v) = u^{\nu_1 r}v^{\nu_2}\prod_{j=1}^{M} (u^{sr} - \lambda_j v^r)^{n_j}P(u^{sr},v^r).
$$
Note that $v-u^s$ is irreducible and vanishes at $(1,1)$, and thus there exists some $1\leq j_0 \leq M$ such that $v-u^s$ divides $u^{sr} - \lambda_{j_0} v^r$. So $u^{sr} - \lambda_{j_0} v^r$ also vanishes at $(1,1)$. This implies that $n_j\geq 1$ and $\lambda_{j_0}=1$. Therefore, $-(x^s - \lambda_{j_0}y^r)=y^r-x^s$ divides $\phi(x,y)$ as required.

For (2), using \eqref{eq:prop_9.5_proof} and \eqref{eq:prop_9.5_def_Q}, we see that $Q$ is mixed-homogeneous and we have the form \eqref{eq:prop_9.5_2}, not knowing whether $c_1=0$. To show that $c_1 \neq 0$, it suffices to show that $v-u^s$ doesn't divide $Q(u,v)$. 

Suppose on the contrary that  $v-u^s$ divides $Q(u,v)$. Since $v-u^s$ vanishes at $(1,1)$ but $\sum_{i=0}^{r-1} v^i u^{s(r-1-i)}$ doesn't, $v-u^s$ divides $Q_1(u^r,v)$. i.e.
$$
Q_1(x,y) = (y-x^{s/r}) Q_2(x^{1/r},y)
$$
for some polynomial $Q_2$. Using Part (1), this implies that $y^r-x^s$ divides $Q_2$ and hence $(y^r-x^s)^2$ divides $\phi$. Thus a contradiction arises. 

\end{proof}

\subsection{$\Gd$-flatness} 
Decoupling inequalities for a smooth function $\phi$ are often formulated so that the $\Gd$-neighbourhood of the graph of $\phi$ is partitioned into almost rectangles. Here we make this notion precise.
\begin{defn}
Let $\phi:\R^2\to \R$ be smooth. We say $\phi$ is $\Gd$-flat over a set $Q\sub \R^2$, or alternatively $Q$ is $(\phi,\Gd)$-flat, if there is a constant $C$ depending on $\phi$ only, such that
\begin{equation*}
    \sup_{u,v\in Q} |\phi(v)-\phi(u)-\nabla \phi(u)(v-u)|\leq C \Gd.
\end{equation*}
\end{defn}

The following property shows that $\Gd$-flatness acts well with linear transformations, and is implicitly used throughout the argument in this article.
\begin{prop}
Let $\phi:\R^n\to \R$ be a smooth function and $T:\R^n\to \R^n$ be a linear transformation. Then a set $Q\sub \R^n$ is $(\phi,\Gd)$-flat if and only if $T(Q)$ is $(\phi \circ T^{-1},\Gd)$-flat.
\end{prop}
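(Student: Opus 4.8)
The plan is to notice that the quantity governing $\Gd$-flatness, namely the first-order Taylor remainder
\[
E_\phi(u,v):=\phi(v)-\phi(u)-\nabla\phi(u)(v-u),
\]
transforms covariantly under linear (indeed affine) changes of variables, so that the two supremum conditions are not merely comparable but literally equal, with the same constant $C$. Thus both implications will follow from a single identity.

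Concretely, write $\psi:=\phi\circ T^{-1}$, fix $u,v\in Q$, and set $u':=Tu$ and $v':=Tv$. Then $\psi(u')=\phi(u)$ and $\psi(v')=\phi(v)$ directly from the definition, while the chain rule gives $\nabla\psi(x)=\nabla\phi(T^{-1}x)\,T^{-1}$ (viewing $\nabla\psi$ as a row vector), so that
\[
\nabla\psi(u')(v'-u')=\nabla\phi(u)\,T^{-1}(v'-u')=\nabla\phi(u)(v-u),
\]
where we used $v'-u'=T(v-u)$ together with the linearity of $T^{-1}$. Subtracting these identities yields $E_\psi(u',v')=E_\phi(u,v)$ for every pair $u,v\in Q$.

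Finally, since $T$ is a linear bijection of $\R^n$, the map $(u,v)\mapsto(Tu,Tv)$ is a bijection of $Q\times Q$ onto $T(Q)\times T(Q)$, and therefore
\[
\sup_{u',v'\in T(Q)}\bigl|E_\psi(u',v')\bigr|=\sup_{u,v\in Q}\bigl|E_\phi(u,v)\bigr|.
\]
Hence the left-hand side is $\le C\Gd$ if and only if the right-hand side is, with the same constant $C$, which establishes both implications at once. There is no genuine obstacle here; the only point that requires a little care is the placement of $T$ versus $T^{-1}$ in the chain rule, and once this bookkeeping is arranged as above the implicit constant in the definition of $\Gd$-flatness is preserved exactly, so the equivalence holds verbatim rather than merely up to constants.
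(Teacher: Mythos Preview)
Your proof is correct and takes essentially the same approach as the paper: both arguments establish the pointwise identity $E_{\phi\circ T^{-1}}(Tu,Tv)=E_\phi(u,v)$ via the chain rule and then take suprema. The only differences are cosmetic---the paper isolates the chain-rule computation as a separate lemma and works with column vectors and transposes, whereas you use row-vector notation and fold the computation inline.
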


\begin{proof}
We prove a lemma first:
\begin{lem}
Let $\phi:\R^n\to \R$ be a smooth function and $T:\R^n\to \R^n$ be a linear transformation. Then for any $u\in \R^n$, we have
$$
\nabla (\phi\circ T)(u)=T^t \nabla \phi(Tu).
$$
\end{lem}
\begin{proof}[Proof of lemma]
Treat $T$ as a matrix. Write $(Tu)_i=\sum_j T_{ij}u_j$. Then 
$$
\phi\circ T(u)=\phi\left(\sum_j T_{1j}u_j,\cdots \sum_j T_{nj}u_j\right),
$$
and thus for each $k$, we have
$$
\partial_k (\phi\circ T)(u)=\sum_i T_{ik}\partial_i \phi(Tu).
$$
On the other hand, the $k$-th coordinate of $T^t \nabla \phi(Tu)$ is given by
$$
\sum_{j}(T^t)_{kj}(\nabla \phi(Tu))_j=\sum_j T_{jk}\partial_j \phi(Tu),
$$
the same as the left hand side.
\end{proof}
Now with the lemma, let $u,v\in T(Q)$ and write $u=Ta,v=Tb$ with $a,b\in Q$. Then
\begin{align*}
    &|\phi\circ T^{-1}(v)-\phi\circ T^{-1}(u)-\nabla (\phi\circ T^{-1})(u)\cdot (v-u)|\\
    &=|\phi(b)-\phi(a)-(T^{-1})^t \nabla \phi(T^{-1}u)\cdot(T(b-a))|\\
    &=|\phi(b)-\phi(a)-T^t (T^{-1})^t \nabla \phi(T^{-1}u)\cdot (b-a)|\\
    &=|\phi(b)-\phi(a)- \nabla \phi(T^{-1}u)\cdot (b-a)|.
\end{align*}
Therefore, $Q$ is $(\phi,\Gd)$-flat if and only if $T(Q)$ is $(\phi\circ T^{-1},\Gd)$-flat.

\end{proof}

{\bf Data Availability Statement:}

Data sharing not applicable to this article as no datasets were generated or analysed during the current study.

\bibliographystyle{empty}
\bibliography{sample}

\end{document}